\newcommand\dela[1]{}
\newcommand{\be}{\begin{eqnarray}}
\newcommand{\ee}{\end{eqnarray}}
\newcommand{\ce}{\begin{eqnarray*}}
\newcommand{\de}{\end{eqnarray*}}
\newtheorem{thm}{Theorem}[section]
\newtheorem{cor}[thm]{Corollary}
\newtheorem{lem}[thm]{Lemma}
\newtheorem{prp}[thm]{Proposition}
\newtheorem{exa}[thm]{Example}
\theoremstyle{definition}
\newtheorem{defn}{Definition}[section]
\definecolor{wco}{rgb}{0.5,0.2,0.3}
\numberwithin{equation}{section}
\newtheorem{rem}[thm]{Remark}
\def\R{\mathbb R}   
\def\N{\mathbb N}  
\def\<{\langle} \def\>{\rangle}  
\def\d{\text{\rm{d}}}   
\def\E{\mathbb E}  
\def\beg{\begin} \def\beq{\begin{equation}}
 \def\P{\mathbb P} 
 \def\ee{\varepsilon}
\def\[{{\Big[}}
\def\]{{\Big]}}
\def\({{\Big(}}
\def\){{\Big)}}
\begin{document}

\title{{\bf Strong solutions for SPDE  with locally monotone coefficients driven by  L\'{e}vy noise}
\footnote{Supported in part by NSFC (No.11201234), NSF of Jiangsu Higher
Education Institutions (No.12KJB110014), the
PAPD of Jiangsu Higher Education Institutions, the DFG through SFB-701 and IGK 1132.} }

\author{ {\bf Zdzis\l{}aw Brze\'{z}niak~$^a$,  Wei Liu $^{b,c}$\footnote{Corresponding author: weiliu@math.uni-bielefeld.de} ,
 Jiahui Zhu $^{d}$
}\\
 \footnotesize{$a.$ Department of Mathematics, University of York, YO105DD  York, UK}\\
 \footnotesize{  $b.$ School of Mathematical Sciences, Jiangsu Normal University, 221116 Xuzhou, China}\\
  \footnotesize{  $c.$  Fakult\"at f\"ur Mathematik, Universit\"at Bielefeld,
D-33501 Bielefeld, Germany}\\
\footnotesize{  $d.$ School of Finance, Zhejiang University of Finance and Economics, 310018 Hangzhou, China}\\
}

\date{ }
\maketitle

\begin{abstract} Motivated by applications to a manifold of semilinear and quasilinear
 stochastic partial differential equations (SPDEs) we  establish  the existence and uniqueness of
strong solutions to coercive and  locally monotone SPDEs driven by
L\'{e}vy processes. We illustrate  the  main result of our paper by
showing how it can be applied  to various types of SPDEs such as
stochastic reaction-diffusion equations, stochastic Burgers type equations, stochastic 2D
hydrodynamical systems and stochastic equations of non-Newtonian fluids, which generalize many existing results in the literature.
\end{abstract}

\noindent
 AMS Subject Classification:\ 60H15, 37L30, 34D45 \\
\noindent
 Keywords: Stochastic partial differential equation; L\'{e}vy process;  local  monotonicity;
  Navier-Stokes equations; non-Newtonian fluid

\bigbreak

\section{Introduction and Main Results}

In recent years,  Stochastic Partial Differential Equations (SPDEs)
driven by jump type noises such as L\'{e}vy-type or Poisson-type
perturbations  have become extremely popular for modeling financial,
physical and biological phenomena. In some circumstances, purely
Brownian motion perturbation has many imperfections while capturing
some large moves and unpredictable events. L\'{e}vy-type
perturbations come to the stage to reproduce the performance of
those natural phenomena in some real world models.
  The existence and uniqueness of solutions for SPDEs driven by jump type noises
   has  already  been intensively investigated  by many authors,
see e.g. Kallianpur and Xiong \cite{[Kal+X]},   Albeverio et al
\cite{AWZ},  Mueller et al \cite{M98,mueller}, Applebaum and Wu
\cite{Ap+Wu}, Mytnik \cite{Mytnik_2002}, Truman and Wu
\cite{Tru+Wu},  Hausenblas \cite{Hau-1,Hau-2},  Mandrekar and
R\"{u}diger \cite{MR06},
 R\"{o}ckner and Zhang \cite{RZ07},
  Dong et al \cite{DX1,Dong_2009,DXZ}, Marinelli and R\"{o}ckner \cite{MR10},  Bo et al \cite{BSW},
 Brze\'{z}niak et al \cite{Brz+Haus_2009,Brz+Hau+Zhu_2011,Brz+Zhu_2010} and
 the recent monograph by Peszat and Zabczyk \cite{PZ}.  The last reference can also be used  for more detailed expositions and references.

In this paper, we aim to establish a framework in which one can
treat a large number of  SPDEs driven by L\'{e}vy type noises
including  stochastic reaction-diffusion equations, stochastic Burgers type
equations, stochastic 2D Navier-Stokes equations and stochastic equations of non-Newtonian fluids etc.
 The line of investigation proposed in
this paper   began with the celebrated  works by
 Pardoux \cite{Par75} and   Krylov and Rozovskii \cite{KR79},  and later it was further  developed  by many authors,
  see e.g. Gy\"{o}ngy and  Krylov  \cite{[G+K]},  Gy\"{o}ngy \cite{G}.
Ren et al  \cite{RRW}, R\"{o}ckner and Wang \cite{RW}  and Zhang
\cite{Zh08}. Roughly speaking, for stochastic equations in finite
dimensional spaces, the existence and uniqueness result was obtained
under the local monotonicity assumption for the coefficients, see
\cite{KR79} for SDEs driven by Brownian motion and \cite{[G+K]} for
SDEs driven by (possibly discontinuous) locally square integrable
martingales. However, concerning the existence and uniqueness of
strong solutions to SPDEs in infinite dimensional spaces
 driven by Wiener processes or local martingales, all results were  established for the \textbf{globally} monotone coefficients SPDE
 (cf. \cite{KR79,G,RRW,Zh08}).

Recently, the classical variational  framework has been extended by
the second named author  and R\"{o}ckner in \cite{Liu+Roc}  for SPDE
driven by Wiener process in Hilbert space with  locally monotone
coefficients.
In \cite{Liu+Roc}  the authors showed that the local
monotonicity method first used by Menaldi and Sritharan \cite{MS02}
for  stochastic 2D Navier-Stokes equations (and later used by
Sritharan and Sundar \cite{Srith+S_2006}, Chueshov and Millet
\cite{CM10} for various stochastic equations of hydrodynamics) can
be generalized to such an extent that the extended variational
framework is applicable to all the equations investigated in
\cite{KR79,PR07,MS02,Srith+S_2006,CM10}.

On the other hand, there are not many papers studying non-Lipschitz
SPDEs driven by L\'{e}vy type noises with small jumps. The first and
third named author proved in
 \cite{Brz+Zab_2009}    the existence and uniqueness of solutions to stochastic nonlinear beam equations driven by L\'{e}vy type noises.
 They together with Hausenblas extended in \cite{Brz+Hau+Zhu_2011} (see also \cite{Dong_2009})
the work of Menaldi and Sritharan by showing that their method
yields the existence and uniqueness of solutions to stochastic 2D
Navier Stokes equations driven by a L\'{e}vy type noise. There is
also the work of the first named author and Hausenblas
\cite{Brz+Haus_2009} in which  by means of generalized compactness
method   the existence of solutions to stochastic reaction diffusion
equations driven by a L\'{e}vy type noise was investigated.
 What we do in the present paper is to confirm
the natural conjecture that the framework in \cite{Liu+Roc} works
not only for locally monotone SPDEs driven by  multiplicative
Gaussian noise but also by multiplicative L\'{e}vy type noise (see
Remark 1.3). However, we should point out that our results are not
applicable to evolution equations with general space time white
noise, see for instance \cite{Brz+Zab_2009,Brz+Haus_2009} and the
references therein. The reason is  that the solutions of SPDEs with
general
 space time white noise are not regular enough to fit in the variational framework.

The main contribution of this work is that we establish a unified
framework for a large class of semilinear and quasilinear SPDE
driven by  general L\'{e}vy noises, which generalizes many previous
works  \cite{Par75,KR79,G,Liu+Roc}. The main result is  applicable
to various types of concrete examples such as stochastic 2D
Navier-Stokes equations, stochastic magneto-hydrodynamic equations,
the Boussinesq model for the B\'{e}nard convection, 2D magnetic
B\'{e}nard problem, stochastic 3D Leray-$\alpha$ model (cf. Remark \ref{rem4.1}) and stochastic
equations of non-Newtonian fluids  (see
Section 4 for details). Hence it also recovers and improves many
known results in the literature, see for instance
\cite{DX1,MS02,RZ07,Dong_2009,CM10,Deugoue+Sango_2010,Brz+Hau+Zhu_2011}.
 In a recent work \cite{Brz+Zhu_2010} by the first and third named author, a type of stochastic nonlinear beam equations with Poisson-type noises was
studied and the existence and uniqueness of  solutions  was
established by following a natural route of constructing a local
mild solution and proving, with the   help of the Khasminski test,
that this solution is a global one. In contrast to
\cite{Brz+Zhu_2010},  the approach used in this paper is different.
We will follow the lines in \cite{Brz+Hau+Zhu_2011,Liu+Roc} and  the
technique involves the use of the Galerkin approximation,  local
monotonicity arguments but not, as opposed to \cite{Brz+Haus_2009},
compactness argument.
 We shall use the result from \cite{[G+K],ABW}
 for the finite dimensional case to construct a sequence of  solutions of approximated equations and obtain a prior estimates for those approximated solutions.
Then we show that the limit of those approximated solutions solves
the original equation by using the local monotonicity arguments.

~~

Now let us describe the framework in more detail.  Let
$$V\subset H\equiv H^*\subset V^*$$
 be a Gelfand triple, $i.e.$  $(H, \<\cdot,\cdot\>_H)$ is a separable
Hilbert space which is  identified with its dual space by the Riesz Lemma, $V$ is a reflexive  Banach space  that  is
 continuously and densely embedded into $H$. If  ${ }_{V^*}\<\cdot,\cdot\>_V$ denotes  the duality
between  $V$ and its dual space $V^*$,  then we have
$$ { }_{V^*}\<u, v\>_V=\<u, v\>_H, \  u\in H ,v\in V.$$
Let $(\Omega,\mathbb{P},\mathbb{F}, \mathcal{F})$, where $\mathbb{F}=(\mathcal{F}_t)_{t\geq0}$,   is a filtered probability space,
  $(Z,\mathcal{Z})$ be a measurable space, and $\nu$ be a $\sigma$-finite measure on it. We write
  $$\tilde{N}((0,t]\times B)=N((0,t]\times B)-t\nu(B),\ t\geq0,\ B\in\mathcal{Z} $$
  for the compensated Poisson random measure on $[0,T]\times\Omega\times Z$ associated with a stationary Poisson point process $p$ (see Section 2 for more details).
A typical example of  $N$  is  a Poisson random measure associated with a L\'{e}vy process
taking values in a separable Banach space.
Let $U$ be a separable Hilbert space and let  us denote by  $\left(\mathcal{T}_2(U;H),
 \|\cdot\|_2\right) $
the Hilbert space of all Hilbert-Schmidt operators from $U$ to $H$. Assume that    $\{W_t\}_{t\geq0}$ is a $U$-valued  cylindrical Wiener process
on the probability space $(\Omega,\mathbb{P},\mathbb{F}, \mathcal{F})$.
We use the symbol $\mathcal{P}$ to denote the predictable $\sigma$-field, i.e. the $\sigma$-field generated by all left continuous and $\mathbb{F}$-adapted
 real-valued processes on $[0,T]\times\Omega$.
We shall denote by $\mathcal{B}\mathcal{F}$ the $\sigma$-field of the progressively measurable sets on $[0,T]\times\Omega$, i.e.
      $$\mathcal{B}\mathcal{F}= \{ A\subset[0,T]\times\Omega:\forall\ t\in[0,T], A\cap([0,t]\times\Omega)\in\mathcal{B}([0,t])\otimes\mathcal{F}_t\}.$$
Now  we
consider a type of SPDEs driven by L\'{e}vy processes of the following form:
 \begin{align}
 \begin{split}\label{SEE}
\d X_t&=A(t,X_t) \d t+B(t,X_t) \d W_t\\
&~~~~+\int_{D^c}f(t,X_{t-},z)\tilde{N}(\d t,\d z)+\int_Dg(t,X_{t-},z)N(\d t, \d z),\\
 X_0&=x,
 \end{split}
\end{align}
 where $x$ is an $\mathcal{F}_0$-measurable random variable, $A: [0,T]\times \Omega\times V\to V^* $ and $B:
[0,T]\times  \Omega\times V\to \mathcal{T}_2(U;H)$ are both $\mathcal{B}\mathcal{F}\otimes\mathcal{B}(V)$-measurable functions,
$D\in\mathcal{Z}$ with $\E N((0,t]\times D)<\infty$  for every $0<t\leq T$,   and  $f,g:[0,T]\times\Omega\times V\times Z\to H
$ are $\mathcal{P}\otimes\mathcal{B}(V)\otimes\mathcal{Z}$-measurable functions.

The main aim of this work is to establish the existence and uniqueness of strong solutions to (\ref{SEE}) under the coercivity and  local monotonicity conditions.

For this purpose, let us first formulate the main  assumptions on the coefficients.

~~

Suppose that there exist constants
  $\alpha>1$, $\beta\ge 0$, $\theta>0$, $C>0$,  a positive $\mathbb{F}$-adapted process $F$ and  a measurable,  bounded on balls function $\rho: V\rightarrow [0,+\infty)$
 such that the
 following
 conditions hold for all $v,v_1,v_2\in V$ and $(t,\omega)\in [0,T]\times \Omega$:
\begin{enumerate}
    \item [$(H1)$] (Hemicontinuity) The map
     $ s\mapsto { }_{V^*}\<A(t,v_1+s
 v_2),v\>_V$
  is  continuous on $\mathbb{R}$.

    \item [$(H2)$] (Local monotonicity)
\begin{align*}
& 2 { }_{V^*}\<A(t,v_1)-A(t,v_2), v_1-v_2\>_V
    +\|B(t,v_1)-B(t,v_2)\|_{2}^2\\
  + & \int_{D^c}\|f(t,v_1,z)-f(t,v_2,z)\|^2_H\nu(\d z)
\le \left(C + \rho(v_2) \right)\|v_1-v_2\|_H^2,
     \end{align*}
\item [$(H3)$] (Coercivity)
    \begin{align*}
     2 { }_{V^*}\<A(t,v), v\>_V +\|B(t,v)\|_{2}^2
    +\theta
    \|v\|_V^{\alpha} \le F_t + C\|v\|_H^2.
    \end{align*}

\item[$(H4)$] (Growth)
$$ \|A(t,v)\|_{V^*}^{\frac{\alpha}{\alpha-1}} \le \left(F_t +
 C\|v\|_V^{\alpha}\right) \left( 1 +\|v\|_H^{\beta} \right).$$
    \end{enumerate}

\beg{defn} (Solution of SEE) An  $H$-valued  c\`{a}dl\`{a}g
$\mathbb{F}$-adapted process $\{X_t\}_{t\in [0,T]}$ is called a
solution of $(\ref{SEE})$, if for its $\d t\times \P$-equivalent
class $\bar{X}$ we have
\begin{enumerate}
\item[(1)] $\bar{X}\in L^\alpha([0,T];  V)\cap L^2([0,T];  H)$,  $\P$-a.s.;
\item[(2)] the following equality holds $\P$-a.s.:
\begin{align*}
X_t=x&+\int_0^t A(s, \bar{X}_s)\d s+\int_0^t B(s, \bar{X}_s)\d W_s\\
&+\int_0^t\int_{D^c}f(s,\bar{X}_{s-},z)\tilde{N}(\d s,\d z)
+\int_0^t\int_{D}g(s,\bar{X}_{s-},z)N(\d s,\d z),\ t\in[0,T].
\end{align*}
\end{enumerate}
\end{defn}
\begin{rem}
The integrability of all terms in the  above equality  are implicitly required in the definition  and it will be all  justified in the  proof of existence of solutions.
Note that $A(s, \bar{X}_s)$ is a $V^*$-valued process according to the definition, however,
 the  integral with respect to $\d s$ in the above equality is initially a $V^*$-valued Bochner integral which turns out to be in fact $H$-valued.
\end{rem}

Now we can  present the main result of this  paper.

\beg{thm}\label{T1}
 Suppose that conditions $(H1)-(H4)$ hold for  $F\in L^{\frac{\beta+2}{2}}([0,T]\times \Omega; \d
    t\times \mathbb{P})$, and
 there exists  constant  $\gamma<\frac{\theta}{2\beta}$   such that for all $t\in[0,T], \omega\in \Omega$  and  $v\in V$ we have
\begin{align}
& \|B(t,v)\|_2^2  + \int_{D^c}\|f(t,v,z)\|_H^2\nu(\d z) \le   F_t+C\|v\|_H^2+ \gamma \|v\|^{\alpha}_V; \label{c3}\\
& \int_{D^c} \|f(t,v,z)\|^{\beta+2}_H\nu(\d z)\leq   F_t^{\frac{\beta+2}{2}}+ C\|v\|^{\beta+2}_H ;  \label{c4}\\
 & \rho(v) \le C(1+\|v\|_V^\alpha) (1+\|v\|_H^\beta).\label{c5}
\end{align}

\noindent  (i) Then  for any  $x \in L^{\beta+2}(\Omega, \mathcal{F}_0,\mathbb{P};H)$,
   Equation $(\ref{SEE})$
    has a unique solution $\{X_t\}_{t\in [0,T]} $.

\noindent (ii) If  $g\equiv 0$, then there exists a constant $C$ such that
\begin{equation}\label{e0}
  \sup_{t\in[0,T]} \E \|X_t\|_H^{\beta+2}  + \E \int_0^T \|X_t\|_H^\beta \|X_t\|_V^\alpha \d t
 \le C\left(\E\|x \|_H^{\beta+2}+ \E\int_0^T F_t^{(\beta+2)/2}\d t\right).
\end{equation}

\noindent (iii) If $g\equiv 0$ and $\gamma$ is small enough, then we have
\begin{equation}\label{e1}
 \E\left(\sup_{t\in[0,T]}\|X_t\|_H^{\beta+2}\right)  + \E \int_0^T \|X_t\|_H^\beta \|X_t\|_V^\alpha \d t
    \le C\left(\E\|x \|_H^{\beta+2}+ \E\int_0^T F_t^{(\beta+2)/2}\d t\right).
\end{equation}

\end{thm}
\begin{rem} (1) If $f=g\equiv0$ in  (\ref{SEE}) (i.e.   Wiener noise case),  then Theorem \ref{T1} recovers
the main result in \cite{Liu+Roc}. Moreover, we improve
  \cite[Theorem 1.1]{Liu+Roc}   for allowing a  positive constant $\gamma$ in (\ref{c3}), which means that the diffusion coefficient $B$
can also depend on some gradient term of the solution in
applications.
 We also  want to emphasize that
 $(H2)$ is essentially weaker than the classical  monotonicity condition used extensively  in the literature  (i.e. $\rho\equiv0$, see e.g. \cite{Par75,KR79,PR07,RZ07,RRW,GLR}).
The  typical examples are  the stochastic Burgers equations and
2D   Navier-Stokes equation (see Remark \ref{rem4.1} for many other examples) on
a bounded or unbounded domain, which satisfies $(H2)$ but does not
satisfy the standard monotonicity condition (cf. Section 4  for the details).

(2) If $g\equiv0$ in (\ref{SEE}), $\rho\equiv 0, \alpha=2, \beta=0$ in $(H2)$-$(H4)$,  then the existence and uniqueness of strong solutions to (\ref{SEE}).
follows from the general result of Gy\"{o}ngy \cite{G}.

(3) If the noise is zero or additive type  in (\ref{SEE}), then the (local)
existence and uniqueness of solutions  is  established in
\cite{L,LR13} by replacing $(H2)$ with the following more general
local monotonicity condition:
$$ { }_{V^*}\<A(t,v_1)-A(t,v_2), v_1-v_2\>_V
     \le \left(K +\eta(v_1)+ \rho(v_2) \right)\|v_1-v_2\|_H^2,$$
where $\eta,\rho: V\rightarrow [0,+\infty)$ are measurable functions
and locally bounded in $V$.

(4) In general, the estimates (\ref{e0}) and (\ref{e1}) might not
hold anymore if we have  large jumps term in the  equation. However,
if we assume that the L\'{e}vy measure has finite moment of certain
order (see e.g.\cite{DXZ}), then it is still possible to obtain some
similar estimates. This subject and some related applications will be investigated in future works.
\end{rem}

\begin{rem}
(1) Note that if $\beta=0$ in $(H4)$, then one can just take any  $\gamma<\infty$ in \eqref{c3}.
In this case, the assumption on $B$ in  (\ref{c3}) can be removed since it  follows directly from $(H3)$ and $(H4)$ (cf. \cite[Remark 4.1.1]{PR07}).

(2)  If $f$ satisfies  the following   growth condition for some fixed $p\ge \beta+2$:
    \begin{align*}
            &\|f(t,v,z)\|^p_H\leq h(z)^p(F_t^{\frac{p}{2}}+C\|v\|^p_H ),\
\  (t,v,z)\in [0,T] \times V\times D^c,
    \end{align*}
where $\int_{D^c} \left[h(z)^{\beta+2} + h(z)^2\right] \nu(dz) <\infty$,
 then it is easy to show that  conditions \eqref{c4}  and (\ref{c3})  hold.

In particular,  if $f$ satisfying the following conditions:
\begin{align*}
 &  \|f(t,x,z)-f(t, y,z)\|_H \le C \|x-y\|_H \|z\|, \ t\in[0,T], \  x,y\in V, \ z\in D^c;       \\
  &  \|f(t,x,z)\|_H  \le C (1+\|x\|_H) \|z\|, \   t\in[0,T],\  x\in V, \ z\in D^c,
\end{align*}
where $\int_{D^c}\|z\|^2\nu(\d z)<\infty$, then $(H2)$, \eqref{c3}
and (\ref{c4}) are all  fulfilled.
\end{rem}

The rest of the paper is organized as follows:  in the next section we will recall some
preliminaries on the Poisson random measure and its corresponding stochastic integral.
 The proof of the main result  will be given in Section 3 and some concrete examples of SPDE will
be studied in  Section 4 as  applications. Note that we always use $C$ to denote a generic constant
which may change from line to line.

\section{Some Preliminaries on Poisson  Random Measure}

We begin with a brief review of  terminology and results on Poisson random measures.
Let $(S,\mathcal{S})$ be a measurable space, $\mathbb{N}=\{0,1,2,\cdots\}$ and $\bar{\mathbb{N}}=\mathbb{N}\cup\{\infty\}$.
Let $\mathbb{M}_{\bar{\mathbb{N}}}(S)$ denote the space of all $\bar{\mathbb{N}}$-valued measures on $(S,\mathcal{S})$.
 We use the symbol $\mathcal{B}(\mathbb{M}_{\bar{\mathbb{N}}}(S))$ to denote the smallest
$\sigma$-field on $\mathbb{M}_{\bar{\mathbb{N}}}(S)$ with respect to which all  mappings
$i_B:\mathbb{M}_{\bar{\mathbb{N}}}(S)\ni\mu\mapsto\mu(B)\in\bar{\mathbb{N}}$, $B\in\mathcal{S}$ are measurable.

\begin{defn} A map $N:\Omega\times \mathcal{S}\rightarrow \bar{\mathbb{N}}$ is called an $\bar{\mathbb{N}}$-valued
\textbf{random measure} if for each $\omega\in\Omega$, $N(\omega,\cdot)\in \mathbb{M}_{\bar{\mathbb{N}}}(S)$  and
for each $A\in\mathcal{S}$, $N(\cdot,A)$ is an $\bar{\N}$-valued random variable on the probability
space $(\Omega,\mathbb{P},\mathcal{F})$. We will often write $N(A)$ instead of $N(\cdot,A)$ for simplicity of notation.
\end{defn}

\begin{defn}\label{De-Poisson}
  An $\bar{\mathbb{N}}$-valued random measure $N$  is called a \textbf{Poisson random measure} if
  \begin{enumerate}
  \item[(1)]for any $B\in\mathcal{S}$ satisfying $\mathbb{E}[ N(B)]<\infty$, $N(B)$ is a Poisson random variable  with  parameter $\eta(B)=\E [N(B) ]$;
  \item[(2)]for any pairwise disjoint sets $B_1,\cdots,B_n\in\mathcal{S}$, the random variables
         $$N(B_1),\ \cdots,\ N(B_n)$$
         are independent.
  \end{enumerate}
\end{defn}

Let $(Z,\mathcal{Z})$ be a measurable space. A \textbf{point function} $\alpha$ on $(Z,\mathcal{Z})$ is a
mapping $\alpha:\mathcal{D}(\alpha)\rightarrow Z$, where the domain
$\mathcal{D}(\alpha)$ of $\alpha$ is a countable subset of $(0,\infty)$. Let $\Pi_{Z}$ be the set of all point
 functions on $Z$.   For each point function, we define a counting measure $N_{\alpha}$ by
 \begin{align*}
          N_{\alpha}(U):=\sharp\{s\in  \mathcal{D}(\alpha) :\ (s,\alpha(s))\in U\},\ \ U\in\mathcal{B}((0,\infty))\otimes\mathcal{Z} .
 \end{align*}
 Denote by $\mathcal{Q}$ the $\sigma$-field on $\Pi_{Z}$
 generated by all the subsets $\{\alpha\in\Pi_Z:\ N_{\alpha}(U)=k\}$, $U\in \mathcal{Z}$, $k=0,1,2,\cdots$.
 A function $p:\Omega\rightarrow\Pi_{Z}$ is called a \textbf{point process} on $Z$ if it is $\mathcal{F}/\mathcal{Q}$-measurable.
Let $p$ be a point process on  $Z$. We can define  the counting measure $N_p$ associated with $p$ by
  \begin{align}\label{eq-N-0}
     N_{p}(U,\omega):=\sharp\{s\in\mathcal{D}(p(\omega)):(s,p(s,\omega))\in U\},\ \ U\in\mathcal{B}((0,\infty))\otimes\mathcal{Z}, \  \omega\in\Omega.
 \end{align}
 In particular, we have
 \begin{align}\label{eq-N-1}
     N_{p}((0,t]\times A,\omega)=\sharp\{s\in(0,t]\cap\mathcal{D}(p(\omega)):p(s,\omega)\in A\},\ \ A\in\mathcal{Z},\ \  0<t\leq T.
 \end{align}

It is also useful to introduce the shifted point process $\theta_tp, \ t\ge 0$ defined by
 \begin{align*}
 (\theta_tp)(s)&=p(s+t),\ s>0;\\
  \mathcal{D}(\theta_t p)&=\{s\in(0,\infty):s+t\in\mathcal{D}(p)\}.
  \end{align*}
and the stopped point process $\alpha_tp$ defined  by
  \begin{align*}
       (\alpha_tp)(s)&=p(s),\ \text{for }s\in\mathcal{D}(\alpha_tp);\\
       \mathcal{D}(\alpha_tp)&=(0,t]\cap\mathcal{D}(p).
  \end{align*}
  \begin{defn}
   A point process $p$ is said to be \textbf{finite} if $\E N_{p}((0,t]\times Z)<\infty$  for every $0<t\leq T$.

 A point process $p$ is said to be \textbf{$\sigma$-finite} if there exists
an increasing sequence $\{D_n\}_{n\in\mathbb{N}}\subset\mathcal{Z}$ such that $\cup_{n}D_n=Z$ and $\E
N_{p}((0,t]\times D_n)<\infty$ for all $0<t \leq T$ and $n\in\mathbb{N}$.

 A point process $p$ is said to be \textbf{stationary} if for every $t>0$, $p$ and $\theta_tp$ have the same probability laws.

  A point process $p$ is said to be \textbf{renewal} if  it is stationary and for every $0<t<\infty$, the point processes $\alpha_tp$ and $\theta_tp$ are independent.

  A point process $p$ is said to be \textbf{adapted} to the filtration $\mathbb{F}$ if for every $t>0$ and $A\in\mathcal{Z}$, its counting measure $N_{p}((0,t]\times A)$ is $\mathcal{F}_t$-measurable.

 A point process $p$ is called a \textbf{Poisson point process} if $N_{p}(\cdot)$ defined by \eqref{eq-N-0} is a Poisson random measure on $((0,\infty)\times Z,\mathcal{B}((0,\infty))\otimes\mathcal{Z})$.
 \end{defn}
 \begin{rem}\label{rem-1}
 It can be shown that if a point process $p$ is $\sigma$-finite and renewal, then $N_{p}$ defined by \eqref{eq-N-0}
is a Poisson random measure (cf. \cite[Theorem 3.1]{[Ito-0]}).  It is easy to verify that a
Poisson point process is stationary if and only if there exists a
nonnegative  measure $\nu$ on $(Z,\mathcal{Z})$ such that
 \begin{align}\label{eq-102}
         \E N_{p}((0,t]\times A)=t\nu(A),\ \ \ t> 0,\ \ A\in\mathcal{Z}.
 \end{align}
 In such a case, we say that the Poisson random measure $N_{p}$ is time homogenous. At this point,
 it should be mentioned that, in the literature,  some authors may use the above property \eqref{eq-102} as an alternative definition
 of stationary property of a Poisson point process. \dela{Actually,} In fact,  this is consistent with our  definition of a stationary point process.
\end{rem}

  Let $\mathcal{M}_T^q(\mathcal{P}\otimes\mathcal{Z},\d t\times\mathbb{P}\times\nu;H)$, $q\in [1,\infty)$,
\dela{(resp. $\mathcal{M}_T^1(\mathcal{P}\otimes\mathcal{Z},\d t\times\mathbb{P}\times\nu;H)$)}
be the space of all  (equivalence classes of) $\mathcal{P}\otimes\mathcal{Z}$-measurable functions
$f:[0,T]\times\Omega\times Z\to H$ such that
\begin{align}\label{eq-m-loc}
     \E \int_0^T\int_{Z} \|f(s,\cdot,z)\|_H^q \nu(\d z)\d s<\infty\dela{\
\left( \text{resp.}\ \E \int_0^T\int_{Z} \|f(s,\cdot,z)\|_H \nu(\d z)\d s<\infty  \right)}.
\end{align}
Let  $\mathcal{M}_T(\mathcal{P}\otimes\mathcal{Z},N;H)$ be the space of all $\mathcal{P}\otimes\mathcal{Z}$-measurable functions $f:[0,T]\times\Omega\times Z\to H$ such that
\begin{align}\label{eq-m-loc-1}
     \E \int_0^T\int_Z \|f(s,\cdot,z)\|_H N(\d s,\d z)<\infty.
\end{align}
Here  $\int_0^T\int_Z\|f(s,\omega,z)\|_HN(\d s, \d z)(\omega)$ is understood to be the Lebesgue integral $w.r.t.$
the measure $N(\cdot,\cdot)(\omega)$ for every $\omega\in\Omega$ and  is equal to the convergent sum (cf. \cite{[Ikeda]}),
\begin{align*}
\int_0^T\int_Z \|f(s,\omega, z)\|_H N(\d s, \d z)(\omega) =
\sum_{s\in(0,T]\cap\mathcal{D}(p(\omega))}\|f(s,\omega,
p(s,\omega))\|_H.
\end{align*}
It should come as no surprise that if $f:[0,T]\times\Omega\times Z\rightarrow H$ is a $\mathcal{B}([0,T])\otimes\mathcal{F}_T\otimes\mathcal{Z}$-measurable function
and $\E\int_0^T\int_Z\|f(s,\cdot,z)\|_HN(ds,dz)<\infty,$ then for every $\omega\in\Omega$, $f(\cdot,\omega,\cdot)$ is $\mathcal{B}([0,T])\otimes\mathcal{Z}$-measurable
and $\int_0^T\int_Z\|f(s,\omega,z)\|_HN(\d s,\d z)(\omega)<\infty$, $\mathbb{P}$-a.s..  Hence for almost all $\omega\in\Omega$, $f(\cdot,\omega,\cdot)$ is Bochner integrable with respect to $N(\d s,\d z)(\omega)$ and we have for every $t\leq T$
\begin{align}\label{sec-2-eq-30}
    \int_0^t\int_Z f(s,\omega,z) N(\d s,\d z)(\omega)=\sum_{s\in(0,t]\cap\mathcal{D}(p(\omega))}f(s,\omega,p(s,\omega)),\ \mathbb{P}\text{-a.s.}
\end{align}

Now  we state  some important properties of the stochastic integrals w.r.t. the compensated Poisson random
measures, where the proofs of these properties and more detailed discussions can be found in
 \cite{[Ikeda]} (see also \cite{[Brz-Hau], [Rud-int],[Zhu]}).

\begin{prp}\label{P2.1}
 Assume $f\in \mathcal{M}_T^2(\mathcal{P}\otimes\mathcal{Z},\d t\times\mathbb{P}\times\nu;H)$. Then the following conclusions hold:
\begin{enumerate}
\item[(i)]
The stochastic integral process $\int_0^t\int_Z f(s,\cdot,z)\tilde{N}(\d s,\d z)$, $t\in[0,T]$ is a
c\`{a}dl\`{a}g $2$-integrable martingale. More precisely, it has a modification which has c\`{a}dl\`{a}g trajectories.

\item[(ii)] The  isometry property:
\begin{align}\label{eq-iso}
\mathbb{E}\big{\|}\int_0^t\int_Z f(s,\cdot,z)\tilde{N}(\d s, \d z)\big{\|}_H^2
=\mathbb{E}\int_0^t\int_Z\|f(s,\cdot,z)\|_H^2\nu(\d z)\d s,\ \ t\in(0,T].
\end{align}

\item[(iii)] If $D\in\mathcal{Z}$ with $\mathbb{E}(N((0,t]\times  D))<\infty$, then for every
$t\in[0,T]$, $\mathbb{P}\text{-a.s.}$,
\begin{align}\label{prop_eq_1}
     \int_0^t\int_Df(s,\cdot,z)\tilde{N}(\d s,\d z)
          =\sum_{s\in(0,t]\cap
     \mathcal{D}(p)}f(s,\cdot,p(s))1_{D}(p(s))-\int_0^t\int_Df(s,\cdot,z)\nu(\d z)\d s ;
\end{align}

\item[(iv)] If in addition
$f\in
 \mathcal{M}_T^1(\mathcal{P}\otimes\mathcal{Z},\d t\times\mathbb{P}\times\nu;H)$, then for each $t\in[0,T]$, $\mathbb{P}$-a.s.
\begin{align}\label{eq-108}
     \int_0^t\int_Zf(s,\cdot,z)\tilde{N}(\d s,\d z)
    = \sum_{s\in(0,t]\cap
     \mathcal{D}(p)}f(s,\cdot,p(s))-\int_0^t\int_Zf(s,\cdot,z)\nu(\d z)\d s.
\end{align}
\end{enumerate}
\end{prp}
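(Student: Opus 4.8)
The plan is to carry out the classical It\^o--Watanabe construction of the stochastic integral against $\tilde N$: build it on a dense class of elementary integrands, establish the isometry and martingale property there, and extend by continuity. First I would fix the class $\mathcal{S}$ of \emph{simple} predictable integrands, i.e.\ finite sums
$$ f(s,\omega,z)=\sum_{k=1}^{m}\mathbf{1}_{(t_k,t_{k+1}]}(s)\,\xi_k(\omega)\,\mathbf{1}_{B_k}(z), $$
where $0\le t_1<\dots<t_{m+1}\le T$, each $\xi_k$ is a bounded $H$-valued $\mathcal{F}_{t_k}$-measurable random variable, and $B_k\in\mathcal{Z}$ with $\nu(B_k)<\infty$, and then \emph{define}
$$ \int_0^t\!\!\int_Z f\,\tilde N(\d s,\d z):=\sum_{k=1}^m \xi_k\,\tilde N\big((t_k\wedge t,\,t_{k+1}\wedge t]\times B_k\big). $$
Using the covariance structure $\E[\tilde N(C)\tilde N(C')]=(\mathrm{Leb}\times\nu)(C\cap C')$, which follows from \eqref{eq-102} and the Poisson/independence properties in Definition \ref{De-Poisson}, a direct computation gives the isometry \eqref{eq-iso} on $\mathcal{S}$; the martingale property holds because each summand is, by the $\mathcal{F}_{t_k}$-measurability of $\xi_k$ and the independent increments of $\tilde N$, an $H$-valued martingale.

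Next I would extend to arbitrary $f\in\mathcal{M}_T^2$. The key lemma is that $\mathcal{S}$ is dense in $\mathcal{M}_T^2(\mathcal{P}\otimes\mathcal{Z},\d t\times\P\times\nu;H)$, proved by the usual approximation of a predictable $L^2$-integrand, exhausting $Z$ along the $\sigma$-finiteness sequence $\{D_n\}$ and truncating in $H$. Picking $f^n\in\mathcal{S}$ with $f^n\to f$ in $\mathcal{M}_T^2$, the isometry shows that $\int_0^t\!\int_Z f^n\,\tilde N$ is Cauchy in $L^2(\Omega;H)$ for every fixed $t$, and I would \emph{define} the integral as the $L^2$-limit. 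Linearity and the isometry pass to the limit, so \eqref{eq-iso} holds for $f$, and the martingale property survives because $L^2$-limits of martingales are martingales. This already yields statement (ii) and the $2$-integrable martingale part of (i).

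The c\`adl\`ag modification claimed in (i) is the step I expect to be the main obstacle. Each approximant $M^n_\cdot:=\int_0^\cdot\!\int_Z f^n\,\tilde N$ is manifestly c\`adl\`ag, being a finite linear combination of compensated Poisson processes. Applying Doob's maximal inequality to the $H$-valued martingale $M^n-M^m$ gives
$$ \E\sup_{t\le T}\big\|M^n_t-M^m_t\big\|_H^2\le 4\,\E\big\|M^n_T-M^m_T\big\|_H^2 =4\,\E\!\int_0^T\!\!\int_Z\|f^n-f^m\|_H^2\,\nu(\d z)\,\d s\longrightarrow 0. $$
Hence $\{M^n\}$ is Cauchy in the space of c\`adl\`ag $L^2$-martingales under the norm $\big(\E\sup_{t\le T}\|\cdot\|_H^2\big)^{1/2}$. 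Passing to a subsequence along which the supremum converges almost surely, the limit process has c\`adl\`ag trajectories and coincides, for each fixed $t$, $\P$-a.s.\ with the $L^2$-integral defined above; this is the required modification.

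Finally, for the pathwise representations (iii) and (iv) I would use that the relevant jump sets have finite intensity. When $\E N((0,t]\times D)=t\nu(D)<\infty$, the point process $p$ has, $\P$-a.s., only finitely many points in $(0,t]$ with values in $D$; for $f\in\mathcal{S}$ supported in $D$ one verifies directly, via \eqref{eq-N-1} and the pathwise identity \eqref{sec-2-eq-30}, the splitting
$$ \int_0^t\!\!\int_D f\,\tilde N(\d s,\d z)=\sum_{s\in(0,t]\cap\mathcal{D}(p)}f(s,\cdot,p(s))\mathbf{1}_D(p(s))-\int_0^t\!\!\int_D f\,\nu(\d z)\,\d s, $$
in which the jump sum is a finite sum and the compensator is an $H$-valued Bochner integral, finite because $\nu(D)<\infty$ and $f\in\mathcal{M}_T^2$ (Cauchy--Schwarz). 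Approximating a general $f$ by elements of $\mathcal{S}$ supported in $D$ and passing to the limit in each of the three terms yields \eqref{prop_eq_1}. Statement (iv) is the same decomposition taken over all of $Z$: the additional hypothesis $f\in\mathcal{M}_T^1$ is precisely what makes both the total compensator $\int_0^t\!\int_Z f\,\nu(\d z)\,\d s$ and the jump sum $\sum_{s\in(0,t]\cap\mathcal{D}(p)}f(s,\cdot,p(s))$ separately absolutely convergent, so \eqref{eq-108} follows from \eqref{prop_eq_1} by the monotone exhaustion $D_n\uparrow Z$ together with dominated convergence. The recurring subtlety in (iii)--(iv) is upgrading the $L^2$ convergence of the stochastic-integral terms to almost-sure identification of the pathwise limits, which the $\mathcal{M}_T^1\cap\mathcal{M}_T^2$ bounds supply.
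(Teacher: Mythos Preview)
The paper does not actually prove Proposition~\ref{P2.1}; it states the result and refers the reader to \cite{[Ikeda]} (and \cite{[Brz-Hau],[Rud-int],[Zhu]}) for the proofs. Your proposal is precisely the classical It\^o--Watanabe construction that underlies those references: build the integral on simple predictable integrands, verify the isometry and martingale property there, extend by $L^2$-density, and obtain the c\`adl\`ag modification via Doob's maximal inequality. The treatment of (iii) and (iv) via finitely many jumps on $D$, Cauchy--Schwarz for the compensator, and the $\sigma$-finite exhaustion $D_n\uparrow Z$ combined with the $\mathcal{M}_T^1$ bound is likewise the standard route. So your approach is correct and is, in effect, the proof that the paper is citing rather than supplying.
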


\begin{rem}
\begin{enumerate}
    \item[(1)] We may extend the stochastic integral to $\mathcal{P}\otimes\mathcal{Z}$-measurable functions $f$ satisfying
$$   \int_0^T\int_{Z} \|f(s, \cdot, z)\|_H^2 \nu(\d z)\d s<\infty,\  \P\text{-a.s.}. $$
In this case, the  process $\int_0^t\int_Z f(s,\cdot,z)\tilde{N}(\d s,\d z)$, $t\in[0,T]$ is a
c\`{a}dl\`{a}g $2$-integrable local martingale and for every stopping time $\tau\leq T$, we have
            \begin{align*}
    \int_0^{t\wedge\tau}\int_Zf(s,\cdot,z)\tilde{N}(\d s,\d z)=\int_0^t\int_Z1_{[0,\tau]}f(s,\cdot,z)\tilde{N}(\d s,\d z).
    \end{align*}
\item[(2)] From now on, whenever we use the stochastic  process
$\int_0^t\int_Z f(s,\cdot,z)\tilde{N}(\d s,\d z)$, $t\in[0,T]$, we implicitly assume that it
has c\`{a}dl\`{a}g trajectories, in which case, the stochastically equivalence coincides with the $\mathbb{P}$-equivalence.

\item[(3)] For  Banach spaces martingale type $p$ ($1<p\leq 2$), one has, instead of the It\^{o}  isometry property \eqref{eq-iso},
 the following continuity property ($C_p$ is some constant)(cf. \cite{[Zhu]}):
\begin{align*}
\mathbb{E}\big{\|}\int_0^T\int_Z f(s,\cdot,z)\tilde{N}(\d s,\d z)\big{\|}_H^p
\leq C_p \mathbb{E}\int_0^T\int_Z\|f(s,\cdot,z)\|_H^p\nu(\d z)\d s.
\end{align*}
\item[(4)] Even though there are close connections between predictable processes and progressively measurable processes,
 the  predictability requirement of the function $f$ in Proposition \ref{P2.1} $(iii)$ and $(iv)$ is necessary.
 In fact, one can find a progressively measurable but not predictable function such that identities
\eqref{prop_eq_1} and \eqref{eq-108} no longer hold (cf. \cite{[Zhu]}).
\end{enumerate}
\end{rem}

One should note that  another important and widely   used class of Poisson random measures
are the  one associated to a L\'{e}vy process,  which is actually  a special type of Poisson random measures associated to a Poisson point process as we  discussed before.
More precisely,  let $L:=(L_t)_{t\geq0}$ be an  $Z$-valued L\'{e}vy process, where $Z$ is a separable Banach space.
Without loss of generality we may  assume
 that the L\'{e}vy process $L$ is c\`{a}dl\`{a}g, even if we don't impose the c\`{a}dl\`{a}g property in the definition of a L\'{e}vy process,
see e.g.   \cite[Theorem 16.1]{[Dinculeanu]}. Hence,
for every $\omega\in\Omega$, $L_\cdot(\omega)$ has at most  countable number of jumps on $[0,t]$.
So it is easy to see that for every $\omega\in\Omega$,
 $$\triangle L_{\cdot}(\omega): [0, \infty)\rightarrow Z; \    \triangle L_s(\omega):=L_s(\omega)- L_{s-}(\omega)$$
 is a point function in $(Z\setminus\{0\},\mathcal{B}(Z\setminus\{0\}))$.
   Let us define a function $N$ by
  \begin{align}\label{eq-N-4}
     N(A,\omega)=\sharp\{s\in(0,\infty):(s,\triangle L_s(\omega))\in A\},\ \ A\in\mathcal{B}((0,\infty))\otimes\mathcal{B}( Z\setminus\{0\}),\ \omega\in\Omega.
 \end{align}
It is easy to check that
 $\triangle L:\Omega\rightarrow \Pi_Z$ is $\mathcal{F}/\mathcal{Q}$-measurable. Thereby  $\triangle L$ is a point process.
Since the L\'{e}vy process $L$ has independent and stationary increments, one can show  that the point process $\triangle L$ is stationary and renewal.  Obviously,  by taking
$D_n=\{x\in Z: \|x\|>\frac{1}{n}\}$, we find that the point process $\triangle L$ is $\sigma$-finite.
On the basis of Remark \ref{rem-1}, we know that the function $N$ defined by equality \eqref{eq-N-4} is a
stationary Poisson random measure  $$\E N((0,t]\times A)=t\nu(A),\ t>0,\ A\in\mathcal{B}(Z\setminus\{0\}),$$
where $\nu$ is a nonnegative measure.
In this case,   $N$ is called the Poisson random measure associated to the L\'{e}vy process $L$.

\section{Proof of The  Theorem \ref{T1}}

\subsection{The case without large jumps}

First of all we note that since $\nu(D)<\infty$,  the set
$$\{s\in(0,T]\cap\mathcal{D}(p):p(s,\omega)\in D\}$$
 contains only finitely many points for almost all $\omega\in\Omega$.
Put
\begin{align*}
\tau_1&=\inf\{s\in(0,\infty)\cap\mathcal{D}(p):p(s)\in D\}\wedge T;\\
  \tau_m&=\inf\{s\in(0,\infty)\cap\mathcal{D}(p): p(s)\in D; s>\tau_{m-1}\}\wedge T,\ m\geq2.
\end{align*}
        The random times $\tau_1,\tau_2,\cdots$ form a random configuration of points in $(0,T]$ with $p(\tau_i)\in D$ and it is a sequence
        of jump times of the Poisson process $N(t,D):=N((0,t]\times D)$, $t\in(0,T]$. We see at  once that $\tau_m\uparrow T$ as $m\rightarrow\infty$ $\mathbb{P}$-a.s. and
 for each $m$, the random time $\tau_m$ is a stopping time. 
 Note that since $\int_{0}^tg(s,X_{s-}, z)N(\d s, \d z)=0$  for $t\in[0,\tau_{1})$,
 the equation \eqref{SEE} on the interval $[0,\tau_1)$ can be rewritten into the following type of equation:
\begin{align}\begin{split}\label{SEE-1}
    & \d X_t=A(t,X_t)\d t+B(t,X_t)\d W_t+\int_{D^c}f(t,X_{t-},z)\tilde{N}(\d t,\d z),  \ t\in [0,\tau_1), \\
     &X_0=x.
\end{split}
\end{align}
Actually, by means of the interlacing procedure (which will be introduced in Section 3.2 and cf. also Theorem 9.1 in \cite{[Ikeda]}),
 for the proof of Theorem \ref{T1}
  it is sufficient to  show  the existence and uniqueness of solutions to (\ref{SEE-1}).

    \beg{thm}\label{T2}
    Under the  assumptions of Theorem \ref{T1}, for every
 $x  \in L^{\beta+2}(\Omega, \mathcal{F}_0,\mathbb{P};H)$, there exists a unique c\`{a}dl\`{a}g $H$-valued $\mathbb{F}$-adapted process $(X_t)$ such that
 $\mathbb{P}$-a.s.:
              \begin{align}
                          X_t=x +\int_0^tA(s,\bar{X}_s)\d s+\int_0^t B(s,\bar{X}_s)\d W_s+\int_0^t\int_{D^c} f(s,\bar{X}_{s-},z)\tilde{N}(\d s,\d z),\ t\in[0,T],
              \end{align}
        where $\bar{X}\in L^\alpha([0,T]\times \Omega, \d t\times\P; V)\cap L^2([0,T]\times \Omega, \d t\times\P; H)$ and
 it is $\d t\times\mathbb{P}$-equivalent to $X$.

   Moreover, we have
\begin{equation}
  \sup_{t\in[0,T]} \E \|X_t\|_H^{\beta+2}  + \E \int_0^T \|X_t\|_H^\beta \|X_t\|_V^\alpha \d t
 \le C\left(\E\|x \|_H^{\beta+2}+ \E\int_0^T F_t^{(\beta+2)/2}\d t\right).
\end{equation}
     \end{thm}


 The proof of Theorem \ref{T2} is divided into three steps.
Assume that
$\{e_1,e_2,\cdots \}\subset V$ is an orthonormal basis of $H$ such that $span\{e_1,e_2,\cdots\}$ is dense in $V$.
Denote $H_n:=span\{e_1,\cdots,e_n\}$. Let $P_n:V^*\rightarrow H_n$ be defined by
$$ P_ny:=\sum_{i=1}^n { }_{V^*}\<y,e_i\>_V e_i, \ y\in V^*.  $$
It is easy to see that $P_n|_H$ is just  the orthogonal projection onto $H_n$ in $H$ and we have
$$ { }_{V^*}\<P_nA(t,u), v\>_V=\<P_nA(t,u),v\>_H={ }_{V^*}\<A(t,u),v\>_V, \ u\in V, v\in H_n.  $$
Let $\{g_1,g_2,\cdots \}$ be an orthonormal basis of $U$ and
$$ W^{(n)}_t:=\sum_{i=1}^n\<W_t,g_i\>_U g_i=\tilde{P}_n W_t, $$
where
 $\tilde{P}_n$ is the orthogonal projection from $U$ onto $span\{g_1,\cdots,g_n\}$.

For each
$n\in \mathbb{N}$, we consider the following stochastic differential
equation on $H_n$:
\begin{align}
\begin{split} \label{approximation}
&\d X_t^{(n)}=P_n A(t,X_t^{(n)}) \d t+ P_n B(t,X_t^{(n)}) \d
W_t^{(n)}+\int_{D^c}P_nf(t,X_{t-}^{(n)},z)\tilde{N}(\d t,\d z), \\
&\ X_0^{(n)}=P_n x.
\end{split}
\end{align}

According to \cite[Theorem 1]{[G+K]} (cf. also  \cite[Theorem 3.1]{ABW}),
  \eqref{approximation} has a unique  strong solution, i.e. satisfying the following integral equation:
\begin{align}\label{eq-solution}
        X_t^{(n)}=&P_n x+\int_{0}^tP_n A(s,X_s^{(n)}) \d t+\int_{0}^t P_n B(s,X_s^{(n)}) \d
W_s^{(n)}\\
&+\int_{0}^t\int_{D^c}P_nf(s,X_{s-}^{(n)},z)\tilde{N}(\d s,\d z), \ \ t\in[0,T]. \nonumber
\end{align}

In order to construct the solution to equation (\ref{SEE-1}), we need to find a
priori estimate for $X^{(n)}$.

\begin{lem}\label{L2}
 Under the  assumptions of Theorem \ref{T1}, there exists $C>0$ such that
\begin{equation}\begin{split}\label{l2'}
&  \sup_{n\in\mathbb{N}}\Big{(}\sup_{t\in[0,T]} \E\|X_t^{(n)}\|_H^{\beta+2} +\E\int_{0}^T \|X_t^{(n)}\|_H^{\beta}\|X_t^{(n)}\|_V^\alpha \d t\Big{)} \\
 \le& C\left(\E\|x \|_H^{\beta+2}+ \E\int_0^T F_t^{(\beta+2)/2}\d t\right).
\end{split}
\end{equation}
\end{lem}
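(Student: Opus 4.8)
The plan is to apply the finite-dimensional Itô formula to $\|X_t^{(n)}\|_H^2$, then to $\Phi(\|X_t^{(n)}\|_H^2)$ with $\Phi(r) = r^{(\beta+2)/2}$, and to close a Gronwall-type estimate using the coercivity assumption $(H3)$ together with the structural bounds \eqref{c3}, \eqref{c4}, \eqref{c5}. First I would write, for fixed $n$, the Itô formula for the semimartingale $X^{(n)}$ solving \eqref{eq-solution}: for the squared norm one gets
\begin{align*}
\|X_t^{(n)}\|_H^2 = \|P_nx\|_H^2 &+ \int_0^t \Big( 2\,{}_{V^*}\<A(s,X_s^{(n)}),X_s^{(n)}\>_V + \|P_nB(s,X_s^{(n)})\tilde P_n\|_2^2 \Big)\d s \\
&+ \int_0^t\!\!\int_{D^c}\|P_nf(s,X_{s-}^{(n)},z)\|_H^2 N(\d s,\d z) + M_t^{(n)},
\end{align*}
where $M^{(n)}$ collects the Wiener stochastic integral and the compensated Poisson integral (the latter absorbing the $-\nu$ compensator, so that on the drift side one is left with the $N$-integral of $\|f\|_H^2$, whose compensator is exactly $\int_{D^c}\|f\|_H^2\nu(\d z)\d s$). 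Using $\|P_nB\tilde P_n\|_2^2\le\|B\|_2^2$ and taking expectations kills $M^{(n)}$; combining the drift terms via $(H3)$ and $\E\int\!\!\int\|f\|_H^2 N = \E\int\!\!\int\|f\|_H^2\nu(\d z)\d s$ yields
\[
\E\|X_t^{(n)}\|_H^2 + \theta\,\E\int_0^t\|X_s^{(n)}\|_V^\alpha\d s \le \E\|x\|_H^2 + \E\int_0^t F_s\,\d s + C\,\E\int_0^t\|X_s^{(n)}\|_H^2\d s,
\]
and Gronwall gives the $\beta=0$ version of \eqref{l2'}. This is the base case; the point of the lemma is the weighted estimate for general $\beta\ge0$.

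For the general case I would apply Itô's formula to $\|X_t^{(n)}\|_H^{\beta+2} = \Phi(\|X_t^{(n)}\|_H^2)$ with $\Phi(r)=r^{(\beta+2)/2}$, noting $\Phi'(r) = \tfrac{\beta+2}{2}r^{\beta/2}$, $\Phi''(r)=\tfrac{(\beta+2)\beta}{4}r^{(\beta-2)/2}$. The first-order term produces $\tfrac{\beta+2}{2}\|X_s^{(n)}\|_H^\beta\big(2\,{}_{V^*}\<A,X_s^{(n)}\>_V + \|B\|_2^2 + \text{(jump contribution)}\big)$, and on this one uses $(H3)$ again to extract the good term $\tfrac{\beta+2}{2}\,\theta\,\|X_s^{(n)}\|_H^\beta\|X_s^{(n)}\|_V^\alpha$, against which everything else must be controlled. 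The second-order (Wiener) term $\tfrac12\Phi''\|B\|_2^2\|X_s^{(n)}\|_H^2$-type contribution is handled by \eqref{c3}: the part $\gamma\|X_s^{(n)}\|_V^\alpha$ multiplied by $\|X_s^{(n)}\|_H^\beta$ contributes $c\,\gamma\,\beta\,\|X_s^{(n)}\|_H^\beta\|X_s^{(n)}\|_V^\alpha$, and since $\gamma<\theta/(2\beta)$ this is strictly dominated by the good term — this is where the sharp constant $\theta/(2\beta)$ enters and is the delicate bookkeeping point. The jump terms require care: the exact Itô correction for the pure-jump part is $\sum_{s\le t}\big[\Phi(\|X_{s-}^{(n)}+P_nf\|_H^2) - \Phi(\|X_{s-}^{(n)}\|_H^2) - \Phi'(\|X_{s-}^{(n)}\|_H^2)\cdot 2\<X_{s-}^{(n)},P_nf\>_H\big]$ plus a compensated martingale; bounding the bracket by the elementary inequality $|\,|a+b|^{p} - |a|^{p} - p|a|^{p-2}\<a,b\>\,| \le C_p(|a|^{p-2}|b|^2 + |b|^p)$ (valid for $p=\beta+2$) and taking compensators reduces this to integrals of $\|X_{s-}^{(n)}\|_H^\beta\int_{D^c}\|f\|_H^2\nu(\d z)$ and $\int_{D^c}\|f\|_H^{\beta+2}\nu(\d z)$, which are controlled by \eqref{c3} and \eqref{c4} respectively — again the $\|X\|_V^\alpha$ pieces coming out with a $\gamma$-small coefficient.

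Collecting everything, after discarding the good term where it beats the bad ones and after passing the remaining martingale parts to zero under expectation (using a localization by stopping times $\sigma_R = \inf\{t : \|X_t^{(n)}\|_H > R\}$ and monotone/dominated convergence to remove the localization, the a priori $H$-finiteness being guaranteed by the base-case estimate), one arrives at
\[
\E\|X_t^{(n)}\|_H^{\beta+2} + c\,\E\int_0^t\|X_s^{(n)}\|_H^\beta\|X_s^{(n)}\|_V^\alpha\d s \le \E\|x\|_H^{\beta+2} + \E\int_0^t F_s^{(\beta+2)/2}\d s + C\,\E\int_0^t\|X_s^{(n)}\|_H^{\beta+2}\d s,
\]
where I have used $\|X_s^{(n)}\|_H^\beta F_s \le C(\|X_s^{(n)}\|_H^{\beta+2} + F_s^{(\beta+2)/2})$ by Young's inequality, and Gronwall's lemma then yields \eqref{l2'} with a constant independent of $n$ (since $\|P_nx\|_H\le\|x\|_H$ and all the structural constants $C,\theta,\gamma,\beta$ are $n$-free). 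The main obstacle is the jump-term analysis: getting the Taylor-remainder estimate with the right powers and then checking that, after compensation, the $\|X\|_V^\alpha$-contributions from \eqref{c3}–\eqref{c4} combine with the Wiener-side contribution to a total coefficient strictly below $\tfrac{\beta+2}{2}\theta$ — this is exactly what the hypothesis $\gamma<\theta/(2\beta)$ is calibrated to ensure, and it is the place where one must be most careful with constants. The localization/martingale-vanishing argument is routine but must be stated, since at the outset one only knows $X^{(n)}$ is $H_n$-valued, not that its $(\beta+2)$-moments are a priori finite.
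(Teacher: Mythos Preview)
Your proposal is correct and follows essentially the same route as the paper: apply the It\^o formula to $\|X^{(n)}\|_H^{p}$ with $p=\beta+2$, use $(H3)$ to extract the good term $\tfrac{p\theta}{2}\|X^{(n)}\|_H^{p-2}\|X^{(n)}\|_V^\alpha$, control the Wiener second-order correction and the jump remainder via \eqref{c3}, \eqref{c4} together with the Taylor-type inequality $|\,\|a+b\|^p-\|a\|^p-p\|a\|^{p-2}\<a,b\>\,|\le C_p(\|a\|^{p-2}\|b\|^2+\|b\|^p)$, localize with the stopping times $\tau_R^{(n)}$, take expectations to kill the martingale parts, apply Gronwall, and pass $R\to\infty$ by Fatou. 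The only cosmetic differences are that the paper applies It\^o directly to $\|\cdot\|_H^p$ rather than composing $\Phi(r)=r^{p/2}$ with $\|\cdot\|_H^2$, and it does not separate out the $\beta=0$ case as a preliminary step; your warning that the bookkeeping of the $\gamma$-contributions (Wiener \emph{and} jump) against the coefficient $\tfrac{p\theta}{2}$ is the delicate point is well placed --- the paper's own estimate of $\E I(t)$ is in fact somewhat terse on exactly this issue.
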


\begin{proof} For any given $n\in\mathbb{N}$, we define
\begin{align}\label{stopping time}
      \tau_R^{(n)}:=\inf\{t\geq0:\|X_t^{(n)}\|_H>R\}\wedge T.
\end{align}
Since the solution $(X_t^{(n)})_{0\leq t\leq T}$ is right continuous and $\mathbb{F}$-adapted,
 $\tau_R^{(n)}$ is a stopping time   for every $R\in\mathbb{N}$.
Moreover, since the trajectories of $X^{(n)}$ are $\mathbb{P}$-a.s. right  continuous with left limits ,
the process $X^{(n)}$ is bounded on every compact intervals.  Hence we see that $\tau_R^{(n)}\uparrow T$
, $\mathbb{P}$-a.s. and $\mathbb{P}\{\tau_R^{(n)}<T\}=0$ as $R\rightarrow\infty$.

 For the simplicity of notations we take $p=\beta+2$. By applying  the It\^{o} formula (cf. \cite{[Met]}) to the function $\|\cdot\|_H^p$ and the process $X_{t}^{(n)}$  we have
\begin{align}\begin{split}\label{Ito estimate 2}
\|X_{t}^{(n)}\|_H^p=&\|X_0^{(n)}\|_H^p
+p(p-2)\int_{0}^{t}\|X_{s-}^{(n)}\|_H^{p-4}\|
(P_nB(s,X_s^{(n)})\tilde{P}_n)^* X_{s-}^{(n)}\|_{H}^2 \d s\\
&+\frac{p}{2}\int_{0}^{t}\|X_{s-}^{(n)}\|_H^{p-2}\left( 2 {
}_{V^*}\<A(s,X_s^{(n)}),X_{s-}^{(n)}\>_V+\|P_nB(s,X_s^{(n)})\tilde{P}_n\|_{2}^2\right)
\d s
\\
&+\int_{0}^{t}p\|X_{s-}^{(n)}\|_H^{p-2}\<X_{s-}^{(n)},P_n B(s,X_s^{(n)})\d
W_s^{(n)}\>_H\\
&+\int_{0}^{t}\int_{D^c}p\|X_{s-}^{(n)}\|^{p-2}_H\<X_{s-}^{(n)},P_nf(s,X_{s-}^{(n)},z)\>_H\tilde{N}(\d s,\d z)\\
&+\int_{0}^{t}\int_{D^c}\Big{[}\|X_{s-}^{(n)}+P_nf(s,X_{s-}^{(n)},z)\|^p_H-\|X_{s-}^{(n)}\|^p_H\\
&\hspace{1cm}-p\|X_{s-}^{(n)}\|_H^{p-2}\<X_{s-}^{(n)},P_nf(s,X_{s-}^{(n)},z)\>_H\Big{]}N(\d s,\d z),
\  \ t\in[0,T].
\end{split}
\end{align}
Then assumption $(H3)$ and \eqref{c3}  imply that
\begin{align*}
      &    \|X_t^{(n)}\|_H^p + \frac{p\theta}{2} \int_{0}^{t}  \|X_s^{(n)}\|_H^{p-2}\|X_s^{(n)}\|_V^{\alpha}\d s\\
 \le & \|x\|_H^p+p(p-2)\int_{0}^{t} \left(C \|X_s^{(n)}\|_H^{p}+F_s \cdot \|X_s^{(n)}\|_H^{p-2} +\gamma  \|X_s^{(n)}\|_H^{p-2}\|X_s^{(n)}\|_V^{\alpha}
        \right)\d s\\
        &+\frac{p}{2}\int_{0}^{t} \left(K \|X_s^{(n)}\|_H^{p} + F_s\cdot\|X_s^{(n)}\|_H^{p-2}
        \right)\d s+Y(t)+Z(t)+I(t),
    \end{align*}
    where $Y, Z, I$ are processes defined by, for $t\geq0$,
    \begin{align*}
&Y(t)=\int_{0}^{t}p\|X_{s-}^{(n)}\|_H^{p-2}\<X_{s-}^{(n)},P_n B(s,X_s^{(n)})\d
W_s^{(n)}\>_H;\\
&Z(t)=\int_{0}^{t}\int_{D^c}p\|X_{s-}^{(n)}\|^{p-2}_H\<X_{s-}^{(n)},P_nf(s,X_{s-}^{(n)},z)\>_H\tilde{N}(\d s,\d z);\\
&I(t)=\int_{0}^{t}\int_{D^c}\Big{|}\|X_{s-}^{(n)}+P_nf(s,X_{s-}^{(n)},z)\|^p_H-\|X_{s-}^{(n)}\|^p_H\\
&\hspace{1cm}-p\|X_{s-}^{(n)}\|_H^{p-2}\<X_{s-}^{(n)},P_nf(s,X_{s-}^{(n)},z)\>_H\Big{|}N(\d s,\d z).
\end{align*}
Note that $\|X_t^{(n)}\|_H\leq R$, for $t<\tau_R^{(n)}$ and $X_t^{(n)}$ takes values in $H_n$. Hence
there exists a constant $C$ such that
$$\|X_t^{(n)}\|_V\leq C R, \ \  t<\tau_R^{(n)}.$$
Thus
 by (\ref{c3}) we have
 $$\E\int_0^{t\wedge\tau_R^{(n)}}\|X_{s-}^{(n)}\|_H^{2(p-1)}\|B(s,X_{s}^{(n)})\|^2_2\ \d s<\infty$$
$$\E\int_0^{t\wedge\tau_R^{(n)}}\int_{D^c}\|X_{s-}^{(n)}\|_H^{2(p-1)}\|f(s,X_{s-}^{(n)},z)\|_H^2\ \nu(\d z)\d s<\infty.$$
Therefore, the stopped processes  $Y_{t\wedge\tau_R^{(n)}}$ and $Z_{t\wedge\tau_R^{(n)}}$ are martingales.
 Denote,  for notational simplicity, the stopped process $X_{t\wedge\tau_R^{(n)}}^{(n)}$ etc again by $X_{t }^{(n)}$ etc.
Then by  Young's inequality and martingale property  we have
\begin{align*}
      &  \E  \|X_t^{(n)}\|_H^p + \left(\frac{p\theta}{2}-\gamma p(p-2)  \right) \E \int_{0}^{t}  \|X_s^{(n)}\|_H^{p-2}\|X_s^{(n)}\|_V^{\alpha}\d s\\
    \le & \E\|x\|_H^p + C \E \int_{0}^{t}\left(\|X_s^{(n)}\|_H^{p}+ F_s^{p/2}\right)\d s  +  \E I(t),
\end{align*}
where $C$ is  some  constant.

  Define $g(t):=\|x+t h\|_H^p$,  then by applying the Taylor formula to $g$ (cf. e.g.\cite{ana+Dra}) we can get  that for some constant $C_p$ ($p\geq2$)
\begin{align}\label{eq-31}
       \left|\|x+h\|_H^{p}-\|x\|^{p}_H-p\|x\|_H^{p-2} \<x, h\>_H\right|
       &=p \left|   \int_0^1 [\|x+th\|_H^{p-2}\langle x+th,h\rangle_H-\|x\|_H^{p-2}\langle x, h\rangle_H ]\, \d t   \right|\nonumber\\
       &\leq C_p\int_0^1(\|x\|_H+\|h\|_H)^{p-2}|h|_H^2 t\,\d t\\
&\leq C_p(\|x\|^{p-2}_H\|h\|_H^2+\|h\|_H^{p}),\   x,h\in H_n.\nonumber
\end{align}
 In particular, if $p=2$, the above inequality can be replaced by the equality with $C_p=1$, i.e.
\begin{equation*}
   \left|\|x+h\|_H^{2}- \|x\|^{2}_H-2\<x,h\>_H\right|=\|h\|_H^2,\ \ \text{for all } x,h\in H_n.
\end{equation*}
Thus it follows from \eqref{eq-31} and \eqref{c4}  that
\begin{align}
    \begin{split}\label{Est-I}
 \E I(t)&\leq\E\int_{0}^{t}\int_{D^c}\Big{|}\|X_{s-}^{(n)}+P_nf(s,X_{s-}^{(n)},z)\|^p_H-\|X_{s-}^{(n)}\|^p_H\\
&\hspace{1cm}-p\|X_{s-}^{(n)}\|_H^{p-2}\<X_{s-}^{(n)},P_nf(s,X_{s-}^{(n)},z)\>_H\Big{|}N(\d s,\d z)\\
&=\E\int_{0}^{t}\int_{D^c}\Big{|}\|X_{s}^{(n)}+P_nf(s,X_{s}^{(n)},z)\|^p_H-\|X_{s}^{(n)}\|^p_H\\
&\hspace{1cm}-p\|X_{s}^{(n)}\|^{p-2}\<X_{s}^{(n)},P_nf(s,X_{s}^{(n)},z)\>_H\Big{|}\nu(\d z)\d s\\
&\leq  C \E\int_{0}^{t}  \|X_s^{(n)}\|^{p-2}_H \|f(s,X_{s}^{(n)},z)\|_H^2 \d s+   C \E\int_{0}^{t}  \|f(s,X_{s}^{(n)},z)\|_H^{p}  \d s\\
 &\leq C  \E\int_{0}^{t} \left(F_t^{p/2}+\|X_s^{(n)}\|^p_H  \right)\d s,
\end{split}
\end{align}
where  $C$ is  a generic constant.

Combining the above  estimates  we get
\begin{align*}
      &   \E  \|X_t^{(n)}\|_H^p + \left( \frac{p\theta}{2}- \gamma p(p-2)\right)  \E   \int_{0}^{t}  \|X_s^{(n)}\|_H^{p-2}\|X_s^{(n)}\|_V^{\alpha}\d s\\
\le & \E\|x\|_H^p  + C \E\int_{0}^{t} \left( \|X_s^{(n)}\|_H^p +F_s^{p/2} \right) \d s,
\end{align*}
where $C$ is some constant.

By the Gronwall
Lemma we have some constant $C>0$ such that for any $ R\ge 0$ and any $n\ge 1$
$$ \E \|X_{t\wedge\tau_R^{(n)}}^{(n)}\|_H^p +\E\int_{0}^{T\wedge\tau_R^{(n)}}  \|X_s^{(n)}\|_H^{p-2}\|X_s^{(n)}\|_V^{\alpha}\d s
\le C\left(\E\|x\|_H^p+ \E\int_{0}^{T} F_s^{p/2}\d s\right), \ t\ge 0.$$
Here the constant $C$ is independent of $n$ and the stopping times $\tau_R^{(n)}$.
Therefore, by applying the Fatou Lemma we get the desired inequality \eqref{l2'}.
\end{proof}

 If we assume the  assumptions of Theorem \ref{T1}, but with the condition \eqref{c4} replaced by a weaker assumption
 \begin{align}
& \int_{D^c} \|f(t,v,z)\|^{\beta+2}_H\nu(\d z)\leq   F_t^{(\beta+2)/2}+ C\|v\|^{\beta+2}_H+\gamma \|v\|_H^\beta\|v\|_V^\alpha, \label{c4'}
\end{align}
  we arrive at the following Lemma.
\begin{lem}\label{R1}
There exists  a constant $\gamma_0$ such that if  \eqref{c4'} is satisfied with  $\gamma<\gamma_0$,  then we have
\begin{equation}\begin{split}\label{l2}
 \sup_{n\in\mathbb{N}}\Big{(}\E\sup_{t\in[0,T]}\|X_t^{(n)}\|_H^{\beta+2} +\E\int_{0}^T \|X_t^{(n)}\|_H^{\beta}\|X_t^{(n)}\|_V^\alpha \d t\Big{)}
 \le  C\left(\E\|x \|_H^{\beta+2}+ \E\int_0^T F_t^{(\beta+2)/2}\d t\right).
\end{split}\end{equation}
\end{lem}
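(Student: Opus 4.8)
The plan is to repeat the argument of Lemma \ref{L2} with two changes. First, one checks that the estimates of Lemma \ref{L2} survive when \eqref{c4} is replaced by the weaker bound \eqref{c4'}, provided $\gamma$ stays below a threshold $\gamma_0$: under \eqref{c4'} the estimate \eqref{Est-I} for $\E I(t)$ gains an extra summand $C\gamma\,\E\int_0^t\|X_s^{(n)}\|_H^{\beta}\|X_s^{(n)}\|_V^\alpha\,\d s$ (with $p:=\beta+2$), which, together with the $\gamma p(p-2)$-term already present from the It\^o correction, must be absorbed into the coercivity term $\tfrac{p\theta}{2}\,\E\int_0^t\|X_s^{(n)}\|_H^{\beta}\|X_s^{(n)}\|_V^\alpha\,\d s$ on the left-hand side; choosing $\gamma_0$ so that the resulting coefficient remains positive, the Gronwall step of Lemma \ref{L2} then yields, uniformly in $n$ and in the stopping times $\tau_R^{(n)}$ of \eqref{stopping time},
\begin{equation*}
\sup_{t\in[0,T]}\E\|X_{t\wedge\tau_R^{(n)}}^{(n)}\|_H^{\beta+2}+\E\int_0^{T\wedge\tau_R^{(n)}}\|X_s^{(n)}\|_H^{\beta}\|X_s^{(n)}\|_V^\alpha\,\d s\le C\Big(\E\|x\|_H^{\beta+2}+\E\int_0^T F_s^{(\beta+2)/2}\,\d s\Big).
\end{equation*}
The second, and new, ingredient is to take the supremum over $t\in[0,T]$ \emph{before} the expectation, controlling the martingale terms by the Burkholder--Davis--Gundy (BDG) inequality.

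Concretely, I would return to the It\^o formula \eqref{Ito estimate 2} for $\|\cdot\|_H^p$ applied to the process stopped at $\tau_R^{(n)}$ (still written $X^{(n)}$, with $Y,Z,I$ the corresponding stopped terms), take $\sup_{t\le T}$ of both sides and then expectations. The Lebesgue-type terms, and $\sup_{t\le T}I(t)=I(T)$ since $I$ is increasing, are estimated exactly as in Lemma \ref{L2} and are bounded by $C(\E\|x\|_H^{\beta+2}+\E\int_0^T F_s^{(\beta+2)/2}\,\d s)$ after inserting the a priori bounds just obtained (the $\gamma$-multiples of $\E\int_0^T\|X_s^{(n)}\|_H^{\beta}\|X_s^{(n)}\|_V^\alpha\,\d s$ arising on the way are harmless here, being dominated by the a priori bound on that quantity). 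For the Wiener martingale $Y$, BDG and the inequality $\|X_s^{(n)}\|_H^{2(p-1)}\le\big(\sup_{r\le T}\|X_r^{(n)}\|_H^{p}\big)\|X_s^{(n)}\|_H^{p-2}$ give
\begin{equation*}
\E\sup_{t\le T}|Y(t)|\le C\,\E\Big(\int_0^T\|X_s^{(n)}\|_H^{2(p-1)}\|B(s,X_s^{(n)})\|_2^2\,\d s\Big)^{1/2}\le C\,\E\Big(\sup_{r\le T}\|X_r^{(n)}\|_H^{p}\int_0^T\|X_s^{(n)}\|_H^{p-2}\|B(s,X_s^{(n)})\|_2^2\,\d s\Big)^{1/2},
\end{equation*}
and Young's inequality $ab\le\varepsilon a^2+C_\varepsilon b^2$, followed by \eqref{c3} and one further Young inequality on the $F_s\|X_s^{(n)}\|_H^{p-2}$ piece, bounds the right-hand side by $\varepsilon\,\E\sup_{t\le T}\|X_t^{(n)}\|_H^{p}$ plus terms controlled by the a priori bounds. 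The compensated Poisson martingale $Z$ is handled in the same way, using BDG in the form $\E\sup_{t\le T}|Z(t)|\le C\,\E[Z,Z]_T^{1/2}$ with $[Z,Z]_T=\sum_{s\le T}|\Delta Z_s|^2$, the bound $|\langle X_{s-}^{(n)},P_nf\rangle_H|^2\le\|X_{s-}^{(n)}\|_H^2\|f(s,X_{s-}^{(n)},z)\|_H^2$, the replacement of $N(\d s,\d z)$ by $\nu(\d z)\,\d s$ inside the expectation (the integrand being predictable and non-negative), and \eqref{c3} once more; this again yields $\varepsilon\,\E\sup_{t\le T}\|X_t^{(n)}\|_H^{p}$ plus controlled terms.

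Collecting the contributions and choosing $\varepsilon$ small (say $\varepsilon<\tfrac14$), the two $\varepsilon\,\E\sup_{t\le T}\|X_t^{(n)}\|_H^{p}$ summands are moved to the left-hand side — a step that is legitimate precisely because stopping at $\tau_R^{(n)}$ makes $\E\sup_{t\le T\wedge\tau_R^{(n)}}\|X_t^{(n)}\|_H^{p}$ finite a priori — and one obtains, uniformly in $n$ and $R$,
\begin{equation*}
\E\sup_{t\le T\wedge\tau_R^{(n)}}\|X_t^{(n)}\|_H^{\beta+2}+\E\int_0^{T\wedge\tau_R^{(n)}}\|X_s^{(n)}\|_H^{\beta}\|X_s^{(n)}\|_V^\alpha\,\d s\le C\Big(\E\|x\|_H^{\beta+2}+\E\int_0^T F_s^{(\beta+2)/2}\,\d s\Big).
\end{equation*}
Letting $R\to\infty$ (so $\tau_R^{(n)}\uparrow T$) and applying the monotone convergence/Fatou lemma gives \eqref{l2}, and a final $\sup_n$ completes the proof. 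I expect the main obstacle to be the bookkeeping of the two small parameters: one must verify that $\gamma_0$ can be chosen so that \emph{all} the $\gamma$-multiples of $\int_0^t\|X_s^{(n)}\|_H^{\beta}\|X_s^{(n)}\|_V^\alpha\,\d s$ — from the It\^o correction, from \eqref{Est-I} under \eqref{c4'}, and from uses of \eqref{c3} — are dominated by $\tfrac{p\theta}{2}$, and that the $\varepsilon$-absorption in the $\E\sup$ step is carried out only after localization, so that both sides of the inequality are finite.
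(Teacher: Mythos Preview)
Your proposal is correct and uses the same essential ingredients as the paper: localization at the stopping times $\tau_R^{(n)}$, the BDG inequality for the two martingale terms, Young's inequality to absorb $\varepsilon\,\E\sup_{t}\|X_t^{(n)}\|_H^{p}$ into the left-hand side, and Fatou to remove the stopping. The organizational difference is that you run a \emph{two-pass} argument: a first pass re-derives the Lemma~\ref{L2}-type bound under \eqref{c4'} (yielding a priori control of $\sup_t\E\|X_t^{(n)}\|_H^{p}$ and of $\E\int_0^T\|X_s^{(n)}\|_H^{p-2}\|X_s^{(n)}\|_V^{\alpha}\,\d s$), and a second pass uses these a priori bounds to estimate outright all the integral terms arising after BDG, so that no further Gronwall step is needed. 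The paper instead does everything in a single pass: after BDG and the $\varepsilon$-absorption it keeps the term $C_0\,\E\int_0^{t\wedge\tau_R^{(n)}}\|X_s^{(n)}\|_H^{p}\,\d s$ on the right, moves all the $\gamma$-multiples of the $V$-norm integral to the left (choosing $\gamma_0$ so that the net coefficient $\tfrac{p\theta}{2}-3\gamma C_0$ remains positive), and closes by Gronwall directly on $t\mapsto\E\sup_{s\le t\wedge\tau_R^{(n)}}\|X_s^{(n)}\|_H^{p}$. Your route is slightly longer but has the minor advantage that the threshold $\gamma_0$ is fixed once and for all in the first pass, with no additional smallness of $\gamma$ required in the BDG step; the paper's single-pass argument is more economical but makes $\gamma_0$ depend on the BDG/Young constants as well.
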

\begin{proof}  Let $p=\beta+2$ as before. By
  \eqref{Ito estimate 2},   $(H3)$ and \eqref{c3}, we find
\begin{align*}
      &   \sup_{s\in[0,t\wedge \tau_R^{(n)}]} \|X_s^{(n)}\|_H^p + \frac{p\theta}{2} \int_{0}^{t\wedge\tau_R^{(n)}}  \|X_s^{(n)}\|_H^{p-2}\|X_s^{(n)}\|_V^{\alpha}\d s\\
 \le & \|x\|_H^p+p(p-2)\int_{0}^{t\wedge\tau_R^{(n)}} \left(C \|X_s^{(n)}\|_H^{p}+F_s \cdot \|X_s^{(n)}\|_H^{p-2} +\gamma  \|X_s^{(n)}\|_H^{p-2}\|X_s^{(n)}\|_V^{\alpha}
        \right)\d s\\
        &+\frac{p}{2}\int_{0}^{t\wedge\tau_R^{(n)}} \left(K \|X_s^{(n)}\|_H^{p} + F_s\cdot\|X_s^{(n)}\|_H^{p-2}
        \right)\d s
     +I_1(t)+I_2(t)+I_3(t)\\
    \le & \|x\|_H^p +\gamma p(p-2) \int_{0}^{t\wedge\tau_R^{(n)}} \|X_s^{(n)}\|_H^{p-2}\|X_s^{(n)}\|_V^{\alpha}\d s\\
&+  C \int_{0}^{t\wedge\tau_R^{(n)}}\left(\|X_s^{(n)}\|_H^{p}+ F_s^{p/2}\right)\d s+I_1(t)+I_2(t)+I_3(t),
\end{align*}
where $C$ is  a generic constant, $\tau_R^{(n)}$ are the stopping times defined in (\ref{stopping time})  and $I_1, I_2, I_3$ are processes defined by, for $t\ge 0$,
\begin{align*}
        &I_1(t):=p\sup_{r\in[0,t\wedge\tau_R^{(n)}]}\left|\int_{0}^{r}\|X_s^{(n)}\|_H^{p-2}\<X_s^{(n)}, P_nB(s,X_s^{(n)})\d
W_s^{(n)}\>_H\right|;\\
&I_2(t):=p\sup_{r\in[0,t\wedge\tau_R^{(n)}]}\left|\int_{0}^{r}\int_{D^c}\|X_s^{(n)}\|^{p-2}_H\<X_{s-}^{(n)},P_nf(s,X_{s-}^{(n)},z)\>_H\tilde{N}(\d s,\d z)\right|;\\
&I_3(t):=\sup_{r\in[0,t\wedge\tau_R^{(n)}]}\Big|\int_{0}^{r}\int_{D^c}\Big{[}\|X_{s-}^{(n)}+P_nf(s,X_{s-}^{(n)},z)\|^p_H-\|X_{s-}^{(n)}\|^p_H\\
&\hspace{4cm}-p\|X_{s-}^{(n)}\|_H^{p-2}\<X_{s-}^{(n)},P_nf(s,X_{s-}^{(n)},z)\>\Big{]}N(\d z,\d s)\Big|.
\end{align*}

 On the basis of the Burkholder-Davis-Gundy inequality (cf.\cite{[Ichikawa]}), assumption \eqref{c3}, the Cauchy-Schwartz and the Young inequalities,  we have
for any $\varepsilon>0$,
\begin{align}\label{Est-I-1}
 &~~  \E I_1(t)\\  \nonumber
=&p\E\sup_{r\in[0,t\wedge\tau_R^{(n)}]}\left|\int_{0}^r \|X_s^{(n)}\|_H^{p-2} \<X_s^{(n)},P_n B(s,X_s^{(n)})\d W_s^{(n)}\>_H  \right| \\   \nonumber
   \le & 3p\E\left[\int_{0}^{t\wedge\tau_R^{(n)}}\|X_s^{(n)}\|_H^{2p-2} \|B(s,X_s^{(n)})\|_{2}^2 \d s  \right]^{1/2}\\    \nonumber
   \le & 3p\E\left[ \sup_{s\in[0,t\wedge\tau_R^{(n)}]}\|X_s^{(n)}\|_H^{p}\cdot
\Big(\int_{0}^{t\wedge\tau_R^{(n)}} \|X_s^{(n)}\|_H^{p-2}\left(F_s+C \|X_s^{(n)}\|_H^2+\gamma \|X_s^{(n)}\|_V^{\alpha}\right) \d s\Big)  \right]^{1/2}\\  \nonumber
\le & 3p\left[\varepsilon\E \sup_{s\in[0,t]}\|X_s^{(n)}\|_H^{p}\right]^{1/2}  \left[\frac{1}{\varepsilon}\E
\Big(\int_{0}^{t\wedge\tau_R^{(n)}} \|X_s^{(n)}\|_H^{p-2}\left(F_s+C \|X_s^{(n)}\|_H^2+ \gamma \|X_s^{(n)}\|_V^{\alpha}\right) \d s\Big)  \right]^{1/2} \\  \nonumber
\le &  \varepsilon \E \sup_{s\in[0,t\wedge\tau_R^{(n)}]}\|X_s^{(n)}\|_H^{p}+ C_{\varepsilon,p}\E
\Big(\int_{0}^{t\wedge\wedge\tau_R^{(n)}} \|X_s^{(n)}\|_H^{p-2}\left(F_s+ C\|X_s^{(n)}\|_H^2+ \gamma \|X_s^{(n)}\|_V^{\alpha}\right) \d s\Big)  \\  \nonumber
\le & \varepsilon  \E\sup_{s\in[0,t\wedge\tau_R^{(n)}]}\|X_s^{(n)}\|_H^{p}+\gamma C_{\varepsilon,p}\E\int_{0}^{t\wedge\tau_R^{(n)}}\|X_s^{(n)}\|_H^{p-2}\|X_s^{(n)}\|_V^{\alpha}ds\\
    &+ C_{\varepsilon,p} \E\int_{0}^{t\wedge\tau_R^{(n)}} \left(\|X_s^{(n)}\|_H^p+
F_s^{p/2} \right)\d s.  \nonumber
\end{align}
 Similarly,  using   (\ref{c3}), the Burkholder-Davis inequality  and  the Young inequality  we have
\begin{align}\label{Est-I-2}
\begin{aligned}
    &~~~~ \E I_2(t)\\
&=p\,\E\sup_{r\in[0,t\wedge\tau_R^{(n)}]}\left| \int_{0}^r\int_{D^c}\|X_s^{(n)}\|^{p-2}_H\<X_{s-}^{(n)},P_nf(s,X_{s-}^{(n)},z)\>_H\tilde{N}(\d s,\d z) \right|\\
    &\leq C\E\left[\int_{0}^{t\wedge\tau_R^{(n)}}\int_{D^c}\|X_{s}^{(n)}\|^{2p-2}_H\|P_nf(s,X_{s}^{(n)},z)\|^2_H\nu(\d z)\d s\right]^{\frac{1}{2}}\\
     &\leq C\E\left[\sup_{s\in[0,t\wedge\tau_R^{(n)}]} \|X_{s}^{(n)}\|^{p}_H\left(\int_{0}^{t\wedge\tau_R^{(n)}}\|X_{s}^{(n)}\|^{p-2}_H(F_s+C\|X_{s}^{(n)}\|^2_H+ \gamma
\|X_{s}^{(n)}\|_V^{\alpha})\d s\right)\right]^{\frac{1}{2}}\\
     &\leq \varepsilon \E\sup_{s\in[0,t\wedge\tau_R^{(n)}]} \|X_s^{(n)}\|^p_H   +C_{\varepsilon,p}\E
    \Big(\int_{0}^{t\wedge\tau_R^{(n)}} \|X_{s}^{(n)}\|^{p-2}\left(F_s+\|X_{s}^{(n))}\|_H^2+\gamma\|X_{s}^{(n))}\|_V^{\alpha}\right) \d s\Big) \\
     & \leq  \varepsilon \E\sup_{s\in[0,t\wedge\tau_R^{(n)}]}\|X_s^{(n)}\|_H^{p} + \gamma C_{\varepsilon,p}\E\int_{0}^{t\wedge\tau_R^{(n)}}\|X_{s}^{(n)}\|_H^{p-2}
\|X_{s}^{(n)}\|_V^{\alpha}ds\\
&\ + C_{\varepsilon,p} \E\int_{0}^{t\wedge\tau_R^{(n)}} \left(\|X_{s}^{(n)}\|_H^p+
    F_s^{p/2} \right)\d s,
    \end{aligned}
\end{align}
where  $C_{\varepsilon,p}$ is not necessarily the same number from line to line.

 For the term $I_3(t)$, by \eqref{c4'}, \eqref{c3} and \eqref{eq-31}, we have
\begin{align}\label{Est-I-3}
 \E I_3(t)&\leq\E\int_{0}^{t\wedge\tau_R^{(n)}}\int_{D^c}\Big{|}\|X_{s-}^{(n)}+P_nf(s,X_{s-}^{(n)},z)\|^p_H-\|X_{s-}^{(n)}\|^p_H\\  \nonumber
&\hspace{1cm}-p\|X_{s-}^{(n)}\|_H^{p-2}\<X_{s-}^{(n)},P_nf(s,X_{s-}^{(n)},z)\>_H\Big{|}N(\d s,\d z)\\  \nonumber
&=\E\int_{0}^{t\wedge\tau_R^{(n)}}\int_{D^c}\Big{|}\|X_{s}^{(n)}+P_nf(s,X_{s}^{(n)},z)\|^p_H-\|X_{s}^{(n)}\|^p_H\\  \nonumber
&\hspace{1cm}-p\|X_{s}^{(n)}\|^{p-2}\<X_{s}^{(n)},P_nf(s,X_{s}^{(n)},z)\>_H\Big{|}\nu(\d z)\d s\\  \nonumber
&\leq C_p\E\int_{0}^{t\wedge\tau_R^{(n)}}\int_{D^c}\Big{(}\|X_{s}^{(n)}\|_H^{p-2}\|f(s,X_{s}^{(n)},z)\|^2_H+\|f(s,X_{s}^{(n)},z)\|_H^p\Big{)}\nu(\d z)\d s\\  \nonumber
&\leq \gamma C_p \E\int_{0}^{t\wedge\tau_R^{(n)}}  \|X_{s}^{(n)}\|^{p-2}_H\|X_{s}^{(n)}\|_V^{\alpha}\d s+
 C_p  \E\int_{0}^{t\wedge\tau_R^{(n)}}(F_t^{p/2}+\|X_{s}^{(n)}\|^p_H )\d s.
\end{align}
Combining the  estimates \eqref{Est-I-1}-\eqref{Est-I-3} we get
\begin{align*}
    &\E (I_1(t)+I_2(t)+I_3(t))\\
    \leq & 2 \varepsilon \E\sup_{s\in[0,t\wedge\tau_R^{(n)}]}\|X_s^{(n)}\|_H^{p}+
\gamma C_{\varepsilon,p}\E\int_{0}^{t\wedge\tau_R^{(n)}}\|X_s^{(n)}\|_H^{p-2}\|X_s^{(n)}\|_V^{\alpha}ds\\
 & \ \ \ + C_{\varepsilon,p} \E\int_{0}^{t\wedge\tau_R^{(n)}} \|X_{s}^{(n)}\|_H^p\d s  +C_{\varepsilon,p}\E\int_{0}^{T}
    F_s^{p/2} \d s.
\end{align*}
Let $\varepsilon=\frac{1}{3}$,  then we have
\begin{align*}
      &  \frac{1}{3} \E \sup_{s\in[0,t\wedge\tau_R^{(n)}]} \|X_s^{(n)}\|_H^p + \left( \frac{p\theta}{2}-3\gamma C_0\right)
 \E   \int_{0}^{t\wedge\tau_R^{(n)}}  \|X_s^{(n)}\|_H^{p-2}\|X_s^{(n)}\|_V^{\alpha}\d s\\
\le & \E\|x\|_H^p  + C_{0} \E\int_{0}^{t\wedge\tau_R^{(n)}} \|X_{s}^{(n)}\|_H^p\d s  +C_{0}\E\int_{0}^{T}
    F_s^{p/2} \d s,
\end{align*}
where $C_0$ is some constant.

Observe that $\|X_{s}^{(n)}\|_H\leq R$, for $s<\tau_R^{(n)}$. Then we see that the right-hand side of the above inequality is finite.
 Therefore, if $\gamma$ is  small enough  (e.g.  $\gamma<\gamma_0:=\frac{p\theta}{6C_0}$), we may apply  the Gronwall
Lemma
 to infer that there exists $C>0$ such that for any $n\ge 1$
$$\E\sup_{t\in[0, T\wedge\tau_R^{(n)}]}\|X_t^{(n)}\|_H^p +\E\int_{0}^{T\wedge\tau_R^{(n)}}  \|X_s^{(n)}\|_H^{p-2}\|X_s^{(n)}\|_V^{\alpha}\d s
\le C\left(\E\|x\|_H^p+ \E\int_{0}^{T} F_s^{p/2}\d s\right).$$
Recall that $\tau_R^{(n)}\uparrow T$
, $\mathbb{P}$-a.s. and $\mathbb{P}\{\tau_R^{(n)}<T\}=0$ as $R\rightarrow\infty$.
           Therefore, by the  Fatou  Lemma we obtain that
\begin{align*}
             &\E\sup_{t\in[0, T]}\|X_t^{(n)}\|_H^p +\E\int_{0}^{T}  \|X_s^{(n)}\|_H^{p-2}\|X_s^{(n)}\|_V^{\alpha}\d s\\
            &\leq\liminf_{R\rightarrow\infty} \left( \E\sup_{t\in[0, T\wedge\tau_R^{(n)}]}\|X_t^{(n)}\|_H^p +\E\int_{0}^{T\wedge\tau_R^{(n)}}  \|X_s^{(n)}\|_H^{p-2}\|X_s^{(n)}\|_V^{\alpha}\d s \right) \\
    &\le C\left(\E\|X_0\|_H^p+ \E\int_{0}^{T} F_s^{p/2}\d s\right),  \ \text{for all }n\geq1.
\end{align*}
 This completes the proof of Lemma \ref{R1}.
 \end{proof}

    For the  simplicity of notations,  we introduce the following three auxiliary spaces:
\begin{equation*}
\begin{split}
 &K=L^\alpha([0,T]\times \Omega, \d t\times \P;V);\\
&J=L^2([0,T]\times \Omega, \d t\times \P; \mathcal{T}_2(U;H));\\
&\mathcal{M}=\mathcal{M}^2_T(\mathcal{P}\otimes\mathcal{Z},\d t\times\mathbb{P}\times\nu;H).
\end{split}
\end{equation*}
Note that  $K^*=L^{\frac{\alpha}{\alpha-1}}([0,T]\times\Omega,\d t\times\P;V^*)$.

\begin{lem}\label{pro1}  Under the  assumptions of Theorem \ref{T1},
there exists a
subsequence $(n_k)$ and an element $\bar{X}\in K\cap L^\infty([0,T]; L^p(\Omega; H))$ such that

(i) $X^{(n_k)}\rightarrow \bar{X}$ weakly in $K$ and weakly star in
$L^\infty([0,T]; L^p(\Omega; H))$;

(ii) $Y^{(n_k)}:=P_{n_k}A(\cdot,X^{(n_k)})\rightarrow Y$ weakly in $K^*$;

(iii) $Z^{(n_k)}:=P_{n_k} B(\cdot,X^{(n_k)})\rightarrow Z$ weakly in
$J$ and \dela{hence}
$$    \int_{0}^\cdot P_{n_k} B(s,X^{(n_k)}_s)\d W_s^{(n_k)}\rightarrow \int_{0}^\cdot Z_s \d W_s     $$
weakly in $L^\infty([0,T], \d t; L^2(\Omega,\P; H)) $;

(iv) $F^{(n_k)}:=P_{n_k}f(\cdot,X^{(n_k)}, \cdot)1_{D^c}\rightarrow F1_{D^c}$ weakly in $\mathcal{M}$.
\end{lem}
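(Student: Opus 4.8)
The plan is to extract weakly convergent subsequences from the a priori bounds in Lemma \ref{L2} (and Lemma \ref{R1}), using reflexivity of each auxiliary space together with the uniform estimates, and then to identify the limit of the stochastic integral term by exploiting the continuity (hence weak continuity) of the It\^{o} integral map. First I would check that each of the four sequences is bounded in the relevant Banach space. By \eqref{l2'} the family $(X^{(n)})$ is bounded in $L^\infty([0,T];L^p(\Omega;H))$ and, since $\beta\ge 0$ implies $\|v\|_V^\alpha\le \|v\|_H^\beta\|v\|_V^\alpha+\|v\|_V^\alpha$ is controlled once we know $\E\int_0^T\|X_t^{(n)}\|_H^\beta\|X_t^{(n)}\|_V^\alpha\,\d t$ is bounded (when $\beta>0$ one interpolates against the $L^\infty_tL^p_\omega H$ bound; when $\beta=0$ it is immediate), it is also bounded in $K=L^\alpha([0,T]\times\Omega;V)$. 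Then $(H4)$ together with the same interpolation gives a uniform bound on $Y^{(n)}=P_nA(\cdot,X^{(n)})$ in $K^*=L^{\alpha/(\alpha-1)}([0,T]\times\Omega;V^*)$ (using $\|P_ny\|_{V^*}\le\|y\|_{V^*}$, or at worst a fixed constant times it). Assumption \eqref{c3} gives a uniform bound on $Z^{(n)}=P_nB(\cdot,X^{(n)})$ in $J$ and on $F^{(n)}=P_nf(\cdot,X^{(n)},\cdot)1_{D^c}$ in $\mathcal{M}$, again after absorbing the $\gamma\|X^{(n)}\|_V^\alpha$ term into the already-established $K$-bound.

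Next I would invoke reflexivity. The spaces $K$, $K^*$, $J$, $\mathcal{M}$ are all reflexive ($L^q$ spaces with $1<q<\infty$ valued in reflexive or Hilbert spaces), so bounded sequences have weakly convergent subsequences; $L^\infty([0,T];L^p(\Omega;H))$ is the dual of $L^1([0,T];L^{p/(p-1)}(\Omega;H))$ which is separable, so bounded sequences have weak-star convergent subsequences. By a diagonal argument I extract a single subsequence $(n_k)$ along which $X^{(n_k)}\to\bar X$ weakly in $K$ and weak-star in $L^\infty_tL^p_\omega H$ (the two limits agree because both restrict to the weak limit in, say, $L^2([0,T]\times\Omega;H)$, the embeddings being compatible), $Y^{(n_k)}\to Y$ weakly in $K^*$, $Z^{(n_k)}\to Z$ weakly in $J$, and $F^{(n_k)}\to F1_{D^c}$ weakly in $\mathcal{M}$. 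That $\bar X\in K\cap L^\infty([0,T];L^p(\Omega;H))$ is then automatic since both spaces are weakly (resp. weak-star) closed.

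The one assertion needing a genuine argument is the second half of (iii): that the stochastic convolutions converge weakly in $L^\infty([0,T];L^2(\Omega;H))$. The point is that the map $\mathcal{T}_2(U;H)$-valued process $\Phi\mapsto\big(t\mapsto\int_0^t\Phi_s\,\d W_s\big)$ is a bounded linear operator from $J=L^2([0,T]\times\Omega;\mathcal{T}_2(U;H))$ into $L^\infty([0,T];L^2(\Omega;H))$ (by the It\^{o} isometry, $\E\|\int_0^t\Phi_s\,\d W_s\|_H^2=\E\int_0^t\|\Phi_s\|_2^2\,\d s\le\|\Phi\|_J^2$ uniformly in $t$), hence it is weakly continuous. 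Applying it to $Z^{(n_k)}\rightharpoonup Z$ would immediately give $\int_0^\cdot Z_s^{(n_k)}\,\d W_s^{(n_k)}\rightharpoonup\int_0^\cdot Z_s\,\d W_s$ \emph{if} the integrator were fixed; the mild extra wrinkle is that $W^{(n_k)}=\tilde P_{n_k}W$ depends on $k$, so I would first rewrite $\int_0^\cdot P_{n_k}B(s,X_s^{(n_k)})\,\d W_s^{(n_k)}=\int_0^\cdot P_{n_k}B(s,X_s^{(n_k)})\tilde P_{n_k}\,\d W_s$ and note $P_{n_k}B(\cdot,X^{(n_k)})\tilde P_{n_k}$ still converges weakly to $Z$ in $J$ (the difference from $Z^{(n_k)}$ tends to $0$ strongly in $J$ because $\tilde P_{n_k}\to I$ strongly on $U$ and $\|Z^{(n_k)}\|_J$ is bounded). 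The main obstacle, therefore, is purely bookkeeping: making the interpolation estimates that convert the mixed bound $\E\int_0^T\|X_t^{(n)}\|_H^\beta\|X_t^{(n)}\|_V^\alpha\,\d t\le C$ into an honest bound in $K$ (and the analogous absorption of the $\gamma$-terms in $(H4)$, \eqref{c3}) rigorous, and carefully checking the compatibility of the various weak limits so that a single subsequence works for all four. No compactness is needed at this stage — only reflexivity and the boundedness of the stochastic integral operator — so there is no serious analytic difficulty here; the real work (identifying $Y=A(\cdot,\bar X)$ etc.\ via the local monotonicity trick) is deferred to the subsequent lemmas.
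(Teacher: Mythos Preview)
Your overall architecture matches the paper's proof: boundedness in $K$, $K^*$, $J$, $\mathcal{M}$ from the a priori estimates, then reflexivity/Banach--Alaoglu to extract a common subsequence, and weak continuity of the It\^{o} map for the stochastic integral. Two points, however, need correction.

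First, your route to the $K$-bound is not justified. From \eqref{l2'} you have $\sup_n\E\int_0^T\|X_t^{(n)}\|_H^\beta\|X_t^{(n)}\|_V^\alpha\,\d t<\infty$ and $\sup_n\sup_t\E\|X_t^{(n)}\|_H^{\beta+2}<\infty$, but there is no clean interpolation that produces $\sup_n\E\int_0^T\|X_t^{(n)}\|_V^\alpha\,\d t<\infty$ from these: on the event $\{\|X_t^{(n)}\|_H<1\}$ the factor $\|X_t^{(n)}\|_H^\beta$ is small, not large, and you have no independent control of $\|X_t^{(n)}\|_V^\alpha$ there. The paper sidesteps this entirely by rerunning Lemma~\ref{L2} with $p=2$ (i.e.\ $\beta=0$), which is legitimate because \eqref{c3} already contains the $p=2$ version of \eqref{c4}, the constraint $\gamma<\theta/(2\beta)$ becomes vacuous, and $x\in L^{\beta+2}\subset L^2$, $F\in L^{(\beta+2)/2}\subset L^1$. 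This yields the $K$-bound directly.

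Second, in (iii) your claim that $Z^{(n_k)}(I-\tilde P_{n_k})\to 0$ \emph{strongly} in $J$ is not correct: $I-\tilde P_{n_k}$ has operator norm~$1$ and $Z^{(n_k)}$ varies with $k$, so pointwise convergence $\tilde P_{n_k}\to I$ does not suffice. The right argument (implicit in the paper) is the weak--strong pairing: for any fixed test $\Phi\in J$ one has $\Phi\tilde P_{n_k}\to\Phi$ strongly in $J$ by dominated convergence, and since $\langle Z^{(n_k)}\tilde P_{n_k},\Phi\rangle_J=\langle Z^{(n_k)},\Phi\tilde P_{n_k}\rangle_J$ and $(Z^{(n_k)})$ is bounded in $J$, the weak limits of $Z^{(n_k)}$ and $Z^{(n_k)}\tilde P_{n_k}$ coincide. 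With these two fixes your proof is essentially the paper's.
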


\begin{proof}
    Applying Lemma \ref{L2} with $p=2$  (instead of taking $p=\beta+2$)  we have
    \begin{equation}
    \begin{split}\label{eq-10}
      &\sup_{n}\mathbb{E}\int_{0}^T\|X_t^{(n)}\|_{V}^{\alpha}dt<\infty.
      \end{split}
    \end{equation}
    Since the space K is reflexive,   we can find a weakly convergent subsequence $\{X^{(n_k)}\}$ and
 $\bar{X}\in K$ such that $X^{(n_k)}$ converges to $\bar{X}$ weakly in  $K$.

    Similarly, since $L^\infty([0,T]; L^p(\Omega; H))=(L^1([0,T];L^{\frac{p}{p-1}}(\Omega; H)))^*$,
 by the Banach-Alaoglu Theorem, \eqref{l2'}  allows us to get another weakly$^*$  convergent subsequence (for
 simplicity we still denote it by the same notation $\{X^{(n_k)}\}$)  and  $\bar{X}\in K\cap L^p(\Omega;L^{\infty}([0,T];H))$
such that assertion $(i)$ holds.
   Meanwhile, by  $(H4)$ and \eqref{l2'} we have
    \begin{align*}
     &  \sup_n\E\int_{0}^T\|A(t,X_t^{(n)})\|_{V^*}^{\frac{\alpha}{\alpha-1}} \d t\\
 \leq& \sup_n\E\int_{0}^T(F_t+ C \|X_t^{(n)}\|_V^{\alpha})(1+\|X_t^{(n)}\|_H^{\beta})  \d t\\
 \leq&  C\sup_n\E\int_{0}^T\left(F_t+\|X_t^{(n)}\|_V^{\alpha}+F_t^{\frac{\beta+2}{2}}+\|X_t^{(n)}\|^{\beta+2}_H
+\|X_t^{(n)}\|_H^{\beta}\|X_t^{(n)}\|_{V}^{\alpha}\right)  \d t < \infty.
    \end{align*}
Therefore,  claim $(ii)$ also holds.

 Also, note  that by \eqref{c3}
\begin{align*}
    & \sup_n\E\int_{0}^T\|P_{n}B(t,X_t^{(n)})\|_{2}^2 \d t\\
\leq& \sup_n\E \int_{0}^T\left(F_t+ C \|X_t^{(n)}\|_H^2+ \gamma \|X_t^{(n)}\|^{\alpha}_V\right) \d t <\infty.
\end{align*}
Hence by taking a subsequence we have that $P_{n_k}B(t,X_t^{(n_k)})$ converges to $Z$ weakly in $J$.

Recall that $\tilde{P}_n$ is the orthogonal projection in $U$ onto $span\{g_1,\cdots,g_n\}$. Hence, without loss of generality  we can assume that
$P_{n_k}B(t,X_t^{(n_k)})\tilde{P}_n$ also converges to $Z$ weakly in $J$ .
 Since
\begin{align*}
           \int_{0}^{\cdot}P_{n}B(s,X_t^{(n_k)} ) \d W^{{n_k}}_s=\int_{0}^{\cdot}P_{n_k}B(s,X_s^{(n_k)})\tilde{P}_{n_k}\d W_s,
\end{align*}
  weak convergence is preserved under the linear  continuous maps and the map
$$I:\phi\in J\mapsto I(\phi):=\int\phi\ \d W\in L^{2}([0,T]\times\Omega; H)$$
is continuous,
 we infer that
\begin{align*}
        \int_{0}^{\cdot}P_{n}B(s,X_s^{(n_k)})\tilde{P}_n\d W_s \rightarrow  \int_{0}^{\cdot} Z_s\d W_s \  \text{weakly}.
\end{align*}
Hence  $(iii)$ holds.

 Similarly,  by  \eqref{c3} we have
    \begin{align*}
             &\sup_n\E\int_{0}^{T}\int_{D^c} \|P_nf(s,X_{s-}^{(n)},z)\|_H^2\nu(\d z) \d s\\
             \leq &\sup_n\int_{0}^T\left(F_t+ C\|X_s^{(n)}\|_H^2+ \gamma \|X_s^{(n)}\|_V^{\alpha}\right)  \d s
             < \infty,
    \end{align*}
    which yields claim $(iv)$.
\end{proof}

\begin{proof}[\textbf{Proof of Theorem \ref{T2}}]
 \textit{Existence of solutions:}  Let us define a $V^*$-valued process $X$ by \begin{equation}
 X_t:=X_0+\int_0^t Y_s \d
s+\int_0^t Z_s \d W_s+\int_0^t\int_{D^c} F(s,z)\tilde{N}(\d s,\d z)
, \ t\in [0,T].
\end{equation}
By Lemma \ref{pro1}, it is easy to see that $X$ is a $V^*$-valued modification of the $V$-valued process $\bar{X}$,
i.e. $X=\bar{X} \  \d t\times\P$-a.e.. Moreover, we have
$$\sup_{t\in[0,T]}\E\|X_t\|_H^p+\E \int_0^T \|X_t\|_V^\alpha \d t   < \infty.$$
By \cite{[Gyongy-Krylov]}, we infer that $X$ is an $H$-valued c\`{a}dl\`{a}g $\mathbb{F}$-adapted
process  satisfying
\begin{align}\label{Ito}
  \|X_t\|_H^2=&\|X_0\|_H^2+\int_0^t\Big(2 { }_{V^*}\langle  Y_s, \bar{X}_s\rangle_V+\|Z_s\|^2_2\Big)\d s+2\int_0^t\langle \bar{X}_s,Z_s\d W_s\rangle_H\\
  &+2\int_0^t\int_{D^c}\langle \bar{X}_s,F(s,z)\rangle_H\tilde{N}(\d s,\d z)
  +\int_0^t\int_{D^c}\|F(s,z)\|_H^2 N(\d s,\d z).\nonumber
\end{align}

Therefore, in order to prove that $X$ is a solution of \ref{SEE-1} it remains to verify that
\begin{align*}
 & A(\cdot,\bar{X})=Y, \   B(\cdot,\bar{X})=Z, \ \     \d t\times\P -a.e.; \\
\text{and } & f(s,\bar{X}_{s-},z)=F(s, z), \  \ \d t\times\P\times\nu-a.e. .
 \end{align*}
Define
$$ \mathcal{N}=\bigg\{\phi: \phi\ \text{is a}\ V\text{-valued}\ \mathbb{F}\text{-adapted process such that}\   \E\int_0^T\rho(\phi_s) ds<\infty  \bigg\}.   $$
For $  \phi\in K\cap\mathcal{N} \cap L^\infty([0,T]; L^p(\Omega; H))$, by applying the It\^{o}  formula to the process $X^{(n_k)}$, see Schmalfuss \cite[proof of Theorem 4.1]{Schmalfuss_1997} and
 Temam \cite{Temam_2001} (for the deterministic case), we have
\begin{align*}
 &\hspace{-1truecm}\lefteqn{e^{-\int_0^t(K+\rho(\phi_s) )\d
s}\|X_t^{(n_k)}\|_H^2}\\
=&\|X_0^{(n_k)}\|_H^2+\int_0^t
e^{-\int_0^s(K+\rho(\phi_r) )\d r} \bigg(
 2{ }_{V^*}\<A(s,X^{(n_k)}_s), X_{s-}^{(n_k)}  \>_V
 \\
 & +\|P_{n_k}B(s,X_s^{(n_k)})\tilde{P}_{n_k}\|_2^2
 -(K+\rho(\phi_s) )\|X_s^{(n_k)}\|_H^2
 \bigg)\d s \bigg]\\
 &+2\int_0^te^{-\int_0^s(K+\rho(\phi_r) )\d r}\langle X_{s-}^{(n_k)},P_{n_k}B(s, X_s^{(n_k)})\d W_s^{n_k}\rangle_H\\
 &+2 \int_0^t\int_{D^c}e^{-\int_0^s(K+\rho(\phi_r)  )\d r}\langle X_{s-}^{(n_k)},P_{n_k}f(s,X_{s-}^{(n_k)},z)\rangle_H\tilde{N}(\d s, \d z)\\
 &+\int_0^t\int_{D^c}e^{-\int_0^s(K+\rho(\phi_r)  )\d r}\| P_{n_k}f(s,X_{s-}^{(n_k)},z)\|_H^2N(\d s,\d z) .
\end{align*}
 Thus, by taking the expectation of both sides of the above equality and $(H2)$  we get
\begin{align*}
& \E\left( e^{-\int_0^t(K+\rho(\phi_s) )\d
s}\|X_t^{(n_k)}\|_H^2 \right)- \E\left(\|X_0^{(n_k)}\|_H^2\right)\\
\nonumber
 =& \E \bigg[ \int_0^t
e^{-\int_0^s(K+\rho(\phi_r) )\d r} \bigg(
 2{ }_{V^*}\<A(s,X^{(n_k)}_s), X_{s-}^{(n_k)}  \>_V \\
& +\|P_{n_k}B(s,X_s^{(n_k)})\tilde{P}_{n_k}\|_2^2
 -(K+\rho(\phi_s) )\|X_s^{(n_k)}\|_H^2
 \bigg)\d s \bigg]\\ \nonumber
 &+\E\Big[\int_0^t\int_{D^c}e^{-\int_0^s(K+\rho(\phi_r) )\d r}\| P_{n_k}f(s,X_{s-}^{(n_k)},z)\|_H^2\nu(\d z)\d s\Big]\\\nonumber
\leq& \E \bigg[ \int_0^t e^{-\int_0^s(K+\rho(\phi_r)  )\d r} \bigg(
 2{ }_{V^*}\<A(s,X^{(n_k)}_s)-A(s,\phi_s), X_s^{(n_k)}-\phi_s  \>_V \\
 \nonumber
 &+\|B(s,X_s^{(n_k)})-B(s,\phi_s)\|_2^2-(K+\rho(\phi_s))\|X_s^{(n_k)}
 -\phi_s\|_H^2\\
&+\int_{D^c}\|f(s,X_{s}^{(n_k)},z)-f(s,\phi_s,z)\|_H^2\nu(\d z) \bigg) \d s
\bigg]\\ \nonumber
&+\E \bigg[ \int_0^t e^{-\int_0^s(K+\rho(\phi_r)  )\d r} \bigg(
 2{ }_{V^*}\<A(s,X^{(n_k)}_s)-A(s,\phi_s), \phi_s  \>_V + 2{ }_{V^*}\<A(s,\phi_s), X_s^{(n_k)}
 \>_V\\ \nonumber
& -\|B(s,\phi_s)\|_2^2+2\<B(s,X_s^{(n_k)}), B(s,\phi_s)
\>_{\mathcal{T}_2(U,H)}
 -2(K+\rho(\phi_s))\<X_s^{(n_k)}, \phi_s\>_H\\\nonumber
&+(K+\rho(\phi_s) )\|\phi_s\|_H^2
+\int_{D^c} \left( 2\langle f(s,X_{s}^{(n_k)},z),f(s,\phi_s,z)\rangle_H-\|f(s,\phi_s,z)\|_H^2 \right) \nu(\d z)\bigg)\d s
\bigg]\\
\leq&\E \bigg[ \int_0^t e^{-\int_0^s(K+\rho(\phi_r)  )\d r} \bigg(
 2{ }_{V^*}\<A(s,X^{(n_k)}_s)-A(s,\phi_s), \phi_s  \>_V + 2{ }_{V^*}\<A(s,\phi_s), X_s^{(n_k)}
 \>_V\\ \nonumber
& -\|B(s,\phi_s)\|_2^2+2\<B(s,X_s^{(n_k)}), B(s,\phi_s)
\>_{\mathcal{T}_2(U,H)}
 -2(K+\rho(\phi_s))\<X_s^{(n_k)}, \phi_s\>_H\\\nonumber
&+(K+\rho(\phi_s) )\|\phi_s\|_H^2
+\int_{D^c} \left( 2\langle f(s,X_{s}^{(n_k)},z),f(s,\phi_s,z)\rangle_H-\|f(s,\phi_s,z)\|_H^2 \right) \nu(\d z)\bigg)\d s
\bigg].
\end{align*}
Hence  for any nonnegative function $\psi\in L^\infty ([0,T]; \d t)$ we have
 \begin{align}\label{e9}
    &  \E \left[ \int_0^T \psi_t  \left( e^{-\int_0^t(K+\rho(\phi_s) )\d s}\|X_t\|_H^2
- \|X_0\|_H^2\right)  \d t\right] \nonumber\\
     \leq& \liminf_{k\rightarrow\infty}\E  \left[ \int_0^T \psi_t    \left( e^{-\int_0^t(K+\rho(\phi_s) )\d
    s}\|X_t^{(n_k)}\|_H^2 - \|X_0^{(n_k)}\|_H^2\right) \d t \right] \nonumber\\
    \leq& \liminf_{k\rightarrow\infty}\E   \bigg[ \int_0^T \psi_t \bigg( \int_0^t e^{-\int_0^s(K+\rho(\phi_r)  )\d r} \bigg(
     2{ }_{V^*}\<A(s,X^{(n_k)}_s)-A(s,\phi_s), \phi_s  \>_V\nonumber\\
       & + 2{ }_{V^*}\<A(s,\phi_s), X_s^{(n_k)}
     \>_V  -\|B(s,\phi_s)\|_2^2+2\<B(s,X_s^{(n_k)}), B(s,\phi_s)
    \>_{\mathcal{T}_2
(U,H)} \\ \nonumber
    & -2(K+\rho(\phi_s))\<X_s^{(n_k)}, \phi_s\>_H +(K+\rho(\phi_s) )\|\phi_s\|_H^2\\ \nonumber
    & +\int_{D^c} \left( 2\langle f(s,X_{s}^{(n_k)},z),f(s,\phi_s,z)\rangle_H-\|f(s,\phi_s,z)\|_H^2 \right) \nu(\d z)\bigg)\d s
     \bigg)  \d t \bigg]\nonumber\\
    =&\E \bigg[ \int_0^T \psi_t \bigg(  \int_0^t e^{-\int_0^s(K+\rho(\phi_r)  )\d r} \bigg(
     2{ }_{V^*}\<Y_s-A(s,\phi_s), \phi_s  \>_V \nonumber\\ \nonumber
    & + 2{ }_{V^*}\<A(s,\phi_s),\bar{X}_s
     \>_V -\|B(s,\phi_s)\|_2^2+2\<Z_s, B(s,\phi_s)
    \>_{\mathcal{T}_2(U,H)}
     \\\nonumber
    &-2(K+\rho(\phi_s))\<\bar{X}_s, \phi_s\>_H +(K+\rho(\phi_s) )\|\phi_s\|_H^2 \\ \nonumber
    & +\int_{D^c} \left( 2\langle F(s,z),f(s,\phi_s,z)\rangle_H-\|f(s,\phi_s,z)\|_H^2 \right) \nu(\d z)\bigg)\d s \bigg) \d t
    \bigg].
 \end{align}
On the other hand,  by equality \eqref{Ito} we have for  $  \phi\in K\cap\mathcal{M} \cap L^\infty([0,T]; L^p(\Omega; H))$,
\begin{align}\label{e3}
& \E\left( e^{-\int_0^t(K+\rho(\phi_s) )\d s}\|X_t\|_H^2
\right)- \E\left(\|X_0\|_H^2\right)\\ \nonumber
 =& \E \bigg[ \int_0^t
e^{-\int_0^s(K+\rho(\phi_r) )\d r} \bigg(
 2{ }_{V^*}\<Y_s, \bar{X}_s  \>_V  +\|Z_s\|_2^2 \\ \nonumber
& -(K+ \rho(\phi_s) )\|X_s\|_H^2
    +\int_{D^c} \|F(s,z)\|_H^2 \nu(\d z)\bigg)\d s
\bigg].
\end{align}
Combining  (\ref{e3}) with  (\ref{e9})  we have
\begin{align} \label{e30}
& \E\bigg[ \int_0^T \psi_t \bigg( \int_0^t e^{-\int_0^s(K+\rho(\phi_r)  )\d r}\bigg(
 2{ }_{V^*}\<Y_s-A(s,\phi_s), \bar{X}_s- \phi_s  \>_V  \\
&  ~~~~ -(K+\rho(\phi_s) )\|\bar{X}_s- \phi_s\|_H^2  +\|B(s,\phi_s)-Z_s\|_2^2 \nonumber \\
& ~~~~  +\int_{D^c} \|f(s,\phi_s,z)-F(s,z)\|^2_H\nu(\d z)\bigg)\d s \bigg) \d t
 \bigg]\leq 0.  \nonumber
\end{align}
Therefore, if we put $\phi=\bar{X}$ in \eqref{e30},
 we can obtain that $Z=B(\cdot,\bar{X})$ in $J$ and $F(\cdot,\cdot)=f(\cdot,\bar{X}_\cdot,\cdot)$ in $\mathcal{M}$.

Note that (\ref{e30}) also implies that
\begin{align} \begin{split}\label{e31}
 \E\bigg[ \int_0^T \psi_t \bigg( \int_0^t e^{-\int_0^s(K+\rho(\phi_r)  )\d r} &\bigg(
 2{ }_{V^*}\<Y_s-A(s,\phi_s), \bar{X}_s- \phi_s  \>_V \\
&  -(K+\rho(\phi_s) )\|\bar{X}_s- \phi_s\|_H^2  \bigg)\d s\bigg) \d t
 \bigg]\leq 0.
\end{split}
\end{align}

Put $\phi=\bar{X}-\varepsilon\tilde{\phi} v$ in \eqref{e31}  for
$\tilde{\phi}\in L^\infty([0,T]\times\Omega; \d t\times\P;
\mathbb{R})$ and $v\in V$,  divide both sides by $\varepsilon$ and let $\varepsilon\rightarrow 0$. Then we have
 \begin{align*}
       \E\bigg[  \int_0^T \psi_t \bigg( \int_0^t e^{-\int_0^s(K+\rho(\bar{X}_r)  )\d r}\bigg(
     2\tilde{\phi}_s { }_{V^*}\<Y_s-A(s,\bar{X}_s),  v \>_V
 \bigg)\d s \bigg) \d t \bigg]\leq 0.
 \end{align*}
Hence, we infer  $ Y=A(\cdot, \bar{X}) $.

Therefore, we conclude that the process $X=\{X_t\}_{t\geq 0}$ is a solution to
\eqref{SEE-1}. Furthermore, the estimates \eqref{e0} and \eqref{e1}
can be proved for $\{X_t\}$ by the same arguments in Lemmas \ref{L2}
and \ref{R1}.

 \textit{Uniqueness of solutions:}  we finally proceed to show the uniqueness of  solutions to problem (\ref{SEE-1}).

 Suppose that $X=(X_t)$ and $Y=(Y_t)$ are the solutions of (\ref{SEE-1}) with initial conditions
$X_0,Y_0$ respectively, i.e.
 \begin{equation}\begin{split}
 X_t&=X_0+\int_0^t A(s,X_s) \d s+\int_0^t B(s,X_s) \d W_s+\int_0^t\int_{D^c} f(s,X_{s-},z)\tilde{N}(\d s,\d z),
 \ t\in[0,T];\\
  Y_t&=Y_0+\int_0^t A(s,Y_s) \d s+\int_0^t B(s,Y_s) \d W_s+\int_0^t\int_{D^c} f(s,Y_{s-},z)\tilde{N}(\d s,\d z), \ t\in[0,T].
\end{split}
\end{equation}
We define  the following stopping times:
$$\sigma_N:=\inf\{t\in[0,T]: \|X_t\|_H\ge N\}\wedge \inf\{t\in[0,T]: \|Y_t\|_H\ge N\}\wedge T.
$$
Applying again the Schmalfuss \cite{Schmalfuss_1997} trick, by means of the  It\^{o} formula \eqref{Ito}  we have
\begin{align*}
 &e^{-\int_0^{t\wedge\sigma_N}(K+\rho(Y_s) )\d
s}\|X_{t\wedge\sigma_N}-Y_{t\wedge\sigma_N}\|_H^2 - \|X_0-Y_0\|_H^2  \\
=& \int_0^{t\wedge\sigma_N}
e^{-\int_0^s(K+\rho(Y_r) )\d r} \bigg(
 2{ }_{V^*}\<A(s,X_s)-A(s,Y_s), X_s-Y_s  \>_V
 \\
 & +\|B(s,X_s)-B(s,Y_s)\|_2^2
 -(K+\rho(Y_s) )\|X_s-Y_s\|_H^2
 \bigg)\d s \\
 &+2\int_0^{t\wedge\sigma_N}e^{-\int_0^s(K+\rho(Y_r)  )\d r}\langle X_s-Y_s,B(s, X_s)\d W_s-B(s,Y_s)\d W_s\rangle_H\\
 &+2 \int_0^{t\wedge\sigma_N}\int_{D^c}e^{-\int_0^s(K+\rho(Y_r) )\d r}\langle X_s-Y_s,f(s,X_{s-},z)-f(s,Y_{s-},z)\rangle_H\tilde{N}(\d s, \d z)\\
 &+\int_0^{t\wedge\sigma_N}\int_{D^c}e^{-\int_0^s(K+\rho(Y_r) )\d r}\| f(s,X_{s-}, z)-f(s,Y_{s-},z)\|_H^2N(\d s,\d z) .
\end{align*}
It then follows from assumption $(H2)$ that
\begin{align*}
 &\E \left[ e^{-\int_0^{t\wedge \sigma_N}(K+\rho(Y_s)  )\d
s}\|X_t-Y_t\|_H^2 \right] - \E\|X_0-Y_0\|_H^2\\
=& \E\bigg[\int_0^{t\wedge\sigma_N}
e^{-\int_0^s(K+\rho(Y_r) )\d r} \bigg(
 2{ }_{V^*}\<A(s,X_s)-A(s,Y_s), X_s-Y_s  \>_V
 \\
 &+\|B(s,X_s)-B(s,Y_s)\|_2^2
 -(K+\rho(Y_s) )\|X_s-Y_s\|_H^2
 \\
 &+\int_{D^c}\| f(s,X_{s-}, z)-f(s,Y_{s-},z)\|_H^2\nu(\d z)\bigg)\d s \bigg]
 \leq 0 .
\end{align*}
Hence if $X_0=Y_0\  \P \text{-a.s.}$, then
$$ \E\left[ e^{-\int_0^{t\wedge \sigma_N}(K+\rho(Y_s))\d s}\|X_t-Y_t\|_H^2 \right] =0, \  t\in[0,T].   $$
Note that by   (\ref{c5}) and (\ref{e0}) (see Lemma  \ref{L2}) we have
$$ \int_0^T (K+\rho(Y_s))\d s< \infty, \ \P\text{-a.s.}.  $$
Therefore, by letting $N\rightarrow\infty$ (hence $\sigma_N\uparrow T$) we have that $  X_t=Y_t, \ \P\text{-a.s.}, t\in [0,T]. $
Then  the pathwise uniqueness follows from the path c\`{a}dl\`{a}g property of $X,Y$ in $H$.

This completes the proof of Theorem \ref{T2}.
        \end{proof}

  \subsection{With large jumps}

Let $\tau$ be a stopping time such that $\tau<\infty$ a.s.. We  define
\begin{align}
    \begin{split}\label{BM}
    W^{\tau}(t)&=W(t+\tau)-W(\tau); \\
     p^\tau(t)&=p(t+\tau), t\in\mathcal{D}(p^{\tau}),
    \end{split}
    \end{align}
    where $\mathcal{D}(p^{\tau})=\{t\in(0,\infty):  t+\tau\in\mathcal{D}(p)\}$.
 Let $\mathcal{F}^{\tau}_t=\mathcal{F}_{t+\tau}$, $t\in[0, T-\tau]$. The following result is a direct
 extension of Theorem II6.4 and II6.5 in \cite{[Ikeda]}.
\begin{prp}
    The process $W^{\tau}$ defined by \eqref{BM} is a cylindrical $\mathcal{F}_t^{\tau}$-Wiener process and
$p^{\tau}$ is a stationary $\mathcal{F}^{\tau}_t$-Poisson point process with the intensity measure $\nu$.
\end{prp}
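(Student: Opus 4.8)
The plan is to deduce the statement from the classical strong Markov property of finite-dimensional Wiener processes and of Poisson point processes of finite intensity, i.e.\ from (the finite-dimensional, finite-intensity versions of) Theorems~II.6.4 and~II.6.5 in \cite{[Ikeda]}, after two routine reductions: replacing the cylindrical process $W$ by its finite-dimensional projections $\tilde{P}_n W$, and replacing the $\sigma$-finite point process $p$ by its restrictions to sets of finite $\nu$-measure. Throughout one uses that, under the usual conditions on $\mathbb{F}$ and since $\tau$ is an $\mathbb{F}$-stopping time, $\tau+s$ is again an $\mathbb{F}$-stopping time for every $s\ge 0$ (because $\{\tau+s\le t\}=\{\tau\le t-s\}\in\mathcal{F}_{t-s}\subset\mathcal{F}_t$), and that $\mathcal{F}^\tau_t=\mathcal{F}_{t+\tau}$ is indeed a filtration with respect to which $W^\tau_t$ and $N_{p^\tau}((0,t]\times A)$ are measurable.

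First I would treat the cylindrical Wiener process. Fix $n\in\mathbb{N}$ and let $\tilde{P}_n$ be the orthogonal projection of $U$ onto $\mathrm{span}\{g_1,\dots,g_n\}$; then $\tilde{P}_n W$ is a genuine $\tilde{P}_n U$-valued $\mathbb{F}$-Wiener process with covariance $\tilde{P}_n$, and $\tilde{P}_n W^\tau(t)=\tilde{P}_n W(t+\tau)-\tilde{P}_n W(\tau)$. Applying Theorem~II.6.4 of \cite{[Ikeda]} to $\tilde{P}_n W$ at the stopping time $\tau$ gives that $\tilde{P}_n W^\tau$ is an $\mathcal{F}^\tau_t$-Wiener process in $\tilde{P}_n U$ with the same covariance; equivalently, for $t>s$ the increment $\tilde{P}_n W^\tau(t)-\tilde{P}_n W^\tau(s)$ is centred Gaussian with covariance $(t-s)\tilde{P}_n$ and is independent of $\mathcal{F}^\tau_s=\mathcal{F}_{s+\tau}$, which is the strong Markov property of $\tilde{P}_n W$ at the stopping time $\tau+s$. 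Since these finite-dimensional statements are consistent under the projections and hold for all $n$, the process $W^\tau$ satisfies the defining properties of a cylindrical $\mathcal{F}^\tau_t$-Wiener process on $U$.

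Next, for the Poisson point process I would use $\sigma$-finiteness: fix an increasing sequence $\{D_n\}\subset\mathcal{Z}$ with $\bigcup_n D_n=Z$ and $\nu(D_n)<\infty$, and let $p^{(n)}$ be the point process keeping only those jumps of $p$ with values in $D_n$, which is a finite stationary $\mathbb{F}$-Poisson point process with intensity $\nu|_{D_n}$. Theorem~II.6.5 of \cite{[Ikeda]} (the strong Markov property for Poisson point processes) then yields that the shift $(p^{(n)})^\tau$, which coincides with the restriction of $p^\tau$ to $D_n$, is a stationary $\mathcal{F}^\tau_t$-Poisson point process of unchanged intensity $\nu|_{D_n}$ and is independent of $\mathcal{F}_\tau$. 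Letting $n\to\infty$, using $\bigcup_n D_n=Z$ and a monotone class argument, $N_{p^\tau}$ is a Poisson random measure relative to $\mathcal{F}^\tau_t$, and its intensity is $\nu$ --- either directly from Theorem~II.6.5, or from $\E N_{p^\tau}((0,t]\times A)=\E N_p((\tau,\tau+t]\times A)=t\nu(A)$ via optional sampling of the martingale $s\mapsto N_p((0,s]\times A)-s\nu(A)$ at $\tau$ and $\tau+t$ when $\nu(A)<\infty$, and by monotone convergence for general $A$. By Remark~\ref{rem-1}, $p^\tau$ is then a stationary $\mathcal{F}^\tau_t$-Poisson point process with intensity $\nu$.

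I do not expect a genuine obstacle: the content is just the strong Markov property, and the only points requiring care are the two reductions above and the bookkeeping that $\tau+s$ is a stopping time and $\mathcal{F}^\tau_t$ the correctly shifted filtration, so that Theorems~II.6.4 and~II.6.5 may be invoked at the stopping times $\tau$ and $\tau+s$ rather than at deterministic times. If one additionally needs $W^\tau$ and $p^\tau$ to be \emph{jointly} independent of $\mathcal{F}_\tau$, as is implicitly used in the interlacing procedure of Section~3.2, this follows from the same argument applied to the pair $(W,p)$, provided $W$ and $p$ are independent.
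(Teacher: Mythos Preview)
Your proposal is correct and is precisely the ``direct extension of Theorem II.6.4 and II.6.5 in \cite{[Ikeda]}'' that the paper asserts without supplying details; the paper gives no proof of this proposition beyond that reference. Your two reductions (finite-dimensional projections for the cylindrical Wiener process, restriction to sets of finite $\nu$-measure for the $\sigma$-finite point process) are exactly the routine steps needed to pass from the finite-dimensional/finite-intensity setting of Ikeda--Watanabe to the present one, and your closing remark on joint independence of $W^\tau$, $p^\tau$ from $\mathcal{F}_\tau$ correctly anticipates what is used in Corollary~\ref{c6}.
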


Clearly $W^{\tau}$ is independent of $\mathcal{F}_{\tau}$ and $W^{\tau}, p^{\tau}$ enjoy the same properties as $W, p$.

\begin{cor}\label{c6} Let $\tau$ be  an $[0,T]$-valued stopping time on  and $X_{\tau}$ be an $\mathcal{F}_{\tau}$-measurable random variable.
Under the assumptions of Theorem \ref{T1}, there exists a
unique c\`{a}dl\`{a}g $H$-valued $\mathbb{F}$-adapted process
$(X_t)$ and a process $\bar{X}\in L^\alpha([\tau,T]; V)\cap
L^2([\tau,T]; H)$, $\mathbb{P}$-a.s.
 which is $\d t\times\mathbb{P}$-equivalent to $X$ such that the equality holds $\mathbb{P}$-a.s.:
          \begin{align}\label{SE1}
                      X_t=X_{\tau}+\int_{\tau}^tA(s,\bar{X}_s)\d s+\int_{\tau}^t B(s,\bar{X}_s)\d W_s+\int_{\tau}^t\int_{D^c} f(s,\bar{X}_{s-},z)\tilde{N}(\d s,\d z),\ t\in[\tau,T].
          \end{align}
    Moreover, if $X_\tau\in L^{\beta+2}(\Omega,\mathcal{F}_\tau, \P; H)$, then we have
$$\bar{X}\in L^\alpha([\tau,T]\times \Omega, \d t \times\P; V)\cap L^{\beta+2}([\tau,T]\times \Omega, \d t \times\P; H).$$
\end{cor}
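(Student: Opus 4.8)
The plan is to deduce Corollary~\ref{c6} from Theorem~\ref{T2} by a time shift, and then to dispose of the integrability restriction on the initial datum by a localisation argument.

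First I would pass to the shifted stochastic basis. For $(t,\omega)\in[0,T]\times\Omega$, $v\in V$ and $z\in Z$ set
\begin{equation*}
\hat{A}(t,\omega,v)=A\big((t+\tau(\omega))\wedge T,\omega,v\big),\quad \hat{B}(t,\omega,v)=B\big((t+\tau(\omega))\wedge T,\omega,v\big),\quad \hat{f}(t,\omega,v,z)=f\big((t+\tau(\omega))\wedge T,\omega,v,z\big),
\end{equation*}
and $\hat{F}_t=F_{(t+\tau)\wedge T}$. Since $\mathcal{F}^\tau_0=\mathcal{F}_\tau$, the map $(t,\omega)\mapsto(t+\tau(\omega))\wedge T$ is $\mathcal{B}\mathcal{F}^\tau$-measurable, so $\hat{A},\hat{B}$ are $\mathcal{B}\mathcal{F}^\tau\otimes\mathcal{B}(V)$-measurable and $\hat{f}$ is $\mathcal{P}^\tau\otimes\mathcal{B}(V)\otimes\mathcal{Z}$-measurable, where $\mathbb{F}^\tau=(\mathcal{F}^\tau_t)$. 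Because the clamping is merely a time reparametrisation and $(H1)$--$(H4)$, \eqref{c3}, \eqref{c4}, \eqref{c5} are assumed for every $(t,\omega)$, they persist for $\hat{A},\hat{B},\hat{f}$ with unchanged constants $\alpha,\beta,\theta,C,\gamma$ and the same $\rho$; a change of variables in time gives $\hat{F}\in L^{(\beta+2)/2}([0,T]\times\Omega;\d t\times\P)$. By the Proposition preceding the Corollary, $W^\tau$ is a cylindrical $\mathcal{F}^\tau_t$-Wiener process and $p^\tau$ a stationary $\mathcal{F}^\tau_t$-Poisson point process with intensity $\nu$, so Theorem~\ref{T2} applies on $(\Omega,\P,\mathbb{F}^\tau,\mathcal{F})$ to the data $(\hat{A},\hat{B},\hat{f},W^\tau,p^\tau)$.

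Assume first that $X_\tau\in L^{\beta+2}(\Omega,\mathcal{F}_\tau,\P;H)=L^{\beta+2}(\Omega,\mathcal{F}^\tau_0,\P;H)$. Theorem~\ref{T2} (including its a priori bound) then produces a unique c\`{a}dl\`{a}g $\mathbb{F}^\tau$-adapted $H$-valued process $(\hat{X}_t)_{t\in[0,T]}$, whose $\d t\times\P$-equivalence class lies in $L^\alpha([0,T]\times\Omega;V)\cap L^{\beta+2}([0,T]\times\Omega;H)$, solving the shifted equation started from $X_\tau$ at time $0$. On the $\omega$-wise interval $[0,T-\tau]$ the clamping is inactive, so undoing the shift by $X_t:=\hat{X}_{t-\tau}$ for $t\in[\tau,T]$ and using the translation identities for $W^\tau$ and $p^\tau$ from the Proposition yields a c\`{a}dl\`{a}g $\mathbb{F}$-adapted process solving \eqref{SE1}, with $\bar{X}\in L^\alpha([\tau,T]\times\Omega;V)\cap L^{\beta+2}([\tau,T]\times\Omega;H)$; this is exactly the ``Moreover'' assertion. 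Uniqueness on $[\tau,T]$ follows by reversing the correspondence and invoking the uniqueness part of Theorem~\ref{T2}.

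For a general $\mathcal{F}_\tau$-measurable $X_\tau$ (finite $\P$-a.s.) I would localise: with $\Omega_n:=\{\|X_\tau\|_H\le n\}\in\mathcal{F}_\tau$, the previous step applied to $X_\tau 1_{\Omega_n}\in L^\infty(\Omega,\mathcal{F}_\tau,\P;H)$ gives solutions $X^{(n)}$. Running the uniqueness computation of Theorem~\ref{T2} with the factor $1_{\Omega_n}$ inserted into the It\^{o} formula (the coefficients being local in $\omega$) shows $X^{(n)}=X^{(m)}$ on $\Omega_n$ for $m\ge n$; hence $X_t:=X^{(n)}_t$ on $\Omega_n$ is well defined on $\bigcup_n\Omega_n=\Omega$ up to a $\P$-null set, is c\`{a}dl\`{a}g and $\mathbb{F}$-adapted, has $\bar{X}\in L^\alpha([\tau,T];V)\cap L^2([\tau,T];H)$ $\P$-a.s., solves \eqref{SE1}, and is unique in this class by the same localisation. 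The step I expect to cost the most effort is the bookkeeping around the time shift: verifying the adaptedness of the clamped coefficients to $\mathbb{F}^\tau$, transporting the a priori estimates across the random terminal time $T-\tau$, and checking that the uniqueness argument genuinely localises in $\omega$.
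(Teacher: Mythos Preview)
Your argument is correct and shares the paper's core idea---reduce to Theorem~\ref{T2} via the time-shifted filtration $\mathbb{F}^\tau$ and the Proposition on $W^\tau,p^\tau$---but you handle the initial condition differently. The paper first fixes a deterministic $X_\tau=h\in H$, applies Theorem~\ref{T2} on the shifted basis (redoing the proof with conditional expectations against $\mathcal{F}_\tau$), and then invokes the classical ``freezing'' device: asserting that $h\mapsto X^{\tau,h}$ is measurable and that $X_\tau$ is independent of $(W^\tau,p^\tau)$, so one may substitute $h\leadsto X_\tau$ directly. You instead exploit that Theorem~\ref{T2} already accepts random $\mathcal{F}_0^\tau$-measurable initial data in $L^{\beta+2}$, apply it in one stroke when $X_\tau\in L^{\beta+2}$, and then localise on $\Omega_n=\{\|X_\tau\|_H\le n\}\in\mathcal{F}_\tau$ to reach an arbitrary $\mathcal{F}_\tau$-measurable $X_\tau$. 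Your route is arguably more self-contained: it sidesteps the measurable-dependence-on-$h$ claim (which the paper states without justification) and the independence argument, replacing them with a standard $\omega$-wise patching that works because $\Omega_n\in\mathcal{F}_0^\tau$ and the uniqueness proof in Theorem~\ref{T2} is local in $\omega$. The paper's route is shorter on paper but leans on a parametric measurability fact that would itself require some work to prove rigorously.
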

\begin{proof} We first assume $X_\tau=h\in H$, then it is obvious that $X_\tau\in L^{\beta+2}(\Omega,\mathcal{F}_\tau, \P; H)$.

Let $N^{\tau}$ be the compensated Poisson random measure associated to the Poisson point process $p^{\tau}$.
 As an immediate consequence of Theorem \ref{T2}, there exists a unique  $(\mathcal{F}^{\tau}_t)$-adapted
 $H$-valued c\`{a}dl\`{a}g  process $X^{\tau,h}$ such that
    \begin{align*}
        X^{\tau,h}_t=& h+ \int_{0}^tA(s+\tau,\bar{X}_s^{\tau,h})ds+\int_{0}^tB(s+\tau,\bar{X}_s^{\tau,h})\d W^{\tau}_s\\
&+\int_{0}^t\int_{D^c}f(s+\tau,\bar{X}_{s-}^{\tau,h},z)\tilde{N}^{\tau}(\d
s,\d z), \   t\in[0, T-\tau],
    \end{align*}
where as before $\bar{X}^{\tau,h}$ is the $\d t\times\mathbb{P}$-equivalent class of $X^{\tau,h}$.
 Indeed, this follows along the same lines of the proof of Theorem \ref{T2} in such a way that all computations
 involving the expectations are replaced by conditional expectations with respect to $\mathcal{F}_{\tau}$.

 Since for any $h\in H$,  the solution $X^{\tau,h}_t$ is a measurable function of $h$, by replacing $h$ with the $\mathcal{F}_\tau$-measurable
random variable $X_\tau$, where $X_{\tau}$, $W^{\tau}$ and $p^{\tau}$ are mutually independent, we obtain an unique solution $X^{\tau}$ satisfying
       \begin{align*}
        X^{\tau}_t=& X_{\tau}+ \int_{0}^tA(s+\tau,\bar{X}_s^{\tau})ds+\int_{0}^tB(s+\tau,\bar{X}_s^{\tau})\d W^{\tau}_s\\
&+\int_{0}^t\int_{D^c}f(s+\tau,\bar{X}_{s-}^{\tau},z)\tilde{N}^{\tau}(\d
s,\d z), \   t\in[0, T-\tau].
    \end{align*}
 Set $X_t:=X_{t-\tau}^{\tau}$ and $\bar{X}_t:=\bar{X}^{\tau}_{t-\tau}$, for $t\in[\tau,T]$,  then
it is straightforward to see that $X$ satisfies Equation \eqref{SE1} with the initial condition $X_\tau$.
\end{proof}

For convenience, we use   $X'_{\tau,t}(\xi)$, $t\in[\tau, T]$ to
denote the solution to Equation \eqref{SE1} on $[\tau,T]$ with
initial condition $\xi$ at time $\tau$ and $X_{0,t}(x)$, $t\in[0,T]$
to denote the solution to equation \eqref{SEE} on $[0,T]$ with
initial condition $x$ at time $0$.

 Theorem \ref{T2} tells us that equation (\ref{SEE-1})  with initial condition $x$ at time  $0$ has a unique $H$-valued
c\`{a}dl\`{a}g solution $X':=(X'_{0,t}(x))_{t\in[0,T]}$ on $[0,T]$, that is
\begin{align*}
    X'_{0,t}(x)=&x+\int_0^tA(s,\bar{X}'_{0,s}(x))\d s+\int_0^tB(s,\bar{X}'_{0,s}(x)) \d W_s\\
& +\int_0^t\int_{D^c}f(s,\bar{X}'_{0,s-}(x),z)\tilde{N}(\d s, \d z), \  t\in[0,T].
\end{align*}
 Here $\bar{X}'_{0,\cdot}(x)\in L^{\alpha}([0,T]\times\Omega,\d t\times\mathbb{P};V)\cap L^2([0,T]\times\Omega, \d t\times\mathbb{P};H)$
 and it is $\d t\times\mathbb{P}$-equivalent to $X'_{0,\cdot}(x)$.
Recall that $\{\tau_n\}$ are the arrival times for the jumps of the Poisson process $N(t,D)$, $t\in[0,T]$.
Now we may construct a solution to \eqref{SEE} on $[0,\tau_1]$ as follows:
\begin{align*}
     X_{0,t}(x)=\left\{\begin{array}{cc}
                                    X'_{0,t}(x), & \text{for } 0\leq t<\tau_1;\\
                                    X'_{0,\tau_1-}(x)+g(\tau_1,\bar{X}'_{0,\tau_1-}(x),p(\tau_1)), & \text{for } t=\tau_1.
     \end{array}
    \right.
\end{align*}
We note that since the process $X_{0,t}'(x), t\in[0,T]$ has no jumps
occurring at time $\tau_1$, we infer
$X_{0,\tau_1-}(x)=X'_{0,\tau_1-}(x)=X'_{0,\tau_1}(x)$. Set
$\bar{X}_{0,t}(x)=\bar{X}'_{0,t}(x)$ on $[0,\tau_1)$ and
$\bar{X}_{0,\tau_1}(x)=\bar{X}'_{0,\tau_1-}(x)+g(\tau_1,\bar{X}'_{0,\tau_1-}(x),p(\tau_1))$.
It clearly forces that $\bar{X}_{0,t}(x)$ is $\d
t\times\mathbb{P}$-equivalent to $X_{0,t}(x)$ on $[0,\tau_1]$. Hence
we have
 \begin{align*}
    X_{0,\tau_1}(x)&= X'_{0,\tau_1-}(x)+g(\tau_1,\bar{X}'_{0,\tau_1-}(x),p(\tau_1))\\
&= x+\int_0^{\tau_1}A(s,\bar{X}_{0,s}(x))\d s+\int_0^{\tau_1}B(s,\bar{X}_{0,s}(x))\d W_s\\
&\hspace{1cm}+\int_0^{\tau_1}\int_{D^c}f(s,\bar{X}_{0,s-}(x),z)\tilde{N}(\d s,\d z) +g(\tau_1,\bar{X}_{0,\tau_1-}(x),p(\tau_1)).
 \end{align*}
Also, since $\tau_1$ is the first jump time of the process $N(t,D)$, $t\in[0,T]$, we infer that
\begin{align*}
          \int_{0}^t\int_{D}g(s,\bar{X}'_{0,s-}(x),z)N(\d s,\d z)=\left\{
    \begin{array}{cc}
                    0, & \ t\in[0,\tau_1),   \\
                     g(\tau_1,\bar{X}'_{0,\tau_1-}(x),p(\tau_1)), &\ t\in[\tau_1,\tau_2).
    \end{array}
    \right.
\end{align*}
It follows that for $t\in[0,\tau_1]$ we have
\begin{align*}
     X_{0,t}(x)=&  x+\int_0^{t}A(s,\bar{X}_{0,s}(x))\d s+\int_0^{t}B(s,\bar{X}_{0,s}(x))\d W_s\\
    &+\int_0^{t}\int_{D^c}f(s,\bar{X}_{0,s-}(x),z)\tilde{N}(\d s, \d z)+ \int_{0}^{t}\int_{D}g(s,\bar{X}_{0,s-}(x),z)N(\d s,\d z),
\end{align*}
which shows that
the process $X_{0,t}(x)$ is an $H$-valued solution to the equation \eqref{SEE} on $[0,\tau_1]$.

Since the valued of $g(\cdot,X,\cdot)$ at time $\tau_1$ depends only on the valued of $X_{\tau_1-}$ strictly prior to the time $\tau_1$,
the uniqueness of the solution $X'_{0,t}(x)$ on $[0,\tau_1)$ implies the uniqueness of the solution $X_{0,t}(x)$ on $[0,\tau_1]$.

 According to Corollary \ref{c6}, let us denote $X'_{\tau_1,t}(X_{0,\tau_1}(x))$  the unique solution to the equation \eqref{SEE-1} with initial condition
 $X_{0,\tau_1}(x)$ at time $\tau_1$,  then  there exists a $\d t\times\mathbb{P}$-equivalent class $\bar{X}'_{\tau_1,t}(X_{0,\tau_1}(x))$, $t\in[\tau_1,T]$ satisfying
    \begin{align*}
        X'_{\tau_1,t}(X_{0,\tau_1}(x))=    X_{0,\tau_1}(x)&+\int_{\tau_1}^tA(s,\bar{X}'_{\tau_1,s}(X_{0,\tau_1}(x)))\d s+\int_{\tau_1}^tB(s,\bar{X}'_{\tau_1,s}(X_{0,\tau_1}(x)))\d W_s\\
        &+\int_{\tau_1}^t\int_{D^c}f(s,\bar{X}'_{\tau_1,s-}(X_{0,\tau_1}(x)),z)\tilde{N}(\d s,\d z),\ \ t\in[\tau_1,T].
    \end{align*}
 We define
\begin{align*}
       X_{0,t}(x) =\left\{\begin{array}{cc}
                                    X_{0,t}(x), & \text{for } 0\leq t\leq \tau_1;\\
                                    X'_{\tau_1,t}(X_{0,\tau_1}(x)), & \text{for } \tau_1<t<\tau_2;\\
                                    X'_{\tau_1,\tau_2-}(X_{0,\tau_1}(x))+g(\tau_2,\bar{X}'_{\tau_1,\tau_2-}(X_{0,\tau_1}(x)),p(\tau_2)),  & \text{for } t=\tau_2,
     \end{array}
     \right.
\end{align*}
and
                   \begin{align*}
    \bar{X}_{0,t}(x) =\left\{
    \begin{array}{cc}
                \bar{X}_{0,t}(x), & \text{ for } 0\leq t\leq \tau_1;; \\
        \bar{X}'_{\tau_1,t}(X_{0,\tau_1}(x)), & \text{ for } \tau_1<t<\tau_2\\
        \bar{X}'_{\tau_1,\tau_2-}(X_{0,\tau_1}(x))+g(\tau_2,\bar{X}'_{\tau_1,\tau_2-}(X_{0,\tau_1}(x)),p(\tau_2)),  & \text{for } t=\tau_2,
    \end{array}
    \right.
\end{align*}
Clearly, $\bar{X}_{0,s}(x)=X_{0,s}(x), \d t
\times\mathbb{P}$ on $[0,\tau_2]$.
Then we have for $t\in(\tau_1,\tau_2)$,
\begin{align*}
      X_{0,t}(x)=& X'_{\tau_1,t}(X_{0,\tau_1}(x))\\
    =&x+\int_0^{t}A(s,\bar{X}_{0,s}(x))\d s+\int_0^{t}B(s,\bar{X}'_{0,s}(x))\d W_s\\
        &+\int_0^{t}\int_{D^c}f(s,\bar{X}_{0,s-}(x),z)\tilde{N}(\d s,\d z)+\int_{0}^t\int_{D}g(s,\bar{X}_{0,s-}(x),z)N(\d s,\d z).
\end{align*}

 As we known that $X_{\tau_1,\tau_2-}'(X_{0,\tau_1}(x))=X'_{\tau_1,\tau_2}(X_{0,\tau_1}(x))$, a similar argument as above gives
\begin{align*}
     X_{0,\tau_2}(x) =&x+\int_0^{\tau_2}A(s,\bar{X}_{0,s}(x))\d s+\int_0^{\tau_2}B(s,\bar{X}_{0,s}(x))\d W_s\\
&+\int_0^{\tau_2}\int_{D^c}f(s,\bar{X}_{0,s-}(x),z)\tilde{N}(\d s,\d z)
        +\int_{0}^{\tau_2}\int_{D}g(s,\bar{X}_{0,s-}(x),z)N(\d s,\d z).
\end{align*}
In particular,
\begin{align*}
       \int_{0}^{\tau_2}\int_{D}g(s,\bar{X}_{0,s-}(x),z)N(\d s,\d z)&=g(\tau_1,\bar{X}_{0,\tau_1-},p(\tau_1))+ g(\tau_2,\bar{X}_{0,\tau_2-},p(\tau_2))\\
    &=g(\tau_1,\bar{X}'_{0,\tau_1-}(x),p(\tau_1))+ g(\tau_2,\bar{X}'_{\tau_1,\tau_2-}(X_{0,\tau_1}(x)),p(\tau_2)) .
\end{align*}
Therefore, $X_{0,t}(x)$ is a solution of  \eqref{SEE} on $[0,\tau_2]$ and the uniqueness of the solution  on $[0,\tau_2]$
follows from the uniqueness of the solutions $X'_{0,t}(x)$ and $X'_{\tau_1,t}(X_{0,\tau_1})(x)$.

By using this type of interlacing structure, one can  construct a unique solution recursively to the equation \eqref{SEE}
in the time interval $[0,\tau_n]$ for every $n\in\mathbb{N}$.

Now the proof of Corollary \ref{c6} is complete.
\qed

 \section{Application and Examples}
 \label{sec-appl}

Theorem \ref{T1} gives a unified framework for a very large class of
SPDE driven by general L\'{e}vy noise, which  generalizes both the
classical results in \cite{KR79,Par75,PR07} and the recent results in
\cite{CM10,Liu+Roc}.  Within this framework,  the issue of the existence
and  uniqueness of solutions to a large class of  stochastic
evolution equations with monotone coefficients (cf. \cite{PR07,KR79}
for the stochastic porous medium equation and stochastic $p$-Laplace
equation) and with locally monotone coefficients (cf.
\cite{Liu+Roc,CM10} for stochastic  Burgers  type equations,
stochastic 2D Navier-Stokes equations and many other stochastic
hydrodynamical systems)  driven by more general L\'{e}vy processes
instead of Wiener processes can be treated.

For the simplicity of  notation  we use  $D_i$ to denote the spatial derivative $\frac{\partial}{\partial x_i}$, and
$\Lambda \subseteq \mathbb{R}^d$ is an open bounded domain with smooth
boundary.
 For the standard Sobolev space $W_0^{1,p}(\Lambda)$ $(p\ge 2)$  we always use the following (equivalent) Sobolev norm:
$$    \|u\|_{1,p}:=\left(\int_\Lambda |\nabla u(x)|^p d x\right)^{1/p}.    $$
For $d=2$, we recall
the following well-known estimate on $\mathbb{R}^2$ (cf. \cite{Temam_2001}\dela{{Liu+Roc}}):
\begin{equation}\label{2d}
  \|u\|_{L^4}^4 \le  C   \|u\|_{L^2}^2  \|\nabla u\|_{L^2}^2, \ u\in W_0^{1,2}(\Lambda) .
\end{equation}
We also  recall  the following estimate on $\mathbb{R}^3$ (cf. \cite{Temam_2001,MS02}):
\begin{equation}\label{e4}
  \|u\|_{L^4}^4 \le C  \|u\|_{L^2}  \|\nabla u\|_{L^2}^3, \  u\in W_0^{1,2}(\Lambda),
\end{equation}

We  first recall  the following  lemma in \cite{R13}, which partially generalizes the result in  \cite[Lemma 3.1]{Liu+Roc}.

\begin{lem}\label{L3.1}
Consider the Gelfand triple
$$ V:=W_0^{1,2}(\Lambda)\subset H:=L^2(\Lambda) \subset  \left(W^{1,2}_0(\Lambda)\right)^\ast = V^\ast.$$
and the operator
$$ A(u)=\Delta u+ \langle f(u),\nabla u\rangle,$$
where $f=(f_1,\dots,f_d):\mathbb{R}^d\to \mathbb{R}^d$ is a Lipschitz function and $\langle\; ,\;\rangle$ denotes inner product in $\R^d$.
Let $\text{Lip}(f)$ denote the corresponding Lipschitz constant.

$(1)$ If $d\le 4$, there exists $C\in]0,\infty[$ such that for all $u,v,w\in V$
$$\int_\Lambda |u||\nabla w| |v| \d x \le C \|u\|_V \|w\|_V \|v\|_V$$
In particular $A :V\to V^\ast$ is well defined. Furthermore, if $d=1$ or $f$ is bounded, $A$ satisfies $(H4)$ with $\alpha = 2$ and $\beta=2$ or $\beta=0$, respectively.

$(2)$ If $d=1$ or if each $f_i$ is bounded and $d=2$, then there exists $C\in (0,\infty)$ such that
$$2 { }_{V^*}\<A(u)-A(v), u-v\>_V    \le - \|u-v\|_V^2+  \left(C + C\|v\|_V^2  \right)\|u-v\|_H^2,\ u,v\in V.$$

$(3)$ If $f_i$ are bounded and independent of $u$ for $i=1,\cdots,d$, i.e.
$$ A(u)=\Delta u+ \langle f, \nabla u\rangle,  $$
then for any $d\ge 1$ we have
$$2 { }_{V^*}\<A(u)-A(v), u-v\>_V
   \le - \|u-v\|_V^2+  K\|u-v\|_H^2,\ u,v\in V.$$
\end{lem}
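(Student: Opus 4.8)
The plan is to verify the three estimates in Lemma \ref{L3.1} by direct computation, using the structure of $A(u) = \Delta u + \langle f(u), \nabla u\rangle$ together with the Gagliardo--Nirenberg/Sobolev inequalities \eqref{2d}, \eqref{e4} and the Lipschitz property of $f$. The key reduction throughout is that $\Delta$ contributes the good term: for $u\in V$ one has ${ }_{V^*}\<\Delta u, u\>_V = -\|u\|_V^2$ with our choice of norm $\|u\|_V = \|\nabla u\|_{L^2}$, so everything reduces to controlling the first-order term $\langle f(u),\nabla u\rangle$.

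For part $(1)$ I would first establish the trilinear estimate $\int_\Lambda |u|\,|\nabla w|\,|v|\,\d x \le C\|u\|_V\|w\|_V\|v\|_V$. By H\"older's inequality this integral is bounded by $\|u\|_{L^4}\|\nabla w\|_{L^2}\|v\|_{L^4}$, and for $d\le 4$ the Sobolev embedding $W_0^{1,2}(\Lambda)\hookrightarrow L^4(\Lambda)$ gives $\|u\|_{L^4}\le C\|u\|_V$ (on a bounded domain the full $H^1_0$ norm is equivalent to $\|\nabla\cdot\|_{L^2}$ by Poincar\'e), which yields the claim; well-definedness of $A:V\to V^*$ follows since $|{ }_{V^*}\<A(u),v\>_V| \le \|u\|_V\|v\|_V + \int_\Lambda |f(u)|\,|\nabla u|\,|v|\,\d x$ and $|f(u)|\le |f(0)| + \mathrm{Lip}(f)|u|$. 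For the $(H4)$ claim: when $f$ is bounded, $\|\langle f(u),\nabla u\rangle\|_{V^*} = \sup_{\|v\|_V\le 1}\int \langle f(u),\nabla u\rangle v \le \|f\|_\infty\|u\|_V\|v\|_{L^2}\cdot\mathrm{const}$, giving $\|A(u)\|_{V^*}\le C\|u\|_V$, i.e. $(H4)$ with $\alpha=2,\beta=0$; when $d=1$, use $W_0^{1,2}\hookrightarrow L^\infty$ to bound $\int f_i(u) D_i u\, v$ crudely and extract $\|A(u)\|_{V^*}^2 \le C(\ldots)(1+\|u\|_H^2)$, i.e. $\beta=2$.

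For parts $(2)$ and $(3)$ the core is the monotonicity-type estimate for the difference. Writing $A(u)-A(v) = \Delta(u-v) + \langle f(u),\nabla u\rangle - \langle f(v),\nabla v\rangle$, I would add and subtract $\langle f(u),\nabla v\rangle$ to split the first-order difference as $\langle f(u),\nabla(u-v)\rangle + \langle f(u)-f(v),\nabla v\rangle$. Pairing with $u-v$ in ${ }_{V^*}\<\cdot,\cdot\>_V$: the $\Delta$ term gives $-\|u-v\|_V^2$; the term $\int \langle f(u),\nabla(u-v)\rangle (u-v)\,\d x$ is handled by the identity $\int \langle f(u),\nabla w\rangle w = \tfrac12 \int \langle f(u),\nabla(w^2)\rangle = -\tfrac12\int (\mathrm{div}\, f(u))\, w^2$ (integrating by parts, boundary terms vanish since $w = u-v\in W_0^{1,2}$), which is estimated via boundedness of $f$ and chain rule by $C\int |\nabla u|\,|w|^2 \le C\|\nabla u\|_{L^2}\|w\|_{L^4}^2$, and then \eqref{2d} bounds $\|w\|_{L^4}^2 \le C\|w\|_{L^2}\|\nabla w\|_{L^2}$ so this is $\le C\|u\|_V\|w\|_H\|w\|_V \le \tfrac12\|w\|_V^2 + C\|u\|_V^2\|w\|_H^2$ by Young's inequality (for $d=2$; for $d=1$ one uses $\|w\|_{L^\infty}^2\le C\|w\|_H\|w\|_V$ similarly); the cross term $\int \langle f(u)-f(v),\nabla v\rangle(u-v)\,\d x \le \mathrm{Lip}(f)\int |u-v|^2|\nabla v| = \mathrm{Lip}(f)\|w\|_{L^4}^2\|\nabla v\|_{L^2}$, again $\le \tfrac12\|w\|_V^2 + C\|v\|_V^2\|w\|_H^2$ by \eqref{2d} and Young. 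Summing yields $2{ }_{V^*}\<A(u)-A(v),u-v\>_V \le -\|w\|_V^2 + (C + C\|v\|_V^2)\|w\|_H^2$ as claimed; for part $(3)$, $f$ is independent of $u$ and bounded so the cross term disappears entirely and $\mathrm{div}\,f$ is merely a bounded (or fixed) function, so only the first mechanism is needed and it produces $K\|w\|_H^2$ with no $\|v\|_V^2$ factor and no dimension restriction. The main obstacle is bookkeeping: making sure the borrowed fractions from Young's inequality never exceed the single $\|w\|_V^2$ coming from $\Delta$, and tracking carefully that all integration-by-parts boundary terms vanish because $u-v$ (and $v^2$, etc.) lie in $W_0^{1,2}(\Lambda)$; the dimension restrictions ($d\le 4$ in $(1)$, $d=2$ with bounded $f_i$ in $(2)$) are exactly what is needed for \eqref{2d} and the Sobolev embeddings to close the estimates.
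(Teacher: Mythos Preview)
Your outline for parts $(1)$ and $(3)$ is essentially correct and matches the paper. One small caution on $(3)$: the $f_i$ are only assumed bounded measurable, so $\mathrm{div}\,f$ need not make sense; but you do not actually need it, since the direct Cauchy--Schwarz bound $\left|\int\langle f,\nabla w\rangle w\right|\le \|f\|_\infty\|w\|_V\|w\|_H$ already closes the estimate without any integration by parts.

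Part $(2)$, however, has a genuine gap. After your split you integrate by parts in $\int_\Lambda\langle f(u),\nabla w\rangle\, w\,\d x$ to get $-\tfrac12\int_\Lambda\mathrm{div}(f(u))\,w^2\,\d x$, and since $\mathrm{div}(f(u))=\sum_i f_i'(u)\partial_i u$ this yields the bound $C\|u\|_V\|w\|_H\|w\|_V\le \tfrac12\|w\|_V^2 + C\|u\|_V^2\|w\|_H^2$. But the lemma requires $C\|v\|_V^2$ on the right, not $C\|u\|_V^2$; your ``summing yields'' line silently replaces one by the other. You cannot repair this by writing $\|u\|_V\le\|v\|_V+\|w\|_V$, because the resulting term $\|w\|_V^2\|w\|_H^2$ is cubic in $w$ and cannot be absorbed into the single good term $-\|w\|_V^2$.

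For $d=2$ with bounded $f$ there is an easy fix: skip the integration by parts altogether and estimate $\left|\int_\Lambda\langle f(u),\nabla w\rangle\, w\,\d x\right|\le 2\|f\|_\infty\|w\|_V\|w\|_H$ directly. For $d=1$ with merely Lipschitz $f$ this does not suffice, and the paper uses a different device: take antiderivatives $F_i,G_i$ with $F_i'=f_i$, $G_i'=F_i$, $F_i(0)=G_i(0)=0$, and observe that
\[
\int_\Lambda f_i(w)\,(\partial_i w)\, w\,\d x \;=\;\int_\Lambda \partial_i\bigl(F_i(w)\bigr)\, w\,\d x\;=\;-\int_\Lambda F_i(w)\,\partial_i w\,\d x\;=\;-\int_\Lambda \partial_i\bigl(G_i(w)\bigr)\,\d x\;=\;0
\]
by the divergence theorem (boundary terms vanish since $w\in W_0^{1,2}$ and $G_i(0)=0$). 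Subtracting this vanishing quantity from your first term replaces $f(u)$ by $f(u)-f(u-v)$, and now $|f(u)-f(u-v)|\le\text{Lip}(f)\,|v|$ gives the correct dependence on $v$ (via $\|v\|_{L^\infty}\le C\|v\|_V$ in $d=1$). This antiderivative cancellation, rather than your divergence-of-$f(u)$ computation, is the missing idea.
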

\begin{proof} The proof can be found in \cite{R13}, we include it here for the reader's convenience.

(1): We have for all $u,v\in V$
\begin{equation*}
\int_\Lambda \left| \langle f(u),\nabla u\rangle \right| |v| \d x \le \int_\Lambda \left(|f(0)| + \text{Lip}(f)|u|\right)|\nabla u||v| \d x.
\end{equation*}
To prove the first assertion, we note that for all $u,v,w\in V$
\begin{equation*}
\int_\Lambda |u||\nabla w||v| \d x \le \|uv\|_{L^2} \|w\|_V,
\end{equation*}
and by the generalized H\"older inequality the right hand side is dominated by
\begin{enumerate}
\item[(a)] $\|u\|_{L^2} \|v\|_{L^\infty} \|w\|_V,$
\item[(b)] $\|u\|_{L^4} \|v\|_{L^4} \|w\|_V,$
\item[(c)] $\|u\|_{L^d} \|v\|_{L^{\frac{2d}{d-2}}} \|w\|_V.$
\end{enumerate}
In the case $d=1, \; W^{1,2}_0(\Lambda) \subset L^\infty (\Lambda)$ continuously. Hence assertion (1) follows from (a) if $d=1$.

\noindent In the case $d=2,\; W^{1,2}_0(\Lambda) \subset L^p (\Lambda)$ continuously for all $p\in [1,\infty[$. Hence assertion (1)
follows from (b) if $d=2$.

\noindent In the case $d\le 3, W^{1,2}_0(\Lambda) \subset L^{\frac{2d}{d-2}} (\Lambda)$ continuously,
and $L^{\frac{2d}{d-2}} (\Lambda) \subset L^d (\Lambda)$ continuously if $d\le 4$. Hence assertion (1) follows from (c) if $d=3$ or $4$.

\noindent To prove the last part of the assertion we note that this is trivially true if $f$ is bounded.
If $d=1$ and if $f$ is merely Lipschitz continuous it follows immediately from (a).
\vspace*{10pt}

\noindent (2): We have 
\begin{equation*}
 \begin{split}
\label{e3.1}
& ~~~~ { }_{V^*}\<A(u)-A(v), u-v\>_V \\
 &= - \|u-v\|_V^2+ \sum_{i=1}^d \int_\Lambda \left(f_i(u)D_i u-f_i(v)D_i v\right)\left(u-v\right) \d  x.
\end{split}
\end{equation*}
To estimate the second term on the right hand side,
let $F_i :\R\to \R$ be such that $F_i(0)=0$ and $F_i'=f_i$ and $G_i : \R\to \R$ be such that $G_i(0)=0$ and $G_i'=F_i$.

Then
\begin{align*}
  & \int_\Lambda (f_i(u) D_i u-f_i (v) D_i v)(u-v) \d x\\
= & \int_\Lambda (f_i(u) D_i(u-v) +(f_i(u)-f_i(v))D_i v)\;(u-v) \d x\\
- & \int_\Lambda f_i(u-v) D_i(u-v) (u-v) \d x\\
+ & \int_\Lambda D_i\left(F_i(u-v)\right)(u-v) \d x,
\end{align*}
where integrating by parts and using that $u-v\in W^{1,2}_0 (\Lambda)$ we see that the last term on the right hand side is equal to
$$-\int_\Lambda D_i \left(G_i(u-v)\right) \d x,$$
which in turn after summation from $i=1$ to $d$ by Gauss's divergence theorem is zero,
since $G_i(u-v)=0$ on $\partial\Lambda$ for all $1\le i\le d$, because $u,v \in W^{1,2}_0(\Lambda)$.

Hence altogether we obtain
\begin{align}\label{5.1.14}
\nonumber & { }_{V^*}\<A(u)-A(v), u-v\>_V \\
 \le &-\|u-v\|^2_V + \int_\Lambda \<f(u)-f(u-v),\; \nabla(u-v)\>(u-v)d x\\
 \nonumber
& + \int_\Lambda \<f(u)-f(v),\;\nabla v\>(u-v)\d x
\end{align}
Now let us first consider the case $d=1$. Then using that $f$ is Lipschitz and applying Cauchy-Schwarz's and Young's inequalities we estimate,
the right hand side of \eqref{5.1.14} by
\begin{align}\label{5.1.14a}
-   & \|u-v\|_V^2 + \text{Lip}(f)\left(\|u-v\|_V \|v\|_{L^\infty} \|u-v\|_{L^2} + \|v\|_V \|u-v\|_{L^4}^2\right)\nonumber\\
\le -\frac34 & \|u-v\|_V^2 + C\left(\|v\|^2_V \|u-v\|_{L^2}^2 + \|v\|_V \|u-v\|_{L^4}^2 \right),
\end{align}
where $C\in (0,\infty)$ is independent of $u,v$ and we used that $W_0^{1,2}(\Lambda)\subset L^\infty(\Lambda)$ continuously, since $d=1$.

In the case $d=2$ and $f$ is bounded, we similarly obtain that the right hand side of \eqref{5.1.14} is dominated by
\begin{align}\label{5.1.14b}
-   & \|u-v\|_V^2 + 2\|f\|_{L^\infty} \|u-v\|_V \|u-v\|_{L^2} + \text{Lip}(f) \|v\|_V \|u-v\|_{L^4}^2\nonumber\\
\le -\frac{3}{4} & \|u-v\|_V^2 + C\left(\|u-v\|_{L^2}^2 + \|v\|_V \|u-v\|_{L^4}^2 \right),
\end{align}
where $C\in(0,\infty)$ is independent of $u,v$.

Hence combining (\ref{2d}) with \eqref{5.1.14a}, \eqref{5.1.14b}
and using Young's inequality we deduce that for some $C\in (0,\infty)$ 
$$ { }_{V^*}\<A(u)-A(v), u-v\>_V \le -\frac{1}{2} \|u-v\|_V^2+  \left(C  + C\|v\|_V^2  \right)\|u-v\|_H^2 \text{ for all } u,v\in V,$$
and assertion (2) is proved.
\vspace*{10pt}

\noindent $(3):$ In this case $A$ is a linear operator  and the assertion follows easily by the similar argument as in (2) (cf. also \cite{Liu+Roc}).
\end{proof}

For all examples presented in the remainder of  this section, we will only state the result on the existence and uniqueness of solutions. But we should remark that
one can also obtain those regularity estimates (\ref{e0}) and (\ref{e1}) by Theorem \ref{T1} if we do not have the large jumps term in
our equations (i.e. $g=0$).

\subsection{Semilinear type SPDEs}
\beg{exa}\label{exa-1}(Stochastic multidimensional Burgers type equations)
Let $\Lambda$ be an open bounded domain in $\mathbb{R}^d$ with smooth boundary. We consider
the following  semilinear stochastic  equation
 \begin{align}\begin{split}\label{rde}
 \d X_t=&\left(\Delta X_t+ \langle f(X_t),\nabla X_t\rangle+ f_0(X_t)\right)\d t+B(X_t)\d W_t\\
     &+\int_{D^c} h (X_{t-},z)\tilde{N}(\d t,\d z)+\int_D g( X_{t-},z)N(\d t, \d z) ;\\
X_0=& x.
\end{split}
  \end{align}

Suppose  the coefficients  satisfy
 the following conditions:

(i)  $f=(f_1, \cdots, f_d):\R\rightarrow\R^d$ is a  Lipschitz function;

(ii) $f_0$ is a continuous function  on $\mathbb{R}$ such that
\begin{equation}\label{c1}
\begin{split}
|f_0(x)| & \le C(\vert x \vert^r+1), \  x\in \mathbb{R};\\
 (f_0(x)-f_0(y))(x-y)&\le C(1+|y|^s)(x-y)^2, \  x,y\in \mathbb{R}.
\end{split}
     \end{equation}
where $C,r,s$ are some positive constants;

(iii) the function $B: W_0^{1,2}(\Lambda) \rightarrow \mathcal{T}_2(U; L^2(\Lambda))$ satisfies the following Lipschitz condition:
$$    \|B(v_1)-B(v_2)\|_{2}^2 \le  C \int_\Lambda |v_1-v_2|^2 \d x, \ v_1, v_2\in  W_0^{1,2}(\Lambda)  .    $$

(iv) $h, g: \mathbb{R}\times Z\rightarrow \mathbb{R}$ such that for all $v,v_1,v_2\in W^{1,2}_0(\Lambda)$,
\begin{align}\begin{split}\label{condition 1}
 &   \int_{D^c}\int_\Lambda |h(v_1,z)-h(v_2,z)|^2 \d x   \nu(\d z)
\le  C   \int_\Lambda |v_1-v_2|^2 \d x;  \\
&  \int_{D^c}\int_\Lambda |h(v,z)|^2 \d x  \nu(\d z)
\le C(1+\int_\Lambda |v|^2 \d x); \\
& \int_{D^c} \left(\int_\Lambda |h(v,z)|^2 \d x \right)^{3} \nu(\d z)
\leq C \left(1+ \left(\int_\Lambda |v|^2 \d x\right )^{3} \right).
\end{split}
\end{align}

Then we have the following result:

(1) If $d=1,r=3,s=2$, then for
 any $x \in L^{6}(\Omega, \mathcal{F}_0,\mathbb{P};H)$,
    $(\ref{rde})$
    has a unique solution $\{X_t\}_{t\in [0,T]}$.

(2) If $d=2,r=\frac{7}{3},s=2$ and each $f_i$ is bounded, then for
 any $x\in L^{6}(\Omega, \mathcal{F}_0,\mathbb{P};H)$,
    $(\ref{rde})$
    has a unique solution $\{X_t\}_{t\in [0,T]}$.

(3) If $d= 3,r=\frac{7}{3},s=\frac{4}{3}$ and each $f_i$ is
bounded measurable function which is independent of $X_t$,
 then for
 any $x\in L^{6}(\Omega, \mathcal{F}_0,\mathbb{P};H)$,
    $(\ref{rde})$
    has a unique solution $\{X_t\}_{t\in [0,T]}$.
\end{exa}

\begin{proof}
We consider
the following Gelfand triple
$$ V:=W^{1,2}_0(\Lambda) \subseteq H:=L^2(\Lambda)\subseteq (W^{1,2}_0(\Lambda))^*=V^*$$
and define the operator
$$ A(u)=\Delta u+ \langle f(u), \nabla  u\rangle+ f_0(u), \ u\in V.  $$
By Lemma \ref{L3.1}, one can show that $A, B$ satisfies $(H1)$-$(H4)$ with $\alpha=2, \beta=4$ (see \cite[Example 3.2]{Liu+Roc}).

Moreover, it is easy to show that  $h$ also satisfies the required conditions (i.e. $(H2)$, \eqref{c3} and \eqref{c4}) by (\ref{condition 1}).

Then  all assertions follow from Theorem \ref{T1}.
\end{proof}

\begin{rem}
(1) If $d=1$ and $f(x)=x$, Theorem \ref{T1}
can be applied to
 classical  stochastic Burgers equation (i.e. (\ref{rde}) with $f_0\equiv0$).
 Therefore, the above example improves the main result in
\cite{DX1} (Theorem 2.2) in the sense that  we allow  the coefficient $B$ in front of Wiener noise
 to be non-additive type.
 Another improvement is that we also allow a polynomial  perturbation term $f_0$ in
the drift of (\ref{rde}).
For example,  one can take $f_0(x)=-x^3+c_1x^2+c_2x~ (c_1,c_2\in \R)$ and show that \eqref{c1} holds.
 Hence
(\ref{rde}) also covers some stochastic reaction-diffusion type equations driven
by certain type of a L\'{e}vy noise (cf. \cite{Brz+Haus_2009}).

(2)  If $Z=\R^d$, $D^c=\{z\in\R^d:  |z|\le 1  \}$ and $\nu$ is a L\'{e}vy measure on $\R^d$,
then one simple sufficient condition for  $h$ satisfying (\ref{condition 1}) is to assume
\begin{align*}
 &  |h(x,z)-h(y,z)|  \le C |x-y|  |z|, \ x,y\in \R, \ z\in D^c;       \\
  &  |h(x,z)|  \le C (1+\vert x \vert)  |z|, \ x,y\in \R, \ z\in D^c.
\end{align*}

(3) One should note that in the Example \ref{exa-1},
$B$ is assumed to be Lipschitz from $W^{1,2}_0(\Lambda)$ (w.r.t. $\|\cdot\|_H$)
to $\mathcal{T}_2(U; L^2(\Lambda))$ only for simplicity.
Actually, the Lipschitz condition on $B$ can even be weakened to the requirement
$$    \|B(v_1)-B(v_2)\|_{2}^2 \le \|v_1-v_2\|_{V}^2  +\left(K+ K\|v_2\|_V^2\right)\|v_1-v_2\|_H^2.          $$
\end{rem}

\subsection{Quasi-linear type SPDEs}

Besides from the example of semilinear SPDE above,  we can also apply the main result to the following  quasi-linear SPDE  on
$\mathbb{R}^d\ (d\ge 3)$ driven by L\'{e}vy noise.

\begin{exa}(Stochastic $p$-Laplace equations) We consider
the following equation on $\mathbb{R}^d$ for $p> 2$
\begin{align}\begin{split}\label{p-Laplace}
 \d X_t=&\left(\sum_{i=1}^d D_i\left(|D_i X_t|^{p-2} D_i X_t  \right)+ f_0(X_t)\right)\d t+B(X_t)\d W_t\\
     &+\int_{D^c}f(X_{t-},z)\tilde{N}(\d t,\d z)+\int_D g( X_{t-},z)N(\d t, \d z) ;\\
X_0=& x.
\end{split}
  \end{align}

Suppose the following conditions hold:

(i) $f_0$ is a continuous function  on $\mathbb{R}$ such that
\begin{equation}
\begin{split}\label{c7}
 f_0(x)x  &\le C(\vert x \vert^{\frac{p}{2}+1}+1), \  x\in \mathbb{R};\\
|f_0(x)| & \le C(\vert x \vert^{r}+1), \  x\in \mathbb{R};\\
(f_0(x)-f_0(y))(x-y)&\le C(1+|y|^t)|x-y|^{s}, \  x,y\in \mathbb{R},
\end{split}
\end{equation}
where  $C>0$ and $r,s,t\ge 1$ are some constants.

(ii) $B: W^{1,p}_0(\Lambda)\rightarrow \mathcal{T}_2(U; L^2(\Lambda))$  satisfies the following condition:
$$    \|B(v_1)-B(v_2)\|_{2}^2 \le  C \int_\Lambda |v_1-v_2|^2 \d x, \ v_1, v_2\in  W_0^{1,p}(\Lambda)  .    $$

(iv) $f, g: \mathbb{R}\times Z\rightarrow \mathbb{R}$ such that for all $v,v_1,v_2\in W^{1,p}_0(\Lambda)$,
\begin{align}\begin{split}\label{condition 11}
 &   \int_{D^c}\int_\Lambda |f(v_1,z)-f(v_2,z)|^2 \d x   \nu(\d z)
 \le  C   \int_\Lambda |v_1-v_2|^2 \d x ;\\
&  \int_{D^c}\int_\Lambda |f(v,z)|^2 \d x  \nu(\d z)
\le C(1+\int_\Lambda |v|^2 \d x); \\
& \int_{D^c} \left(\int_\Lambda |f(v,z)|^2 \d x \right)^{3} \nu(\d z)
\leq C \left(1+ \left(\int_\Lambda |v|^2 \d x\right )^{3} \right).
\end{split} \end{align}

Then we have

(1) if $d<p$, $s=2$, $r= p+1$ and $t\le p$, then for any $x\in  L^{6}(\Omega, \mathcal{F}_0,\mathbb{P}; H)$,  $(\ref{p-Laplace})$ has a unique solution.

(2) if $d>p$, $2<s<p$, $r=\frac{2p}{d}+p-1$ and $t\le  \min\left\{\frac{p^2(s-2)}{(d-p)(p-2)},  \frac{p(p-s)}{p-2}\right\} $,
for any $x\in  L^{6}(\Omega, \mathcal{F}_0,\mathbb{P}; H)$
  $(\ref{p-Laplace})$ has a unique solution.
\end{exa}

\begin{proof}
(1) We consider
the following Gelfand triple ($q:=\frac{p}{p-1}$)
$$ V:=W_0^{1,p}(\Lambda)\subseteq H:=L^2(\Lambda) \subseteq  W^{-1,q}(\Lambda)=V^*. $$
 It is well known that $\sum_{i=1}^d D_i\left(|D_iu|^{p-2} D_i u\right) $
satisfy $(H1)$-$(H4)$ with $\alpha=p$ (cf. \cite{L08b}).  In
particular,
 there exists a constant $\delta>0$ such that
\begin{equation}\label{e7}
 \sum_{i=1}^d {~}_{V^*}\<  D_i\left(|D_iu|^{p-2} D_i u\right)-  D_i\left(|D_iv|^{p-2} D_i v\right)  , u-v  \>_V
\le - \delta \|u-v\|_V^p, \ u,v\in  W_0^{1,p}(\Lambda).
\end{equation}

Recall that  for $d<p$ we have the following Sobolev embedding
$$      W_0^{1,p}(\Lambda) \subseteq   L^{\infty}(\Lambda).   $$
Hence by (\ref{c7}) we have
\begin{equation}
\begin{split}
  {~}_{V^*}\<f_0(u)-f_0(v),u-v\>_V & \le C \int_\Lambda \left(1+ |v|^t \right) |u-v|^2 d x\\
  &\le C \left( 1+ \|v\|_{L^{\infty}}^t   \right) \|u-v\|^2_{L^{2}}\\
  &\le C \left( 1+ \|v\|_{V}^t   \right) \|u-v\|_{H}^{2}, \ u,v\in V,
\end{split}
\end{equation}
where $C$ is a constant may change from line to line.

Hence $(H2)$ holds with $\rho(v)=C\|v\|_V^t$.

Note that from (\ref{c7}) we have
\begin{equation}\label{e51}
\begin{split}
{~}_{V^*}\<f_0(u), u\>_V & \le C \int_\Lambda (1+ |u|^{\frac{p}{2}+1})dx\\
       & \le C\left(1+\|u\|_{L^\infty}^{p/2}\|u\|_H\right)\\
   & \le \frac{\delta}{2} \|u\|_V^p + C\left(1+\|u\|_H^2\right), \ u\in V.
\end{split}
\end{equation}
Therefore, \eqref{e51} together with \eqref{e7} verify $(H3)$ with $\alpha=p$.

$(H4)$ with $\beta=4$ (in fact one may take $\beta=\frac{2p}{p-1}<4$)  follows from the following estimate:
$$  \|f_0(u)\|_{V^*} \le C\left(1+\|u\|_{L^{p+1}}^{p+1}\right)\le C\left(1+\|u\|_{L^{\infty}}^{p-1}\|u\|_{H}^2\right)
\le  C\left(1+\|u\|_{V}^{p-1}\|u\|_{H}^2\right)
, \ u\in V. $$

Then combining with (\ref{condition 11}) we know that  the assertions follow from Theorem \ref{T1}.

(2)
Note that  for $d>p$ we have the following Sobolev embedding
$$      W_0^{1,p}(\Lambda) \subseteq   L^{p_0}(\Lambda), \ p_0=\frac{dp}{d-p}.   $$
Let $t_0=\frac{p(s-2)}{s(p-2)}\in(0,1)$ and $p_1\in(2,p_0)$ such that
$$     \frac{1}{p_1}=\frac{1-t_0}{2}+ \frac{t_0}{p_0}.    $$
Then we have the following interpolation inequality:
$$   \|u\|_{L^{p_1}}\le   \|u\|_{L^{2}}^{1-t_0}   \|u\|_{L^{p_0}}^{t_0}, \ u\in W_0^{1,p}(\Lambda).   $$
Since $2<s<p$, it is easy to show that $s<p_1$.

Let $p_2=\frac{p_1}{p_1-s}$, then by  (\ref{c7}) we have
\begin{equation}\label{e8}
\begin{split}
  {~}_{V^*}\<f_0(u)-f_0(v),u-v\>_V & \le C \int_\Lambda \left(1+ |v|^t \right) |u-v|^s d x\\
  &\le C \left( 1 +\|v\|_{L^{tp_2}}^t   \right) \|u-v\|^s_{L^{p_1}}\\
  &\le C \left( 1+\|v\|_{L^{tp_2}}^t   \right) \|u-v\|_{L^{2}}^{s(1-t_0)}   \|u-v\|_{L^{p_0}}^{st_0}\\
&\le \varepsilon \|u-v\|_{L^{p_0}}^{p} +
C_\varepsilon \left( 1+\|v\|_{L^{tp_2}}^{tb}   \right)   \|u-v\|_{L^{2}}^{2}
,
\end{split}
\end{equation}
where $\varepsilon, C_\varepsilon$ are some constants and the last step follows from the following Young inequality
$$  xy\le \varepsilon x^a +C_\varepsilon y^b, \ x,y\in\mathbb{R},\  a=\frac{p-2}{s-2},\ b=\frac{p-2}{p-s}. $$
With some calculations, one have
$$  \frac{s}{p_1}=\frac{p-s}{p-2}+\frac{p(s-2)}{p_0(p-2)},\ p_2=\frac{p_0(p-2)}{(p_0-p)(s-2)}. $$
Hence if $t\le \frac{(p_0-p)(s-2)}{p-2}$, then
$$ \|u\|_{L^{tp_2}}\le C \|u\|_{L^{p_0}} \le C  \|u\|_{V}, \ v\in V.   $$
Therefore, $(H2)$ follows from (\ref{e7}) and (\ref{e8}).

$(H3)$ can be verified for $\alpha=p$ in a similar manner.

For $r=\frac{2p}{d}+p-1$, by the interpolation inequality we have
$$\|f_0(u)\|_{V^*}\le C\left(1+ \|u\|_{L^{rp_0^\prime}}^r \right)
\le C\left( 1+\|u\|_{p_0}^{p-1}\|u\|_H^{\theta} \right), \ u\in V, $$
where
$$ \frac{1}{p_0}+\frac{1}{p_0^\prime}=1,\ \ \theta=\frac{2p}{d}.  $$
Therefore, $(H4)$ also holds with $\beta=4$.

Then all assertions  follow from Theorem \ref{T1}.
\end{proof}

\begin{rem}
One further generalization is to replace  $\sum_{i=1}^d
D_i\left(|D_iu|^{p-2} D_i u\right)$ by more general quasi-linear
differential operator
$$ \sum_{|\alpha|\le m} (-1)^{|\alpha|}D_\alpha A_\alpha(x,Du(x,t);t),  $$
where $Du=(D_\beta u)_{|\beta|\le m}$. Under certain assumptions (cf. e.g.\cite[Proposition 30.10]{Z90}) this operator
also satisfies the monotonicity and coercivity conditions.
Then by a similar argument,
according to Theorem \ref{T1}, we can  obtain the existence and
uniqueness of solutions to this type of quasi-linear SPDE driven by L\'{e}vy noise.
\end{rem}

\subsection{Stochastic hydrodynamical systems}
The next example is  the  stochastic 2D Navier-Stokes equation driven by L\'{e}vy noise (cf. \cite{BCF4,Flandoli_G_1995,MS02,Liu+Roc}
for Wiener noise case).
The classical Navier-Stokes equation is a very important model in fluid mechanics to describe the time evolution
of incompressible fluids, it can be formulated as follows (2D case):
\begin{equation*}
 \begin{split}
 & \partial_t u(t)=\nu \Delta u(t)- \left( u(t) \cdot \nabla \right) u(t) -\nabla p(t)+f(t), \\
& \nabla \cdot u(t)=0,
 \end{split}
\end{equation*}
where  $u(t, x) = (u^1(t, x), u^2(t, x))$  represents the velocity field, $\nu$ is the
viscosity constant (we keep the standard notation and it should cause no confusion with
the measure $\nu$ corresponding to the L\'{e}vy process),  $p(t, x)$ denotes the pressure and $f$ is an external force field acting
on the fluid.

Let $\Lambda$ be a bounded domain in $\mathbb{R}^2$ with smooth boundary. Define
$$ V=\left\{ v\in W_0^{1,2}(\Lambda,\mathbb{R}^2): \nabla \cdot v=0 \  a.e.\  \text{in} \ \Lambda   \right\}, \
\|v\|_V:=\left(\int_\Lambda |\nabla v|^2 dx  \right)^{1/2},
$$
and $H$ is the closure of $V$ in the following norm
$$ \|v\|_H:=\left(\int_\Lambda | v|^2 dx  \right)^{1/2}.$$
The linear operator $P_H$ (the Helmholtz-Leray projection) and $A$ (Stokes operator with viscosity constant
$\nu$) are defined by
$$ P_H: L^2(\Lambda, \mathbb{R}^2)\rightarrow H\ \  \text{ orthogonal projection}; $$
$$  A: W^{2,2}(\Lambda, \mathbb{R}^2)\cap V\rightarrow H, \ Au=\nu P_H \Delta u .  $$
It is well known that  the Navier-Stokes equation can be
reformulated as follows:
\begin{equation}\label{NSE}
u'=Au+F(u)+f_0,\ u(0)=u_0\in H,
\end{equation}
where $f_0\in L^2(0,T;V^*)$ denotes some external force and
$$ F:  \mathcal{D}_F\subset H\times V\rightarrow H, \ F(u,v)=- P_H\left[\left(u \cdot \nabla\right) v\right],
F(u)=F(u,u).  $$
It is standard that in the framework of  the Gelfand triple
$$     V\subseteq H\equiv H^*\subseteq V^*,   $$
one can show that the following mappings
$$ A: V\rightarrow V^*, \  F: V\times V\rightarrow V^*  $$
are well defined. In particular, we have
$$ { }_{V^*}\<F(u,v),w\>_V=-{ }_{V^*}\<F(u,w),v\>_V, \   { }_{V^*}\<F(u,v),v\>_V=0,\  u,v,w\in V. $$
Now we consider the stochastic 2D Navier-Stokes equation driven by L\'{e}vy noise:
\begin{equation}\begin{split}\label{SNSE}
\d X_t=&\left(AX_t+F(X_t)+f_0(t)\right)\d t+ B(X_t)\d W_t\\
         &~~+\int_{D^c}f(X_{t-},z)\tilde{N}(\d t,\d z)+\int_D g(X_{t-},z)N(\d t, \d z); \\
X_0=& x.
\end{split}
\end{equation}

\begin{exa}(Stochastic 2D Navier-Stokes equation)
Suppose that
$B:V\rightarrow \mathcal{T}_2(U; H)$  and $f,g: \mathbb{R}\times Z\rightarrow \mathbb{R}$
satisfy the following conditions:
\begin{align}\begin{split}\label{NS}
 &  \|B(v_1)-B(v_2)\|_2^2+  \int_{D^c}\|f(v_1,z)-f(v_2,z)\|^2_H\nu(\d z)  \le  C \|v_1-v_2\|_H^2;\\
&  \int_{D^c}\|f(v,z)\|_H^2\nu(\d z)  \le C(1+\|v\|_H^2); \\
& \int_{D^c}   \|f(v,z)\|^4_H\nu(\d z)\leq C (1+\|v\|^4_H),
\end{split}
 \end{align}
 where $C$ is some constant.

 Then for any $x \in L^{4}(\Omega, \mathcal{F}_0,\mathbb{P}; H)$,
    $(\ref{SNSE})$
    has a unique solution $\{X_t\}_{t\in [0,T]}$.
\end{exa}

\begin{proof} The hemicontinuity $(H1)$ is obvious since $A$ is linear and $F$ is bilinear.

Note that $ { }_{V^*}\<F(v),v\>_V=0$, it is also easy to show that $(H3)$ holds
with $\alpha=2$:
\begin{equation*}\begin{split}
 { }_{V^*}\<Av+F(v)+f_0(t),v\>_V &\le -\nu\|v\|_V^2+\|f_0(t)\|_{V^*}\|v\|_V \le
  -\frac{\nu}{2}\|v\|_V^2+C\|f_0(t)\|_{V^*}^2, \ v\in V,\\
  \|B(v)\|_2^2 &\le 2K\|v\|_H^2+2\|B(0)\|_2^2, \ v\in V.
\end{split}
\end{equation*}
 Recall the following estimates (cf.
e.g.\cite[Lemmas 2.1, 2.2]{MS02})
\begin{equation}\label{e2}
 \begin{split}
  |{ }_{V^*}\<F(w),v\>_V| &\le 2 \|w\|_{L^4(\Lambda;\mathbb{R}^2)}\|v\|_V; \\
  |{ }_{V^*}\<F(w),v\>_V| &\le 2  \|w\|_V^{3/2} \|w\|_H^{1/2}  \|v\|_{L^4(\Lambda;\mathbb{R}^2)}, v,w\in V. \\
 \end{split}
\end{equation}
Then we have
\begin{equation}
 \begin{split}
  { }_{V^*}\<F(u)-F(v),u-v\>_V &=-  { }_{V^*}\<F(u,u-v),v\>_V+  { }_{V^*}\<F(v,u-v),v\>_V \\
 &= -  { }_{V^*}\<F(u-v),v\>_V \\
 &\le 2  \|u-v\|_V^{3/2} \|u-v\|_H^{1/2}  \|v\|_{L^4(\Lambda;\mathbb{R}^2)} \\
& \le \frac{\nu}{2} \|u-v\|_V^{2} + \frac{32}{\nu^3}  \|v\|_{L^4(\Lambda;\mathbb{R}^2)}^4 \|u-v\|_H^{2},
\ u,v\in V.
 \end{split}
\end{equation}
Hence we have the local monotonicity:
$$  { }_{V^*}\<Au+F(u)-Av-F(v),u-v\>_V
\le -\frac{\nu}{2} \|u-v\|_V^{2} + \frac{32}{\nu^3}  \|v\|_{L^4(\Lambda;\mathbb{R}^2)}^4 \|u-v\|_H^{2}.  $$
Combining with (\ref{NS}) we know that $(H2)$ holds  with $\rho(v)=C
\|v\|_{L^4(\Lambda;\mathbb{R}^2)}^4 $.

(\ref{e2}) and (\ref{2d}) imply that  $(H4)$ holds
with $\beta=2$.

Then it is easy to see that  the existence and uniqueness of solutions to (\ref{SNSE})
follows from Theorem \ref{T1}.
\end{proof}

\begin{rem}\label{rem4.1}
As we mentioned in the introduction, besides the stochastic 2D Navier-Stokes equation,
 many other hydrodynamical systems also satisfy the local monotonicity condition $(H2)$ and coercivity condition $(H3)$.
For example, in a recent work of Chueshov and Millet \cite{CM10}, they have studied the well-posedness and large deviation principle for an abstract stochastic
semilinear equation (driven by Wiener noise) which covers a wide class of fluid dynamical models. In fact, the Condition (C1) and (C2) in \cite{CM10}
 implies that the assumptions in
Theorem \ref{T1} hold.  More precisely,   (2.2) in \cite{CM10} implies the coercivity  $(H3)$ holds, and the local monotonicity  $(H2)$
follows   from (2.4) (or (2.8)) in \cite{CM10}.  Other assumptions in Theorem \ref{T1} can be also verified easily.

Therefore, Theorem \ref{T1} can be applied to show the well-posedness of all hydrodynamical models in \cite{CM10} driven by general L\'{e}vy noise instead of Wiener noise,
e.g.  stochastic
magneto-hydrodynamic equations,  stochastic Boussinesq model for
the B\'{e}nard convection, stochastic 2D magnetic B\'{e}nard problem and  stochastic 3D Leray-$\alpha$ model driven by L\'{e}vy noise.
\end{rem}

\subsection{Stochastic power law fluids}

The next example of SPDE is a model which describes the velocity field of a viscous and
 incompressible non-Newtonian fluid subject to some random forcing.
The deterministic model has been studied intensively in PDE theory (cf.\cite{FR,MNRR} and the references therein).
Let $\Lambda$ be a  bounded domain in $\mathbb{R}^d~ (d\ge 2)$  with  smooth boundary.
For a vector field    $u: \Lambda\rightarrow \R^d$,
we denote the rate of strain tensor by
$$ e(u): \Lambda\rightarrow \R^d\otimes \R^d; \   e_{i,j} (u)=\frac{\partial_i u_j+ \partial_j u_i}{2},
\ i,j=1,\cdots, d.  $$
Now we consider the case  that the extra stress tensor has the following polynomial form:
$$ \tau(u): \Lambda \rightarrow\R^d\otimes\R^d; \ \tau(u)=2\nu (1+|e(u)|)^{p-2} e(u),   $$
where   $\nu>0$ is the kinematic viscosity and $p>1$ is some constant.

In the case of deterministic forcing,  the dynamics of  power law fluids can be modeled   by the following PDE (cf.\cite[Chapter 5]{MNRR}):
\begin{equation}\label{power law fluids}
 \begin{split}
& \partial_t u= \text{div} \left(\tau(u) \right)- (u\cdot \nabla)u-\nabla p +f , \\
&  \text{div}  (u)=0, \  u|_{\partial\Lambda}=0,     ~ u(0)=u_0,
 \end{split}
\end{equation}
where $u=u(t,x)=\left( u_i(t,x) \right)_{i=1}^d$ is the velocity field, $p$ is the pressure,
$f$ is some external force and
$$ u\cdot \nabla=\sum_{j=1}^d u_j \partial_j, \  \
 \text{div}\left( \tau (u) \right)= \left( \sum_{j=1}^d \partial_j \tau_{i,j} (u)  \right)_{i=1}^d.  $$
\begin{rem}
(1) Note that $p=2$ describes the Newtonian fluids and $(\ref{power law fluids})$ reduces to  the classical
Navier-Stokes equation.

(2) The shear shining fluids (i.e. $p\in(1,2)$) and the shear thickening fluids (i.e.  $p\in (2, \infty)$) has been also widely
studied in different fields of science and engineering (cf. \cite{FR,MNRR}).
\end{rem}

Now we consider the following Gelfand triple
$$ V\subset H \subset V^*,   $$
where
\begin{align*}
 V&=\left\{ u\in W_0^{1,p}(\Lambda; \R^d):\  \nabla\cdot u=0 \  a.e. \  \text{in} \ \Lambda    \right\}; \\
  H&=\left\{ u\in L^2(\Lambda; \R^d):\   \nabla\cdot u=0 \ a.e. \  \text{in} \ \Lambda, \ u\cdot n=0 \
  \text{on} \ \partial\Lambda     \right\}.
\end{align*}
Let $P_H$ be the orthogonal (Helmhotz-Leray) projection from
 $L^2(\Lambda,\R^d)$ to $H$. Similarly as in the previous example, we can show that
 the following operators
$$  A: W^{2,p}(\Lambda; \R^d)\cap V \rightarrow H, \ A(u):= P_H \left[  \text{div}( \tau( u ) ) \right] ; $$
$$    F: W^{2,p}(\Lambda; \R^d)\cap V \times W^{2,p}(\Lambda; \R^d)\cap V  \rightarrow H; \  F(u, v):=- P_H \left[ (u\cdot \nabla) v   \right],  \ F(u):=F(u,u)  $$
can be extended to the   well defined operators:
$$     A: V\rightarrow V^*; \ F: V\times V\rightarrow V^*.              $$
In particular, one can show that
$$     { }_{V^*}\<A(u),  v\>_V = -  \int_\Lambda \sum_{i,j=1}^d  \tau_{i,j}(u) e_{i,j}(v) \d x, \  u,v\in V;     $$
$$       { }_{V^*}\<F(u,v),  w\>_V=  -   { }_{V^*}\<F(u,w),  v\>_V, \   { }_{V^*}\<F(u,v),  v\>_V=0, \  u,v,w\in V.         $$

Now we consider  stochastic  equation of power law fluids driven by L\'{e}vy noise:
\begin{equation}\begin{split}\label{PLF}
\d X_t=&\left(AX_t+F(X_t)+ f_0(t)\right)\d t+ B(X_t)\d W_t\\
         &~~+\int_{D^c}f(X_{t-},z)\tilde{N}(\d t,\d z)+\int_D g(X_{t-},z)N(\d t, \d z); \\
X_0=& x,
\end{split}
\end{equation}
 where $f_0:=P_H f$.

\begin{exa}(Stochastic  equation of power law fluids)
Suppose that $f_0\in L^2([0,T]; H)$,
$B:V\rightarrow \mathcal{T}_2(U; H)$  and $f,g: \mathbb{R}\times Z\rightarrow \mathbb{R}$
satisfy the following conditions:
\begin{align}\begin{split}\label{}
 &  \|B(v_1)-B(v_2)\|_2^2+  \int_{D^c}\|f(v_1,z)-f(v_2,z)\|^2_H\nu(\d z)  \le  C \|v_1-v_2\|_H^2;\\
&  \int_{D^c}\|f(v,z)\|_H^2\nu(\d z)  \le C(1+\|v\|_H^2); \\
& \int_{D^c}   \|f(v,z)\|^4_H\nu(\d z)\leq C (1+\|v\|^4_H),
\end{split}
 \end{align}
 where $C$ is some constant.

 Then if  $p\ge \frac{d+2}{2}$, for any $x \in L^{4}(\Omega, \mathcal{F}_0,\mathbb{P}; H)$
    $(\ref{PLF})$
    has a unique solution $\{X_t\}_{t\in [0,T]}$.
\end{exa}

\begin{proof} Without loss of generality we may assume the viscosity constant $\nu=1$.

We first recall the well known Korn's inequality for $p\in (1, \infty)$ (cf. \cite[Theorem 1.10 (pp.196)]{MNRR}):
$$  \int_\Lambda |e(u)|^p \d x \ge C_p \|u\|_{1,p}, \  u\in W_0^{1,p}(\Lambda; \R^d),  $$
where  $C_p>0$ is  some constant.

The following inequalities are also used very often in the study of power law fluids (cf. \cite[pp.198 Lemma 1.19]{MNRR}):
\begin{equation}\label{e11}
 \begin{split}
 & |\tau_{i,j} (u) | \le C(1+|e(u)|)^{p-1}, \ i,j=1, \cdots, d; \\
&  \sum_{i,j=1}^d \tau_{i,j}(u) e_{i,j} (u)\ge C(|e(u)|^p-1); \\
&   \sum_{i,j=1}^d  (\tau_{i,j} (u)-\tau_{i,j} (v))(e_{i,j}(u)- e_{i,j}(v))\ge C\left(|e(u)-e(v)|^2 +|e(u)-e(v)|^p  \right).
\end{split}
\end{equation}
Then by the interpolation inequality  and Young's inequality  one can show  that
\begin{equation*}
 \begin{split}
 & ~~ { }_{V^*}\<F(u)-F(v),u-v\>_V\\
 &= -  { }_{V^*}\<F(u-v), v\>_V \\
&=  { }_{V^*}\<F(u-v,v), u-v\>_V \\
&\le C   \|v\|_{V}  \|u-v\|_{L^{\frac{2p}{p-1}}}^2  \\
 &\le C  \|v\|_{V}  \|u-v\|_{1,2}^{\frac{d}{p}} \|u-v\|_H^{\frac{2p-d}{p}}  \\
& \le \varepsilon \|u-v\|_{1,2}^{2} + C_\varepsilon \|v\|_{V}^{\frac{2p}{2p-d}} \|u-v\|_H^{2},
\  u,v\in V.
 \end{split}
\end{equation*}
By (\ref{e11}) and Korn's inequality  we have
\begin{equation*}
 \begin{split}
 &  { }_{V^*}\<A(u)-A(v), u-v\>_V\\
=&- \int_\Lambda \sum_{i,j=1}^d
\left( \tau_{i,j}(u) -\tau_{i,j}(v)\right) \left( e_{i,j}(u)-e_{i,j}(v) \right) \d x\\
\le& -C\|e(u)-e(v)\|_H^2\\
\le& -C\|u-v\|_{ 1,2}^2.
 \end{split}
\end{equation*}
Hence we have the following estimate:
$$  { }_{V^*}\<A(u)+F(u)-A(v)-F(v), u-v\>_V
 \le -(C-\varepsilon) \|u-v\|_{ 1,2 }^{2} + C_\varepsilon \|v\|_{V}^{\frac{2p}{2p-d}} \|u-v\|_H^2, $$
 where $\varepsilon>0$ and $C_\varepsilon$ are some constants.

Hence  $(H2)$ holds with $\rho(v)=C_\varepsilon \|v\|_{V}^{\frac{2p}{2p-d}}$.

It is also easy to  verify $(H3)$ with $\alpha=p$ as follows:
$$  { }_{V^*}\<A(v)+F(v),  v\>_V\le -C_1\int_\Lambda |e(v)|^p \d x+ C_2\le -C_3\|v\|_V^p+C_2,   $$
where $C_1, C_2, C_3$ are some constants.

Note that
$$
 \left| { }_{V^*}\<F(v),u\>_V \right|
= \left|  { }_{V^*}\<F(v,u), v\>_V \right| \le   \|u\|_{V}  \|v\|_{L^{\frac{2p}{p-1}}}^2,
\ u,v\in V,
 $$
hence we have
$$ \|F(v)\|_{V^*}\le   \|v\|_{L^{\frac{2p}{p-1}}}^2, \ v\in V.$$
Then by the interpolation inequality and Sobolev's inequality we have
$$    \|v\|_{L^{\frac{2p}{p-1}}}\le \|v\|_{L^q}^\gamma \|v\|_{L^2}^{1-\gamma} \le C \|v\|_V^\gamma\|v\|_H^{1-\gamma},    $$
where $q=\frac{dp}{d-p}$ and $\gamma=\frac{d}{(d+2)p-2d}$.

Note that $2\gamma\le p-1$ if $p\ge  \frac{2+d}{2}$, and it is also easy to see that
$$ \|A(v)\|_{V^*} \le C (1+\|v\|_V^{p-1}), \ v\in V. $$
 Hence  the growth condition $(H4)$ also holds.

Moreover, note that $d\ge \frac{2+d}{2}$, it is easy to show that (\ref{c5}) also holds.

Therefore,   the existence and uniqueness of solutions to (\ref{PLF})
follows from Theorem \ref{T1}.
\end{proof}

\begin{rem} In \cite{TY11} the authors established the existence and uniqueness of weak solutions for $(\ref{PLF})$
with additive Wiener noise. They first  considered  the Galerkin approximation and  showed the tightness of the distributions
 of the corresponding approximating solutions.
Then they proved that the limit is a weak solution of $(\ref{PLF})$ with additive Wiener noise.

In \cite{LR13} the authors obtained the existence and uniqueness of strong solutions for $(\ref{PLF})$
with multiplicative Wiener noise.
Here by applying Theorem \ref{T1} we
establish the existence and uniqueness of strong solutions  to $(\ref{PLF})$ in $L^2$-space of divergence free vector fields
with multiplicative L\'{e}vy noise.
 \end{rem}


\end{document}